\newtheorem{Theorem}{Theorem}[section]
\newtheorem{Lemma}[Theorem]{Lemma}
\newtheorem{Proposition}[Theorem]{Proposition}
\theoremstyle{definition}
\newtheorem{Definition}[Theorem]{Definition}
\theoremstyle{remark}
\newtheorem{rem}[Theorem]{Remark}
\newtheorem{ex}[Theorem]{Example}
\numberwithin{equation}{section}
\newcommand{\norm}[1]{\left\Vert\right\Vert}
\newcommand{\abs}[1]{\left\vert\right\vert}
\newcommand{\set}[1]{\left\{\right\}}
\newcommand{\R}{\mathbb R}
\newcommand{\N}{\mathbb N}
\newcommand{\C}{\mathcal{C}}
\newcommand{\F}{\mathcal{F}}
\newcommand{\A}{\mathcal{A}}
\newcommand{\rel}{\mathcal{R}}
\newcommand{\D}{\mathcal{D}}
\newcommand{\p}{\mathcal{P}}
\newcommand{\mcc}{\mathcal{C}}
\newcommand{\Hol}{\mathcal{H}}
\def\pathloc{ \operatorname{Paths}_{\operatorname{loc}} }
\definecolor{brightube}{rgb}{0.8, 0.4, 0.9}
\definecolor{ballblue}{rgb}{0.33, 0.55, 0.9}
\begin{document}
	
	\title[Diffeologies in infinite dimensional geometry and optimization]{On diffeologies  from infinite dimensional geometry to {PDE constrained optimization}}%
	

	\author{Nico Goldammer}
		\address[Nico Goldammer]{Helmut-Schmidt-University / University of the Federal Armed Forces Hamburg, Holstenhofweg~85, 22043 Hamburg, Germany}%
		\email{goldammer@hsu-hh.de}%
	\author{Jean-Pierre Magnot}
		\address[Jean-Pierre Magnot]{LAREMA - UMR CNRS 6093, Universit\'e d'Angers, 2 Boulevard Lavoisier 
		49045 Angers cedex 01 and Lyc\'ee Jeanne dArc 30 avenue de Grande Bretagne
		F-63000 Clermont-Ferrand
		http://orcid.org/0000-0002-3959-3443}%
	\email{jean-pierr.magnot@ac-clermont.fr}%
	\author{Kathrin Welker}
		\address[Kathrin Welker]{TU Bergakademie Freiberg, Akademiestr. 6, D-09599 Freiberg, Germany}%
	\email{Kathrin.Welker@math.tu-freiberg.de}%

	

	\begin{abstract}
		We review how diffeologies complete the settings classically used from  infinite dimensional geometry to {}{partial differential equations,} based on classical settings of functional analysis and with classical mapping spaces as key examples.   
	 {}{As the classical examples of function spaces, we deal with manifolds of mappings in Sobolev classes (and describe the ILB setting), jet spaces and spaces of triangulations, that are key frameworks for the two fields of applications of diffeologies that we choose to highlight: evolution equations and integrable systems, and optimization problems constrained by partial differential equations.}  
		
	\end{abstract}
	\subjclass[2020]{ {46T05,58J60,58Z05}}

	\keywords{Diffeology, Fr\"olicher space, infinite dimensional geometry, partial differential equations, shape analysis}
	
	\maketitle

	\tableofcontents
	\section*{Introduction}
 Diffeological spaces and Fr\"olicher spaces are two frameworks, developed independently in the 20th century, in order to define smoothness on objects where basic differential properties cannot be stated in a standard way. {Motivated by technical uncapacities to formulate properly applied problems, many such settings developed independently, that are generalizations of the classical finite dimensional differential geometry, exist in the literature.} A non-exhaustive list of such settings can be found in e.g. \cite{St2011}. One global underlying problem is the following: 
 \vskip 6pt
	\centerline{\textit{{How to express variational problems in a setting where differentiation is not easy?}}}
	\vskip 6pt
	 One {has} immediately {in mind} spaces with singularities as elementary examples, such as the irrational torus or orbifolds where a rich differential geometry can be described by the use of diffeologies \cite{KIg}, but initialy the
	motivating (historical) examples for the development of diffeologies are groups of diffeomorphisms of non-compact manifolds and coadjoint orbits, in the works of French mathematician Jean-Marie Souriau (integrally written in French), reviewed and deeply expanded in \cite{Igdiff}. The same way, the motivations of the Swiss mathematician Alfred Fr\"olicher were certainly partly based on infinite dimensional considerations, till his book with Andreas Kriegl \cite{FK}. Therefore, motivated by historical considerations and by our own concerns, while most recent works in these two settings are developed in a background of algebraic or symplectic geometry (understood in a wide sense), we feel the need of an actualized presentation of diffeological spaces, and of Fr\"olicher spaces as a subcategory, based on infinite dimensional examples arising in functional analysis, infinite dimensional {geometry} 
	and numerical analysis. Indeed, since about {ten} years, under the impulsion of a new generation of motivated researchers, diffeologies have drastically increased their known technical capacities.
	 Examples and counter-examples have 
	 successfully {highlighted in} 
	 specific constructions that {point out} 
	 singular problems, 
	 drawing attention by non-specialists { to diffeologies} as a potential framework for some of their technical difficulties. This is to this class of potential readers that our text is primarily addressed. We are ourselves {primarily interested in} 
	 two very different backgrounds, both linked with functional analysis, and the use of diffeologies have imposed its necessity to us. 
	 {Indeed, after breakthrough in the theory of geometry and integrable systems \cite{Ma2013,MR2016},}
	  diffeological spaces are also {recently} considered in applied mathematics  (see, e.g., \cite{KW21}). {From the viewpoint of integrable systems, diffeologies enable to state smoothness, and hence well-posedness, on infinite dimensional (mostly algebraic) frameworks. From the viewpoint of shape optimization problems, diffeologies {}{propose} a more flexible framework than the one classically considered in the last decades, based on classical Riemannian infinite dimensional geometry.}

	One aim of this paper is to make a clear and understandable presentation of basic results on diffeologies and Frölicher spaces, based on the examples that we know better, but in a simple language, accessible to other researchers in the field of functional analysis and applied mathematics understood in a very wide sense. A further aim of this paper is to make a clear exposition of some basic concepts {where diffeologies are interrelated with topics} of functional analysis, {infinite dimensional geometry} and optimization for researchers whose basic knowledge remain e.g. in low dimensional topology or category theory, and who ignore such infinite dimensional settings. 
	
	The structure of the paper is as follows:
	\begin{itemize}
	\item A brief introduction in diffeological and Fr\"olicher spaces is given in section \ref{sec:DiffFroehl}. In particular, we present  and summarize the various tangent space definitions of diffeological spaces, which appear in the literature. {Other geometric objects generalized to diffeologies are also considered such as diffeological (Lie) groups, vector or principal (pseudo-) bundles, Riemannian metrics, groups of diffeomorphisms and automorphisms of various diffeological algebraic or set-theoric structures.}
		\item {Section \ref{diffeq} is concerned by (partial) differential equations in a two-fold way.  {We take profit of this exposition, in the two sections, to highlight or review on some tangent and cotangent structures that are well-known on (finite dimensional) manifolds and recovered in diffeological spaces with specific technical details.} In section \ref{s:reg}, we consider the problem of the existence of an exponential map in a diffeological Lie group, solving a first order differential equation, and as a by-product holonomy-type equations by stating an infinite diemensional Ambrose-Singer theorem.} 
		In section \ref{sec:Optimization} we present first order optimization techniques on diffeological spaces. Here, we concentrate on shape optimization, which has a wide range of application. 
		\item The paper ends with section \ref{sec:MappingSpaces} {where basic functional spaces, that is mapping spaces, are reviewed from the viewpoint of infinite dimensional geometry. Depending on the regularity required for mappings and on the source manifolds, one either observes Hilbert, Banach or ILB manifolds along the lines of \cite{Om}, or topological spaces which can be unserstood as diffeological spaces \cite{Ma2019}. Implicit functions, jet spaces, and spaces of triangulations are related examples where diffeologies apply in settings related to mapping spaces.}
	
\end{itemize}
	
	\section{A presentation of diffeological and Fr\"olicher spaces, with examples}
	\label{sec:DiffFroehl}
	
	\subsection{Diffeological and Fr\"olicher spaces}
	In this section we follow and complete 
	the expositions appearing in \cite{Ma2013,MR2019,MR2016}. The main reference for a comprehensive exposition on diffeologies is \cite{Igdiff}. This reference will be completed all along the text for specific concerns. 
	\begin{Definition} \label{d:diffeology} Let $X$ be a set.
		
		\noindent $\bullet$ A \textbf{p-parametrization} of dimension $p$ 
		on $X$ is a map from an open subset $O$ of $\R^{p}$ to $X$.
		
		\noindent $\bullet$ A \textbf{diffeology} on $X$ is a set $\p$
		of parametrizations on $X$ such that:
		
		\begin{itemize}
			\item For each $p\in\N$, any constant map $\R^{p}\rightarrow X$ is in $\p$;
			\item \label{d:local} For each arbitrary set of indexes $I$ and family $\{f_{i}:O_{i}\rightarrow X\}_{i\in I}$
			of compatible maps that extend to a map $f:\bigcup_{i\in I}O_{i}\rightarrow X$,
			if $\{f_{i}:O_{i}\rightarrow X\}_{i\in I}\subset\p$, then $f\in\p$.
			\item \label{d:compose} For each $f\in\p$, $f : O\subset\R^{p} \rightarrow X$, and $g : O' \subset \R^{q} \rightarrow O$, 
			in which $g$ is  
			a smooth map (in the usual sense) from an open set $O' \subset \R^{q}$ to $O$, we have $f\circ g\in\p$.
		\end{itemize} 
		
		\vskip 6pt If $\p$ is a diffeology on $X$, then $(X,\p)$ is
		called a \textbf{diffeological space} and, if $(X,\p)$ and $(X',\p')$ are two diffeological spaces, 
		a map $f:X\rightarrow X'$ is \textbf{smooth} if and only if $f\circ\p\subset\p'$. 
	\end{Definition} 
	
	The notion of a diffeological space is due to J.M. Souriau, see \cite{Sou}, who was inspired by the remarks and 
	 constructions from Chen in \cite{Chen}. A comprehensive exposition of basic concepts can be found in   \cite{Igdiff}. 
	\begin{Definition}
		Let $(X,\p)$ be a diffeological space. Let $\p'$ be another diffeology on $X.$ Then $\p'$ is a \textbf{subdiffeology} of $\p$ if and only if $\p' \subset \p$ or, in other words, if the identity mapping $(X,\p')\rightarrow (X,\p)$ is smooth.
	\end{Definition}
	\begin{ex}
		There exists on any nonempty set $X$ a diffeology, minimal for inclusion, made only of constant parametrizations. It is called the \textbf{discrete diffeology}. This is a subdiffeology of any diffeology on $X.$
	\end{ex}
\begin{ex}\label{ex:p(A)}
	{Let $X$ be a set. Given a non-empty family $A$ of functions $$f : O_f \rightarrow X$$ where $O_f$ is the domain of the function $f,$ which is an open subset of an Euclidian space, a direct application of Zorn's lemma proves that there exists a diffeology $\p(A),$ minimal for inclusion, that contains $A,$ in other words, for which $A$ is a family of smooth maps. This is the \textbf{diffeology generated by} $A.$}
\end{ex}
	The category of diffeological spaces is very large, and carries many different pathological examples even if it enables a very easy-to-use framework on infinite dimensional objects. therefore, a restricted category can be useful, the category of Fr\"olicher spaces.   
	This is for example the framework chosen in \cite{Can2020}.
	
	\begin{Definition} 
		A \textbf{Fr\"olicher} space is a triple $(X,\F,\C)$ such that
		
		- $\C$ is a set of paths $\R\rightarrow X$,
		
		- $\F$ is the set of functions from $X$ to $\R$, such that a function
		$f:X\rightarrow\R$ is in $\F$ if and only if for any
		$c\in\C$, $f\circ c\in C^{\infty}(\R,\R)$;
		
		- A path $c:\R\rightarrow X$ is in $\C$ (i.e. is a \textbf{contour})
		if and only if for any $f\in\F$, $f\circ c\in C^{\infty}(\R,\R)$.
		
		\vskip 5pt If $(X,\F,\C)$ and $(X',\F',\C ')$ are two
		Fr\"olicher spaces, a map $f:X\rightarrow X'$ is \textbf{smooth}
		if and only if $\F'\circ f\circ\C\subset C^{\infty}(\R,\R)$.
	\end{Definition}
	
 This definition first appeared in {\cite{Fr1982}, see e.g.} \cite{FK}, but the actual terminology 
	was fixed to our knowledge in Andreas Kriegl and Peter Michor's book \cite{KM}, and independently by Paul Cherenack in \cite{Ch1998,Ch1999}.
	
	\vskip 6pt
	
	The comparison of these two frameworks does need to be exposed for a complete review.  Its first steps were published in \cite{Ma2006-3}; the reader can 
	also see \cite{Ma2013,Ma2018-2,MR2016,Wa} for extended expositions.
	In particular, it is explained in \cite{MR2016} that 
	{\em Diffeological, Fr\"olicher and Gateaux smoothness are the same notion if we 
		restrict ourselves to a Fr\'echet context,} in a way that we explain here. 
	
	For this, we first need to analyze how we generate a Fr\"olicher or a diffeological space, that is, how we implement a Fr\"olicher or a diffeological structure on a given set $X.$
	
	Any family of maps $\F_{g}$ from $X$ to $\R$ generates a 
	Fr\"olicher structure $(X,\F,\C)$ by setting, after \cite{KM}:
	
	- $\C=\{c:\R\rightarrow X\hbox{ such that }\F_{g}\circ c\subset C^{\infty}(\R,\R)\}$
	
	- $\F=\{f:X\rightarrow\R\hbox{ such that }f\circ\C\subset C^{\infty}(\R,\R)\}.$
	
	We call $\F_g$ a \textbf{generating set of functions}
	for the Fr\"olicher structure $(X,\F,\C)$. One easily see that
	$\F_{g}\subset\F$. 
	A Fr\"olicher space $(X,\F,\C)$
	carries a natural topology, the pull-back topology of
	$\R$ via $\F$. 
	
	The same way, one can start alternatively from a generating set of contours $\C_g,$ and set:
	
	- $\F=\{f:X\rightarrow\R\hbox{ such that }f\circ\C_g\subset C^{\infty}(\R,\R)\}$
	 
	- $\C=\{c:\R\rightarrow X\hbox{ such that }\F\circ c\subset C^{\infty}(\R,\R)\}.$
	
	In the case of a finite dimensional
	differentiable manifold $X$ we can take $\F$ as the set of all smooth
	maps from $X$ to $\R$, and $\C$ the set of all smooth paths from
	$\R$ to $X.$ Then, the underlying topology of the
	Fr\"olicher structure is the same as the manifold topology
	\cite{KM}. 
	
	We also remark that if $(X,\F, \C)$ is a Fr\"olicher space, we can
	define a natural diffeology on $X$ by using the following family
	of maps $f$ defined on open domains $D(f)$ of Euclidean spaces, see \cite{Ma2006-3}:
	$$
	\p_\infty(\F)=
	\coprod_{p\in\N^*}\{\, f: D(f) \rightarrow X\, | \, D(f) \hbox{ is open in } \R^p \hbox{ and } \F \circ f \in C^\infty(D(f),\R) \}.$$
	
	If $X$ is a finite-dimensional differentiable manifold, setting $\F=C^\infty(X,\R),$ this diffeology is
	called the { \em nebulae diffeology}, see e.g. \cite{Igdiff}. Now,
	we can easily show the following:
	
	\begin{Proposition} \label{fd} \cite{Ma2006-3}
		Let$(X,\F,\C)$
		and $(X',\F',\C')$ be two Fr\"olicher spaces. A map $f:X\rightarrow X'$
		is smooth in the sense of Fr\"olicher if and only if it is smooth for
		the underlying diffeologies $\p_\infty(\F)$ and $\p_\infty(\F').$
	\end{Proposition}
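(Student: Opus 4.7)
The plan is to unpack both smoothness notions directly from their definitions and show each implies the other, using two small observations about how $\F$, $\C$, and $\p_\infty(\F)$ are interrelated.

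For the forward direction, I will assume $f$ is Fr\"olicher-smooth, i.e.\ $\F' \circ f \circ \C \subset C^\infty(\R,\R)$, and pick an arbitrary plot $\phi \in \p_\infty(\F)$ defined on an open $D \subset \R^p$. To check $f\circ\phi \in \p_\infty(\F')$ I must verify $g\circ f \circ \phi \in C^\infty(D,\R)$ for every $g \in \F'$. The key trick is to show that $g \circ f$ belongs to $\F$: for any contour $c \in \C$, $(g\circ f)\circ c = g \circ (f\circ c) \in C^\infty(\R,\R)$ by Fr\"olicher-smoothness of $f$, and by the defining property of $\F$ as the set of all $h \colon X\to\R$ with $h\circ \C \subset C^\infty(\R,\R)$, this forces $g\circ f \in \F$. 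Then $(g\circ f)\circ \phi \in C^\infty(D,\R)$ follows immediately from $\phi \in \p_\infty(\F)$, finishing this direction.

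For the backward direction, the main observation is that every contour $c \in \C$ is itself a $1$-parametrization in $\p_\infty(\F)$: indeed $\F \circ c \subset C^\infty(\R,\R)$ is precisely the defining condition of a contour, which is exactly the condition for $c$ to lie in $\p_\infty(\F)$ with $p=1$ and $D=\R$. If $f$ is diffeologically smooth from $\p_\infty(\F)$ to $\p_\infty(\F')$, then $f\circ c \in \p_\infty(\F')$, which by unfolding the definition of $\p_\infty(\F')$ yields $\F' \circ f \circ c \subset C^\infty(\R,\R)$. Since $c \in \C$ was arbitrary, we obtain $\F' \circ f \circ \C \subset C^\infty(\R,\R)$, which is Fr\"olicher-smoothness of $f$.

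There is no real obstacle in this argument; it is a bookkeeping exercise once one has recognized the two key facts above. The only subtle point worth emphasizing in the write-up is the reconstruction of $\F$ from $\C$ via the saturation condition of the Fr\"olicher structure, since it is this reflexive duality between $\F$ and $\C$ that makes $g\circ f \in \F$ available from knowledge of $f$ only along contours.
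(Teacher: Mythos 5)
Your proof is correct and is essentially the standard argument (the paper itself only cites \cite{Ma2006-3} and does not reproduce a proof): the forward direction via the observation that $g\circ f\in\F$ for every $g\in\F'$ by the reflexivity of the Fr\"olicher structure, and the backward direction via the observation that every contour is a $1$-plot of $\p_\infty(\F)$. Nothing is missing.
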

	
	Thus, Proposition \ref{fd} and the foregoing remarks imply that 
	the following implications hold:
	\vskip 12pt
	
	\begin{tabular}{ccccc}
		smooth manifold  & $\Rightarrow$  & Fr\"olicher space  & $\Rightarrow$  & diffeological space
	\end{tabular}
	{
		\vskip 12pt \noindent These implications can be analyzed in a refined way. The reader is
		referred to the Ph.D. thesis \cite{Wa} for a deeper analysis of 
		them. 
		
		\begin{rem}
			The set of contours $\C$ of the Fr\"olicher space $(X,\F,\C)$ does 
			not give us a diffeology, because a diffeology needs to be stable 
			under restriction of domains. In the case of paths in $\C$ the 
			domain is always $\R$ whereas the domain of 1-plots can (and has 
			to) be any interval of $\R.$ However, $\C$ defines a ``minimal 
			diffeology'' $\p_1(\F)$ whose plots are smooth parametrizations 
			which are locally of the type $c \circ g,$ in which 
			$g \in \p_\infty(\R)$  and $c \in \C.$ Within this setting, we can 
			replace $\p_\infty$ by $\p_1$ in Proposition $\ref{fd}$. The main
			technical tool needed to discuss this issue is Boman's theorem 
			\cite[p.26]{KM}.
			Related discussions are in \cite{Ma2006-3,Wa}.
		\end{rem}
		After this remark, one can push further the ``restriction'' procedure, for any diffeology $\p$ on $X.$
		
		\begin{Theorem} \label{th:dim}
			Let $\p$ be a diffeology on $X.$
			Let $\F(\p)$ be the set of smooth functions from $(X,\p)$ to $(\R,\p_\infty(\R))$ where $\R$ is understood as a smooth manifold. Then: 
			\begin{itemize}
				\item there exists a nebulae diffeology $\p_\infty$ generated by $\F(\p),$
				\item $\p\subset \p_\infty$
				\item $\forall k \in \N^*,$ there exists a \textbf{$n-$ dimensional diffeology} $\p_k \subset \p$ made of plots $p \in \p$ which factors tthrough $$ D(\p) \rightarrow \R^k \rightarrow X$$
				\item $\forall k \in \N,$ the set of smooth maps from $(X,\p_k)$ to $(\R,\p_\infty(R))$ coincides with $\F(\p).$
			\end{itemize}
		\end{Theorem}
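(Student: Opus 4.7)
The plan is to treat the four items in order, using the nebulae construction recalled just before Proposition \ref{fd} together with the generation principle of Example \ref{ex:p(A)}. For item (1), I set $\p_\infty := \p_\infty(\F(\p))$, namely the set of parametrizations $f : D(f) \to X$ such that $\F(\p) \circ f \subset C^\infty(D(f),\R)$; the three diffeology axioms follow at once from the locality of smoothness for real-valued functions on open Euclidean sets, its stability under precomposition with smooth maps, and the obvious satisfaction of the condition by constants. Item (2) is then immediate: if $p \in \p$, then by definition of $\F(\p)$ we have $f \circ p \in C^\infty(D(p),\R)$ for every $f \in \F(\p)$, so $p \in \p_\infty$.

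For item (3), fix $k \geq 1$ and let $\A_k$ be the set of parametrizations of the form $q \circ g$, where $q \in \p$ has domain an open set $U \subset \R^k$ and $g : O \to U$ is smooth in the ordinary sense. By the composition axiom for $\p$, each such $q \circ g$ already lies in $\p$, so $\A_k \subset \p$. I then define $\p_k := \p(\A_k)$, the diffeology generated by $\A_k$ as in Example \ref{ex:p(A)}; since $\p$ itself is a diffeology containing $\A_k$, minimality forces $\p_k \subset \p$. Moreover $\A_1 \subset \A_k$ for every $k \geq 1$, by composing any $1$-dimensional factorization with the first projection $\R^k \to \R$, which yields $\p_1 \subset \p_k \subset \p$.

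For item (4), I invoke Boman's theorem as in the remark following Proposition \ref{fd}. The inclusions $\p_1 \subset \p_k \subset \p$ give $\F(\p) \subset \F(\p_k) \subset \F(\p_1)$, so it suffices to prove $\F(\p_1) \subset \F(\p)$. Let $f \in \F(\p_1)$ and let $p : O \to X$ be a plot of $\p$. For every ordinary-smooth curve $c : \R \to O$, the composition $p \circ c$ lies in $\p$ and coincides with its own factorization through $\mathrm{id}_\R$, so $p \circ c \in \A_1 \subset \p_1$; hence $(f \circ p) \circ c = f \circ (p \circ c)$ is smooth $\R \to \R$. Boman's theorem then forces $f \circ p \in C^\infty(O,\R)$, so $f \in \F(\p)$.

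The delicate step is item (3): the raw property ``factors globally through $\R^k$'' is not stable under the locality axiom of a diffeology, since compatible pieces of a single plot may admit incompatible $k$-dimensional factorizations. This is why I generate $\p_k$ from $\A_k$ rather than working with $\A_k$ itself, and it is also the reason why the compatibility with item (4) must ultimately rest on Boman's theorem, which reduces the verification to $1$-dimensional plots where the global factorization issue disappears.
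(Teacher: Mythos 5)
Your proof is correct and follows exactly the route the paper indicates: the paper itself offers no argument beyond citing \cite{Ma2013} together with the preceding remark, which flags Boman's theorem as the tool for passing between $\p_1$ and $\p_\infty$ --- precisely the device you use in item (4), with items (1)--(3) handled by the nebulae construction and the generation principle of Example \ref{ex:p(A)} just as the paper sets them up. Your observation that the set of globally $k$-factorizable plots need not satisfy the locality axiom, so that $\p_k$ must be taken as the diffeology generated by $\A_k$, is a legitimate precision of the theorem's loose phrasing rather than a deviation from it.
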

		This construction for $\p=\p_\infty$ can be found in \cite{Ma2013}.
		It extends trivially to any diffeology $\p.$ By this theorem, one can easily see that the categories of Fr\"olicher spaces and diffeological spaces do not coincide. 
		We now have to precise:
		\begin{Definition}
			\cite{Wa}
			If $\p=\p_\infty,$ the diffeology $\p$ is called reflexive (and it is the nebulae diffeology of a Fr\"olicher space).
		\end{Definition}  
	
	\begin{rem}
		Given a diffeological space $(X,\p)$ there is a natural topology on $X$ called $D-$topology in \cite{CSW} which is the final topology for the set of maps $p \in \p.$ This topology can be rather complicated and actually not-so-studied. To our best knowledge, this problem of topology associated to a diffeology is better studied actually in the context of Fr\"olicher spaces, see e.g. \cite{FK,KM} for a serie of technical results.
	\end{rem}
	Let us now give a {class of} examples of diffeologies.
	 
	 \begin{ex}
	 	Let $k \in \N.$ Let $(X,\F,\C)$ be a Fr\"olicher space. The $C^k-$diffeology of $(X,\F,\C)$ is the diffeology $\p_{(k)}$ defined by: $$\forall d \in \N^*, \, \forall O \hbox{ open subset of } \R^d \quad {\p_{(k)}}_O = \left\{ p : O \rightarrow X \, | \, \F \circ p \subset C^k(O,\R) \right\}$$
	 		and 
	 		$$\p_{(k)} = \bigcup_{d \in \N^*} \bigcup_{O \hbox{ open in }\R^d} {\p_{(k)}}_O.$$
	 		This example is given in \cite{Igdiff} for a finite dimensional manifold. The extension to Fr\"olicher structures is straightforward. 
	 		
	 \end{ex}

		\subsection{Diffeologies on spaces of mappings, products, quotients, subsets and algebraic structures}
		\begin{Proposition} \label{prod1} \cite{Sou,Igdiff} Let $(X,\p)$ and $(X',\p')$
			be two diffeological spaces. There exists a diffeology 
			$\p\times\p'$ on
			$X\times X'$  made of plots $g:O\rightarrow X\times X'$
			that decompose as $g=f\times f'$, where $f:O\rightarrow X\in\p$
			and $f':O\rightarrow X'\in\p'$. We call it the \textbf{product diffeology}, 
			and  this construction extends to an infinite (maybe not countable) product.
		\end{Proposition}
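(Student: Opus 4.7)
The plan is to verify, for the candidate set of parametrizations described in the statement, the three defining axioms of a diffeology from Definition \ref{d:diffeology}. Denote by $\pi : X \times X' \to X$ and $\pi' : X \times X' \to X'$ the canonical projections. The condition that $g : O \to X \times X'$ decompose as $g = f \times f'$ with $f \in \p$ and $f' \in \p'$ is to be read, since $f$ and $f'$ share the common domain $O$, as $g(x) = (f(x), f'(x))$, which is equivalent to asking that $\pi \circ g \in \p$ and $\pi' \circ g \in \p'$.

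First I would verify the covering axiom: any constant parametrization $g : \R^p \to X \times X'$ projects under $\pi$ and $\pi'$ to constant parametrizations in $X$ and $X'$, which lie in $\p$ and $\p'$ by assumption. For the locality axiom, given a compatible family $\{g_i : O_i \to X \times X'\}_{i \in I}$ in the candidate set that extends to $g : \bigcup_i O_i \to X \times X'$, the projections $\pi \circ g$ and $\pi' \circ g$ restrict on each $O_i$ to $\pi \circ g_i \in \p$ and $\pi' \circ g_i \in \p'$ respectively; the locality axiom applied to $\p$ and $\p'$ separately then gives $\pi \circ g \in \p$ and $\pi' \circ g \in \p'$, so $g$ belongs to the candidate set. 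Finally, for the smooth composition axiom, if $g$ is in the candidate set and $h : O'' \to O$ is smooth in the usual sense, then $\pi \circ (g \circ h) = (\pi \circ g) \circ h \in \p$ by the composition axiom in $\p$, and similarly for $\pi'$, whence $g \circ h$ is again in the candidate set.

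The extension to an arbitrary (not necessarily countable) product $\prod_{i \in I} X_i$ of a family of diffeological spaces $\{(X_i,\p_i)\}_{i \in I}$ proceeds identically: declare $g : O \to \prod_i X_i$ to be a plot if and only if $\pi_i \circ g \in \p_i$ for every $i \in I$, where $\pi_i$ denotes the canonical projection onto $X_i$. The three axioms are checked componentwise, each verification invoking only the axioms of the corresponding factor $\p_i$ independently, so no cardinality assumption on $I$ is ever used.

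There is no genuine obstacle in this argument; the only point deserving mild attention is the interpretation of the notation $g = f \times f'$ as $(f,f')$ on the common domain $O$, which is what makes the equivalence with smoothness of the two projections transparent. Once this is fixed, all three diffeology axioms for $\p \times \p'$ (and for the general product) reduce mechanically to the same axioms for the factors, applied factorwise.
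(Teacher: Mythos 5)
Your proof is correct and is exactly the standard componentwise verification that the paper implicitly relies on by citing \cite{Sou,Igdiff}: one reads $g = f \times f'$ as $g = (\pi\circ g, \pi'\circ g)$ and checks the three axioms of Definition \ref{d:diffeology} factorwise, which also handles arbitrary index sets without change. Your clarifying remark about interpreting $f \times f'$ on the common domain $O$ is apt, and nothing further is needed.
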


		We apply this result to the case of Fr\"olicher spaces and we 
		derive (compare with \cite{KM}) the following:
		
		\begin{Proposition} \label{prod2} 
			Let $(X,\F,\C)$ and $(X',\F',\C')$ be two Fr\"olicher spaces 
			equipped with their natural diffeologies $\p$ and $\p'$ . There is 
			a natural structure of Fr\"olicher space on $X\times X'$ which 
			contours $\C\times\C'$ are the 1-plots of $\p\times\p'$. 
		\end{Proposition}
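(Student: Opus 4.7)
The plan is to construct the Fr\"olicher structure on $X\times X'$ from a generating family of functions built out of the projections, and then identify its contours with $\C\times\C'$ and with the 1-plots of $\p\times\p'$ simultaneously.

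First, I would let $\pi_1 : X\times X'\to X$ and $\pi_2 : X\times X'\to X'$ be the canonical projections, and take as generating set of functions
\[
\F_g \;=\; \{\, f\circ\pi_1 \,:\, f\in\F\,\}\,\cup\,\{\, f'\circ\pi_2 \,:\, f'\in\F'\,\}.
\]
By the generating procedure recalled earlier in the text, $\F_g$ determines a Fr\"olicher structure on $X\times X'$ with contours
\[
\C_{X\times X'} \;=\; \{\, c:\R\to X\times X' \,:\, \F_g\circ c\subset C^\infty(\R,\R)\,\}
\]
and then $\F_{X\times X'}=\{h:X\times X'\to\R\,:\,h\circ\C_{X\times X'}\subset C^\infty(\R,\R)\}$.

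Second, I would unwind what $\C_{X\times X'}$ is. A path $c:\R\to X\times X'$ has coordinates $c=(c_1,c_2)$ with $c_1=\pi_1\circ c$ and $c_2=\pi_2\circ c$. The condition $\F_g\circ c\subset C^\infty(\R,\R)$ is equivalent to $f\circ c_1\in C^\infty(\R,\R)$ for every $f\in\F$ and $f'\circ c_2\in C^\infty(\R,\R)$ for every $f'\in\F'$, which by the defining property of contours in a Fr\"olicher space means exactly $c_1\in\C$ and $c_2\in\C'$. Hence $\C_{X\times X'}=\C\times\C'$ (identifying pairs of paths with paths into the product).

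Third, I would compare with $\p\times\p'$. By Proposition \ref{prod1}, a 1-plot $g:\R\to X\times X'$ of $\p\times\p'$ decomposes as $g=(g_1,g_2)$ with $g_1\in\p$ and $g_2\in\p'$. Since $\p=\p_\infty(\F)$ and $\p'=\p_\infty(\F')$ are the nebulae diffeologies of the given Fr\"olicher structures, a 1-plot into $X$ defined on $\R$ is nothing but a map $g_1:\R\to X$ with $\F\circ g_1\subset C^\infty(\R,\R)$, i.e.\ a contour in $\C$, and likewise for $g_2$. Combining these observations, the 1-plots of $\p\times\p'$ with domain $\R$ coincide with $\C\times\C'$, which is the conclusion.

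The step requiring the most care, rather than any real obstacle, is checking that the Fr\"olicher structure obtained from $\F_g$ is the natural one, in the sense that its own nebulae diffeology $\p_\infty(\F_{X\times X'})$ is compatible with $\p\times\p'$; this reduces, via the equivalence between smoothness in the Fr\"olicher and diffeological senses (Proposition \ref{fd}), to the projection-wise characterization already used above, so no extra difficulty arises.
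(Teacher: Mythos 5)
Your proof is correct and follows essentially the route the paper intends: the paper gives no separate argument beyond invoking Proposition \ref{prod1} and the generating-family machinery, which is exactly what you make explicit via $\F_g=\{f\circ\pi_1\}\cup\{f'\circ\pi_2\}$ and the coordinatewise unwinding of the contour condition. The only point worth flagging is that 1-plots of $\p\times\p'$ may have domain any open subset of $\R$ rather than all of $\R$ (a distinction the paper itself raises in a later remark), so the identification with $\C\times\C'$ should be read as concerning the globally defined 1-plots; this does not affect the substance of your argument.
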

		
		We can also state the above result for infinite products;
		we simply take Cartesian products of the plots, or of the contours.
		
		\smallskip
		
		Now we consider quotients after \cite{Sou} and 
		\cite[p. 27]{Igdiff}: Let $(X,\p)$ be 
		a diffeological space, and let $X'$ be a set. Let 
		$f:X\rightarrow X'$ be a map. We define the \textbf{push-forward 
			diffeology}, noted by $f_*(\p),$ as the coarsest (i.e. minimal for inclusion) among 
		the diffologies on $X'$, which contains $f \circ \p,$ that is, for which $f$ is smooth. {This construction is a specification of example \ref{ex:p(A)}, where the generating family $A$ is set as $A=f \circ \p.$ A smooth map $f:(X,\p) \rightarrow (X',\p ')$ between two diffeoogical spaces is called a \textbf{subduction} if it is surjective anf if $f_*(\p) = \p'.$} Conversly, if $(X',\p')$ is a diffeological space and if $X$ is a set, if $f:X\rightarrow X',$ the \textbf{pull-back diffeology} noted by $f^*(\p')$ is the diffeology on $X,$ maximal for inclusion, for which $f$ is smooth. 
		
		\begin{Proposition} \label{quotient} \cite{Sou,Igdiff}
			Let $(X,\p)$ b a diffeological space and $\rel$ an equivalence 
			relation on $X$. Then, there is a natural diffeology on $X/\rel$, 
			noted by $\p/\rel$, defined as the push-forward diffeology on 
			$X/\rel$ by the quotient projection $X\rightarrow X/\rel$. 
		\end{Proposition}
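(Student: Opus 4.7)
The plan is to directly instantiate the push-forward diffeology construction defined in the paragraph immediately preceding the statement. Let $\pi : X \to X/\rel$ denote the canonical projection. I set $\p/\rel := \pi_*(\p)$, the coarsest diffeology on $X/\rel$ containing the family $A := \pi \circ \p$. Existence of such a minimal diffeology is provided by Example \ref{ex:p(A)}: Zorn's lemma applied to the collection of diffeologies on $X/\rel$ containing $A$ yields the generated diffeology $\p(A)$, and one identifies $\pi_*(\p)$ with $\p(A)$. Smoothness of $\pi$ for this diffeology is then built in by definition, since $\pi \circ \p \subset \p/\rel$.

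I would then verify the three axioms of Definition \ref{d:diffeology} for $\p/\rel$ using the explicit description of $\p(A)$ as the collection of parametrizations $q : O \to X/\rel$ such that each point of $O$ has a neighborhood on which $q$ coincides with some $\pi \circ f \circ g$, with $f \in \p$ and $g$ smooth in the usual sense. Constants are handled by lifting a constant map $\R^p \to X/\rel$ to a constant element of $\p$ (one such lift always exists, since constants lie in $\p$). Stability under pre-composition with a smooth map between open sets of Euclidean spaces follows at once from the corresponding stability of $\p$. The locality axiom reduces to locality of $\p$ itself, with the key subtlety being that the local lifts $f_i \in \p$ need not patch globally into a single plot of $X$, yet their images under $\pi$ always patch into a well-defined parametrization of $X/\rel$ which is, by construction, again locally of the prescribed form.

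The main obstacle, if there is any, is precisely this locality step: one has to confirm that the ``locally liftable through $\pi$'' property is itself stable under gluing along an arbitrary covering. This is automatic because locality is already built into $\p$: on each piece of the covering we obtain a local lift, and verifying the defining condition of $\p(A)$ is a pointwise statement on the source. Once this is settled, the naturality of $\p/\rel$ follows immediately: any diffeology $\p'$ on $X/\rel$ for which $\pi$ is smooth must contain $\pi \circ \p = A$, hence contains the minimal $\p(A) = \p/\rel$. This confirms that $\p/\rel$ is the canonical quotient diffeology associated with $\rel$.
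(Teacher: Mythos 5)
Your proposal is correct and follows essentially the same route as the paper, which simply applies the push-forward construction (itself a specification of Example \ref{ex:p(A)}) to the surjective quotient projection and leaves the axiom checks implicit. Your explicit verification of the three axioms for the generated diffeology and of minimality is the routine content the paper omits, and all the steps are sound.
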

		
		Given a subset $X_{0}\subset X$, where $X$ is a Fr\"olicher space
		or a diffeological space, we equip $X_{0}$ with structures 
		induced by $X$ as follows:
		\begin{enumerate}
			\item If $X$ is equipped with a diffeology $\p$, we define
			a diffeology $\p_{0}$ on $X_{0}$ called the \textbf{subset or trace 
				diffeology}, see \cite{Sou,Igdiff}, by setting 
			\[ 
			\p_{0}=
			\lbrace p\in\p \hbox{ such that the image of }p\hbox{ is a subset 
				of } X_{0}\rbrace\; .
			\]
			\item If $(X,\F,\C)$ is a Fr\"olicher space, we take as a 
			generating set of maps $\F_{g}$ on $X_{0}$ the restrictions of the 
			maps $f\in\F$. In this case, the contours (resp. the induced 
			diffeology) on $X_{0}$ are the contours (resp. the plots) on $X$ 
			whose images are a subset of $X_{0}$.
		\end{enumerate}
		\begin{ex}\label{ex:YpowerX}
			Let us consider the infinite product diffeology on $Y^X.$ This is the largest diffeology for which the evaluation maps $\operatorname{ev}_x$ are smooth for each $x \in X.$ Therefore, any other diffeology on any subset of $Y^X$ {for which evaluation maps are smooth} is a subdiffeology of the subset diffeology inherited from the this product diffeology.
		\end{ex}
			\begin{ex}
			Let $M$ be a smooth (Riemannian) manifold. Then, the set $M^\N$ of sequences on $M$ is a topological space with open sets of the form $$O_0 \times O_1 \times ... \times O_k \times ...$$ where the family $\{O_k, k \in \N\}$ is a family of open subsets of $M$ such that $O_k=M$ except for a finite set of indexes $k.$ With this topology, it seems difficult to define an atlas on $M^\N$ while an intuitive ``natural'' differentiation holds by considering differentiation on each component of the infinite product. By considering a diffeology $\p$ on $M$ such that $\F(\p)=C^\infty(M,\R)$ in the usual sense, the diffeology $\p$ also defines a diffeology on $M^\N$ {by (countable) infinite dimensional product diffeology along the lines of Example \ref{ex:YpowerX}}  that encodes this ``natural differentiation''. {An example of such a diffeology is obtained by setting $\p=\p_\infty(M).$} Following Proposition \ref{prod2}, it is an easy exercise to check that the n-dimensional diffeology $\p_n$ related to $\p$ generates also a n-dimensional diffeology on the infinite product. 
			
			Moreover, one can also consider subsets of $M^\N$ which inherit a subset diffeology, such as the so called \emph{marked (infinite) configuration spaces} (see e.g. \cite{AKLU2000}) that we note here by 
			$$O\Gamma_M =\left\{ (x_n)\in M^\N \, | \, \forall (m,n)\in \N^2, m \neq n \Leftrightarrow x_n \neq x_m \right.$$
			$$\left. \hbox{ and for all bounded set } B \subset M, |\{x_n \, | \, n \in \N\}\cap B| < +\infty \right\}.$$
			It is an easy exercise to prove that he differentiable structure on $O\Gamma_M,$ defined by another generatized notion of differentiability called Sikorski differentiability (see e.g. \cite{St2011} for an overview), fits here with our diffeology, once we have remarked that each point $x_n \in M$ can be identified with its Dirac measure $\delta_{x_n}$ acting on $C^\infty(M,\R).$ 
			
			Associated with marked infinite configuration spaces, the (unmarked) \emph{infinite configuration space} $\Gamma_M$ was studied by the same research group, see e.g. \cite{ADL2001}, can be understood as the space of marked configurations up to reindexation. More precisely, the infinite symmetric group $\mathfrak{G}_\infty$ made of bijections of $\N$ is acting on the indexes of the sequences of $O\Gamma_M,$ and $$\Gamma_M = O\Gamma_M /   \mathfrak{G}_\infty.$$ Equipped with its quotient diffeology, this space inherits the same structure for generalized differentiability as the one described in the original works. 
						Other similar examples can be found in \cite{Ism}.
	
		\end{ex}

		\begin{ex}\cite{Ma2020-3}
			Let $Y$ be a smooth complete manifold (not necessarily finite dimensional) and let $(X,\p)$ be a diffeological space such that $X \subset Y$ as a set, and $\p \subset \p_\infty(Y)$ (in other words, the canonical inclusion map is smooth for the nebulae diffeology on $Y$). Then we define $\C(X,Y)$ as the set of sequences in $X$ that converge in $Y,$ in other words, the subset of $X^\N$ made of sequences that are Cauchy for the topology of $Y.$ Then, 
			\begin{itemize}
				\item the limit $$
				\lim: \C(X,Y) \rightarrow Y$$ is not smooth when $\C(X,Y)$ is equipped with the subset topology inherited from the infinite product diffeology of $X^\N$, and when $Y$ is equipped with $\p_\infty(Y),$ as soon as there exists a non constant smooth path in $X.$
				\item  $\lim^*(\p_\infty(Y))$ may make the canonical projection (evaluation) maps $$ \C(X,Y) \rightarrow X$$ nonsmooth, consider for example $Y = \R$ and $X = \mathbb{Q}$ equipped with its subset diffeology.
			\end{itemize} 
		Therefore, there is a true necessity to define the \emph{Cauchy diffeology} on $\C(X,Y)$ for which the limit map and the evaluation maps are smooth in $\C(X,Y).$  
		\end{ex}
		Our last general construction is the so-called functional 
		diffeology. 
		{
			Its existence implies the following crucial fact: the category of 
			diffeological spaces is Cartesian 
			closed.} Our discussion follows \cite{Igdiff}. 
		Let $(X,\p)$ and $(X',\p')$ be diffeological spaces. 
		Let $M \subset C^\infty(X,X')$ be a set of smooth maps. 
		The \textbf{functional diffeology} on $S$ is the diffeology $\p_S$
		made of plots 
		$$ \rho : D(\rho) \subset \R^k \rightarrow S$$
		such that, for each $p \in \p$, the maps 
		$\Phi_{\rho, p}: (x,y) \in D(p)\times D(\rho) \mapsto 
		\rho(y)(x) \in X'$ are plots of $\p'.$ 
		We have, see \cite[Paragraph 1.60]{Igdiff}:
		
		\begin{Proposition} \label{cvar} 
			Let $X,Y,Z$ be diffeological spaces. Then, 
			$$
			C^\infty(X\times Y,Z) = C^\infty(X,C^\infty(Y,Z)) = 
			C^\infty(Y,C^\infty(X,Z))
			$$
			as diffeological spaces equipped with functional diffeologies.
		\end{Proposition}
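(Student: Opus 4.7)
The plan is to prove the first equality $C^\infty(X\times Y, Z) = C^\infty(X, C^\infty(Y,Z))$ by exhibiting the classical currying bijection
$$\Psi : f \longmapsto \hat f, \qquad \hat f(x)(y) := f(x,y),$$
checking that it restricts to a bijection on the smooth-map spaces and that it induces an equality of the two functional diffeologies under this identification. The second equality then follows by swapping the roles of $X$ and $Y$.

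First I would show that $\Psi$ sends $C^\infty(X\times Y, Z)$ into $C^\infty(X, C^\infty(Y, Z))$. For fixed $x \in X$, the map $\hat f(x) : y \mapsto f(x,y)$ factors as $f$ composed with $y \mapsto (x,y)$, the pairing of a constant with the identity; this is smooth by Proposition \ref{prod1}, so $\hat f(x) \in C^\infty(Y, Z)$. To see that $\hat f : X \to C^\infty(Y, Z)$ is itself smooth, I take a plot $p : U \to X$ and a plot $q : V \to Y$ and observe that
$$(u,v) \longmapsto \hat f(p(u))(q(v)) = f(p(u), q(v)) = f \circ (p \times q)(u,v)$$
is a plot of $Z$, since $p \times q$ is a plot of the product diffeology by Proposition \ref{prod1} and $f$ is smooth. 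By the definition of the functional diffeology this is exactly the statement that $\hat f \circ p$ is a plot of $C^\infty(Y, Z)$.

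Second, for the inverse direction, given $g \in C^\infty(X, C^\infty(Y, Z))$ I set $\check g(x,y) := g(x)(y)$. Take any plot $h : W \to X \times Y$; by the description of the product diffeology $h = (h_X, h_Y)$ with $h_X, h_Y$ plots of $X, Y$ respectively. Smoothness of $g$ gives that $g \circ h_X$ is a plot of the functional diffeology on $C^\infty(Y, Z)$, and applying the defining property of that diffeology to the plot $h_Y$ yields that $(w, w') \mapsto g(h_X(w))(h_Y(w'))$ is a plot of $Z$ on $W \times W$. Precomposition with the smooth diagonal $W \to W \times W$ produces $\check g \circ h$, which is therefore a plot of $Z$, so $\check g$ is smooth.

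Third, I would upgrade this set-level bijection to an identification of the two functional diffeologies. A plot of $C^\infty(X \times Y, Z)$ is a parametrization $\rho : O \to C^\infty(X \times Y, Z)$ whose evaluation $(o, x, y) \mapsto \rho(o)(x, y)$ is smooth (tested against plots in $X$ and $Y$), whereas a plot of $C^\infty(X, C^\infty(Y, Z))$ is a $\sigma : O \to C^\infty(X, C^\infty(Y, Z))$ whose iterated evaluation $(o, x, y) \mapsto \sigma(o)(x)(y)$ is smooth. Since these two evaluations are literally equal under $\Psi$ (applied pointwise in $o$), the plots correspond, and the two diffeologies agree. The main obstacle I expect is purely bookkeeping: each argument juggles several parameter spaces (the plot domain of the functional diffeology together with the plot domains of the source factors), and one must carefully track which map is being recognized as a plot of which diffeology at each step. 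Once Proposition \ref{prod1} is invoked systematically to recognize products of plots, every verification collapses to a composition of maps already known to be smooth.
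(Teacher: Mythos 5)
Your proposal is correct, and it is the standard currying argument; the paper itself gives no proof of Proposition \ref{cvar} but cites \cite[Paragraph 1.60]{Igdiff}, where essentially this same argument (smoothness of $f\mapsto\hat f$ and its inverse via products of plots, plus the matching of iterated evaluations for the functional diffeologies) is carried out. The one step worth making explicit is the equivalence between testing against a single plot of $X\times Y$ (two plots with a common domain) and testing against independent plots of $X$ and $Y$, but your diagonal-precomposition argument in the inverse direction already supplies exactly that.
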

		
		The next property will be useful for us:
		\begin{Proposition}
			The composition of maps is smooth for functional diffeologies. 
		\end{Proposition}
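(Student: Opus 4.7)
The plan is to unpack the definition of the functional diffeology directly, using the Cartesian closedness given by Proposition \ref{cvar} to reduce everything to a statement about smooth maps out of a product of Euclidean open sets.

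First I would fix three diffeological spaces $(X,\p_X)$, $(Y,\p_Y)$, $(Z,\p_Z)$ and consider the composition map
\[
\mathrm{comp} : C^\infty(Y,Z) \times C^\infty(X,Y) \longrightarrow C^\infty(X,Z), \qquad (g,f) \longmapsto g \circ f,
\]
where all three mapping spaces carry their functional diffeology and the source carries the product diffeology of Proposition \ref{prod1}. By Proposition \ref{cvar}, it suffices to verify that the associated evaluation map
\[
E : C^\infty(Y,Z) \times C^\infty(X,Y) \times X \longrightarrow Z, \qquad (g,f,x) \longmapsto g(f(x))
\]
is smooth for the product diffeology on the source.

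Next I would take an arbitrary plot of the source. By the product diffeology, such a plot on an open set $O \subset \R^k$ splits as $(\rho_1,\rho_2,q)$ where $\rho_1 : O \to C^\infty(Y,Z)$ is a plot of $\p_{C^\infty(Y,Z)}$, $\rho_2 : O \to C^\infty(X,Y)$ is a plot of $\p_{C^\infty(X,Y)}$, and $q : O \to X$ is a plot of $\p_X$. Applying the definition of the functional diffeology to $\rho_2$ and the plot $q$, the map
\[
\Phi_{\rho_2,q} : (u,v) \in O\times O \longmapsto \rho_2(u)(q(v)) \in Y
\]
is a plot of $\p_Y$. Restricting to the diagonal via the smooth map $u \mapsto (u,u)$, which preserves the class of plots by the third axiom of Definition \ref{d:diffeology}, yields a plot $p : O \to Y$ defined by $p(u)=\rho_2(u)(q(u))$.

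Finally I would apply the definition of the functional diffeology to $\rho_1$ with the plot $p$ just obtained: the map
\[
\Phi_{\rho_1,p} : (u,v) \in O \times O \longmapsto \rho_1(u)\bigl(p(v)\bigr) \in Z
\]
is a plot of $\p_Z$, and again restricting to the diagonal produces the map $u \mapsto \rho_1(u)(\rho_2(u)(q(u))) = E(\rho_1(u),\rho_2(u),q(u))$, which is therefore a plot of $\p_Z$. This is exactly the content of $E$ being smooth, so $\mathrm{comp}$ is smooth. The only subtle point, and the main place one can slip, is the double use of the diagonal: one must be careful that the two plots produced by the functional-diffeology definition live on $O \times O$ rather than on $O$, and then use stability under composition with the smooth diagonal embedding to descend to $O$. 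Beyond this bookkeeping, the argument is a direct unwinding of the definitions.
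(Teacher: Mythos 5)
Your argument is correct: the paper states this proposition without proof (deferring to \cite{Igdiff}), and your route --- reducing via the Cartesian closedness of Proposition \ref{cvar} to the smoothness of the evaluation-type map $E$, then applying the functional-diffeology condition twice and restricting to the diagonal --- is exactly the standard argument. The diagonal step you flag as the delicate point is indeed legitimate precisely because of the stability of plots under precomposition with smooth maps between Euclidean domains (the third axiom of Definition \ref{d:diffeology}), so there is no gap.
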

		
			Now, given an algebraic structure, we can define a
		corresponding compatible diffeological (resp. Fr\"olicher) structure, see 
		for instance \cite{Les}. For example, see \cite[pp. 66-68]{Igdiff},
		if $\R$ is equipped with its canonical diffeology (resp. Fr\"olicher
		structure), we say that an $\R-$vector space equipped with a 
		diffeology (resp. Fr\"olicher structure) is a diffeological (resp. Fr\"olicher) vector space if addition and 
		scalar multiplication are smooth. We state:

		\begin{Definition}
			Let $G$ be a group equipped with a diffeology (Fr\"olicher 
			structure). We call it a \textbf{diffeological (Fr\"olicher) group} 
			if both multiplication and inversion are smooth.
		\end{Definition}
	
	The same way, one can define diffeological fields, rings, algebras, groupoids, actions and so on.
	
	\subsection{The principle of pull-back on parametrizations}
	We explain here, with non-categorical vocabulary, some of the constructions that are necessary for the difinition of a \textbf{colimit,} which is often used to define geometric objects on a diffeology. We perform this contruction under three examples: the internal or kinematic tangent space, the Riemannian metric and the Haussdorf volume.  

One has to remark that the plots of a diffeology are behaved like charts on a manifold through the property \ref{d:local} in Definition \ref{d:diffeology}. The notion of parametrization splits with the notion of chart at the point where a chart defines an open neighborhood of each point of its image set, which is not the case for a plot of a given diffeology. Moreover, for a given diffeological space $(X,\p),$ the diffeology carries a notion of a given plot dimension in two ways:
\begin{itemize}
	\item  by the dimension of the domain of the plot $p,$
	\item and by the filtration of diffeologies defined in Theorem \ref{th:dim}:
	$$\p_1 \subset \p_2 \subset \cdots \subset \p.$$ 
\end{itemize}
Indeed, rephrasing the definition of $\p_n, $ a plot $p \in \p$ lies in $\p_n$ if there exists $g \in \p_\infty(\R^n)$ and $h$ a smooth map from $(\R^n,\p_\infty(\R^n))$ to $(X,\p)$ such that $p = h \circ g.$ 

Now, let us consider a smooth manifold $M$ equipped with its nebulae diffeology. Some geometric constructions (differential forms, Riemannian metrics, etc.) can be pulled back from $M$ to the domain of any plot $p.$ Therefore, there is a \emph{pull-back principle} that we can apply to define the same contructions on the domain of any plot of a diffeology $\p$ provided that they are stable under pull-back via composition of smooth maps. Let us develop in few first examples how basic  constructions on finite dimnsional manifolds extend to diffeological spaces through this principle.
\subsubsection{The kinematic (or internal) tangent cone}\label{s:internal tangent}
We describe here the construction first established in \cite{Les} for diffeological groups, in \cite{DN2007-1} for Fr\"olicher spaces, and extended in \cite{CW} for diffeologies, in a categorical vocabulary. Our {}{aim} is to express the same construction without the assumption that the reader is familiar with the categorical framework. 

Let $(X,\p)$ be a diffeological space. The domain $D(p)$ of each plot $p$ of $\p$ can be considered as a smooth manifold, and first objects of interest are tangent vectors in the tangent space $TD(p).$ They are understood as germs of smooth path $\frac{d\gamma}{dt}|_{t=0}$ where $\gamma \in C^\infty(\R,D(p)).$  Let $x \in X$ and let us consider 
$$\mathcal{C}_x = \left\{ \gamma \in C^\infty(\R,X) \, | \, \gamma(0)=x\right\}.$$ For each $p \in \p,$ we also define
$$\mathcal{C}_{x,p} = \left\{\gamma \in C^\infty(\R,D(p)) \, | \, p \circ \gamma(0) = x\right\}.$$ This set of smooth paths passing at $x$ enables to define the kinematic set 

$$\mathcal{K}_x = \coprod_{p \in \p} \left\{ \frac{d \gamma}{dt}|_{t=0} \, | \, \gamma \in \C_{x,p} \right\} = \coprod_{p \in \p} \coprod_{x_0 \in p^{-1}(x)} T_{x_0}D(p).$$

Therefore, we identify $(X_1,X_2) \in \mathcal{K}_x^2, $ where $X_1 = \frac{d \gamma_1}{dt}|_{t=0} \in T_{x_1}D(p_1)$ and  $X_2 = \frac{d \gamma_2}{dt}|_{t=0} \in T_{x_2}D(p_2)$ if there exists a $p_{3} \in \p$ and $(\gamma_{3,1},\gamma_{3,2}) \in C_{x,p_3}$ such that
\begin{equation} \label{id-germs}  \left\{\begin{array}{l} \forall i \in \{1,2\}, p_i \circ \gamma_i = p_{3,i} \circ \gamma_{3,i} \\
	\frac{d\gamma_{3,1}}{dt}|_{t=0} = \frac{d\gamma_{3,2}}{dt}|_{t=0}  
	\end{array}\right.\end{equation}
This identification is reflexive, symmetric, but not transitive, as is shown in the following counter-example:
\begin{ex} \label{spagh}
	Let $$ X= \{(x,y,z) \in \R^3 \, | \, yz = 0\}.$$
	We equip $X$ with its subset diffeology, inherited from the nebulae diffeology of $\R^3.$ Let us conside the paths $$\gamma_1(t) = (t,t^2,0)$$ and $$\gamma_2(t)=(t,0,t^2).$$
	The natural intuition (which will be shown to have a defect later in the exposition, for another diffeology on $X$) says that $$\frac{d\gamma_{1}}{dt}|_{t=0} = \frac{d\gamma_{2}}{dt}|_{t=0} = (1,0,0),$$
	 \emph{but there is no parametrization $p_3$ at $x=(0,0,0)$ that fulfills (\ref{id-germs}).}
	Indeed, the diffeology of $X$ is generated by the push-forwards of the plots of $\R^2$ to $X$ by the maps $(x,y) \mapsto (x,y,0) $ and $(x,y \mapsto (x,0,y).$ In order to identify the two germs, one has to consider an intermediate path $\gamma_{1.5}(t) = (t,0,0).$ 
		   
\end{ex}
 Therefore, we define 
 
 \begin{Definition}
 	We define the equivalence relation $\sim$ on $\mathcal{K}_x$ as follows: 
 	$\forall (X_1,X_2) \in \mathcal{K}_x, u_1 \sim u_2$ if and only if one of the two following conditions is fulfilled: 
 	\begin{enumerate}
 		\item $$\exists (\gamma_1,\gamma_2) \in \left(\coprod_{p \in \p}\C_{x,p}\right)^2,  \frac{d\gamma_{1}}{dt}|_{t=0} = u_1 \in TD(p_1), \frac{d\gamma_{2}}{dt}|_{t=0} = u_2 \in TD(p_2)$$ and also $p_{3} \in \p$ and $(\gamma_{3,1},\gamma_{3,2}) \in C_{x,p_3}$ such that condition (\ref{id-germs}) applies. 
 		\item there exists a finite sequence $(v_1,...,v_k)\in \mathcal{K}_x^k$ such that $v_1 = u_1,$ $v_k = u_2,$ and such that condition (1) applies to $v_i$ and $v_{i+1}$ for each index $i \in \N_{k-1}.$
 	\end{enumerate} 

 \end{Definition}
{\begin{Definition}
	The \emph{internal} or \emph{kinetic tangent cone} of $X$ at $x \in X$ is defined by $${}^iT_xX = \mathcal{K}_x/\sim,$$
	The space ${}^iT_xX$ is endowed by the push-forward of the functional diffeology on $\mathcal{C}_x.$
	\end{Definition}}
 The elements of ${}^iT_xX$ are called \emph{germs of paths} on $X$ at $x.$ In fact, we will produce later in our exposition other tangent spaces, so that we avoid the terminology ``tangent vector'' which {may be misleading}.   
	\subsubsection{Differential forms and de Rham complex}
	\begin{Definition} \cite{Sou}
		Let $(X,\p)$ be a diffeological space and let $V$ be a vector space equipped with a differentiable structure. 
		A $V-$valued $n-$differential form $\alpha$ on $X$ (noted $\alpha \in \Omega^n(X,V))$ is a map 
		$$ \alpha : \{p:O_p\rightarrow X\} \in \p \mapsto \alpha_p \in \Omega^n(p;V)$$
		such that 
		
		$\bullet$ Let $x\in X.$ $\forall p,p'\in \p$ such that $x\in Im(p)\cap Im(p')$, 
		the forms $\alpha_p$ and $\alpha_{p'}$ are of the same order $n.$ 
		
		$\bullet$ Moreover, let $y\in O_p$ and $y'\in O_{p'}.$ If $(X_1,...,X_n)$ are $n$ germs of paths in 
		$Im(p)\cap Im(p'),$ if there exists two systems of $n-$vectors $(Y_1,...,Y_n)\in (T_yO_p)^n$ and $(Y'_1,...,Y'_n)\in (T_{y'}O_{p'})^n,$ if $p_*(Y_1,...,Y_n)=p'_*(Y'_1,...,Y'_n)=(X_1,...,X_n),$
		$$ \alpha_p(Y_1,...,Y_n) = \alpha_{p'}(Y'_1,...,Y'_n).$$
		
		We note by $$\Omega(X;V)=\oplus_{n\in \mathbb{N}} \Omega^n(X,V)$$ the set of $V-$valued differential forms.  
	\end{Definition}
	
	With such a definition, we feel the need to make two remarks for the reader:
	
	$\bullet$ If there does not exist $n$ linearly independent vectors $(Y_1,...,Y_n)$
	defined as in the last point of the definition, $\alpha_p = 0$ at $y.$
	
	$\bullet$ Let $(\alpha, p, p') \in \Omega(X,V)\times \p^2.$ 
	If there exists $g \in C^\infty(D(p); D(p'))$ (in the usual sense) 
	such that $p' \circ g = p,$ then $\alpha_p = g^*\alpha_{p'}.$ 
	
	\vskip 12pt
	\begin{Proposition}
		The set $\p(\Omega^n(X,V))$ made of maps $q:x \mapsto \alpha(x)$ from an open subset $O_q$ of a 
		finite dimensional vector space to $\Omega^n(X,V)$ such that for each $p \in \p,$ $$\{ x \mapsto \alpha_p(x) \} \in C^\infty(O_q, \Omega^n(O_p,V)),$$
		is a diffeology on $\Omega^n(X,V).$  
	\end{Proposition}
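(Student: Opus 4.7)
My plan is to verify the three defining axioms of a diffeology for $\p(\Omega^n(X,V))$ directly, using the classical notion of smoothness for maps from open subsets of Euclidean spaces into the target vector spaces $\Omega^n(O_p,V)$. The key observation is that for each fixed $p\in\p$, the space $\Omega^n(O_p,V)$ is a standard space of smooth $V$-valued $n$-forms on the open Euclidean set $O_p$, endowed with its usual locally convex (Fréchet-type) topology, and smoothness of maps $O_q \to \Omega^n(O_p,V)$ in the statement is meant in this classical sense. Thus the candidate diffeology is specified by requiring a whole family of classical smoothness conditions indexed by the plots of $\p$.

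For the constant parametrizations axiom, if $q(x)=\alpha$ is a constant map into $\Omega^n(X,V)$, then for every $p\in\p$ the induced map $x\mapsto \alpha_p$ is constant into $\Omega^n(O_p,V)$, hence smooth. For the locality axiom, consider a family $\{q_i : O_i \to \Omega^n(X,V)\}_{i\in I}$ lying in the candidate diffeology, with $\bigcup_i O_i = O_q$ and compatible extension $q$; for each $p\in\p$, the map $x\mapsto q(x)_p$ restricted to each $O_i$ is smooth by assumption, and since classical smoothness into $\Omega^n(O_p,V)$ is a local property on the source, $x\mapsto q(x)_p$ is smooth on all of $O_q$. Hence $q$ belongs to the candidate diffeology.

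For the smooth reparametrization axiom, suppose $q \in \p(\Omega^n(X,V))$ and $g:O'\subset \R^m \to O_q$ is smooth in the classical sense. Then for every $p\in\p$, $(q\circ g)(x)_p = q(g(x))_p$ is the composition of the smooth map $g$ between open Euclidean sets with the smooth map $y\mapsto q(y)_p \in \Omega^n(O_p,V)$. Since both $O'$ and $O_q$ are finite-dimensional, this composition falls under the classical chain rule for smooth maps with values in the locally convex space $\Omega^n(O_p,V)$, so it is smooth, which establishes that $q\circ g \in \p(\Omega^n(X,V))$.

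There is no serious obstacle: the proof is a routine unwinding of definitions, and once one accepts that smoothness into $\Omega^n(O_p,V)$ is a well-behaved classical notion (stable under restriction, gluing, and composition with finite-dimensional smooth maps), the three axioms are automatic. The only point deserving a brief mention is that the three properties transfer from the target spaces $\Omega^n(O_p,V)$, uniformly in $p$, to the product-type data $(\alpha_p)_{p\in\p}$ defining a differential form on $X$; this uniformity is tautological here because the diffeology is defined as the intersection of the conditions indexed by $p$.
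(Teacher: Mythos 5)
Your verification is correct. The paper itself states this Proposition without proof, treating it as a routine unwinding of the three axioms of Definition \ref{d:diffeology}, and your argument is exactly that expected verification: constants map to constants, classical smoothness into the locally convex space $\Omega^n(O_p,V)$ is local on a finite-dimensional source, and precomposition with a smooth map between Euclidean domains preserves it, all uniformly (indeed tautologically) over the index set $\p$. The one point you rightly flag --- that $C^\infty(O_q,\Omega^n(O_p,V))$ must be read with a fixed well-behaved notion of smoothness into the Fr\'echet space $\Omega^n(O_p,V)$, for which locality, gluing and the chain rule hold --- is the only content of the statement, and your treatment of it is adequate.
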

	
	Working on plots of the diffeology, one can define the product and the differential of differential forms, which have the same properties as  
	the product and the differential of differential forms. 
	
	\begin{Definition}
		Let $(X,\p)$ be a diffeological space. 
		
		\noindent
		$\bullet$ $(X,\p)$ is \textbf{finite-dimensional} if and only if $$\exists n_0\in\mathbb{N},\quad \forall n\in \mathbb{N}, \quad n\geq n_0 \Rightarrow \operatorname{dim}(\Omega^n(X,\mathbb{R}))=0.$$
		Then, we set $$\operatorname{dim}(X,\p)=\max\{n\in \mathbb{N}| \operatorname{dim}(\Omega^n(X,\mathbb{R}))>0\}.$$
		\noindent
		$\bullet$ If not, $(X,\p)$ is called \textbf{infinite dimensional}.
	\end{Definition}
	
	Let us make a few remarks on this definition.
	If $X$ is a manifold with $\operatorname{dim}(X)=n$ {(in the classical sense)  the nebulae diffeology $\p_\infty(X)$} is such that {$$\operatorname{dim}(X,\p_\infty(X))=n.$$}
	Now, if $(X,\F,\C)$ is the natural Fr\"olicher structure on $X,$ take $\p_1$ generated by the maps of the type
	$g\circ c$, where $c\in \C$ and $g$ is a smooth map from an open subset of a finite dimensional space to $\mathbb{R}.$
	This is an easy exercise to show that $$\operatorname{dim}(X,\p_1)=1.$$
	This first point shows that the dimension depends on the diffeology considered. Now, we remark that $\F$
	is the set of smooth maps $(X,\p_1)\rightarrow \mathbb{R},$

	This leads to the following definition, since $\p(\F)$ is clearly the diffeology with the biggest dimension associated to $(X,\F,\C)$:
	
	\begin{Definition}
		The \textbf{dimension} of a Fr\"olicher space $(X,\F,\C)$ is the dimension of the diffeologial space $(X,\p(\F)).$
	\end{Definition}

	This definition totally fits with the following: 
	\begin{Definition}
		Let $(X,\p)$ be a diffeological space {which is not totally disconnected, that is, for which $\p$ has non-constant plots.} We define the dimension of $(X,\p)$ as 
		$$\operatorname{dim}(X,\p) = min \left\{ n \in \N^* \, | \, {}{\p_n} = \p \right\}.$$
	\end{Definition}

	\subsubsection{(Pseudo-)Riemannian metrics {on diffeological spaces}} \label{s:rmdiff}
	We now go {to} the extension of the basic structures of Riemannian manifolds to diffeological spaces.

	\begin{Definition} \label{d:RM}
		Let $(X,\p)$ be a diffeological space.
		An \textbf{internal Riemannian metric} $g$ on $X$ (noted $g \in Met(X)$) is a map 
		$$  g: \{p:O_p\rightarrow X\} \in \p \mapsto g_p$$
		such that 
		
		$\bullet$ $x \in O_p \mapsto g_p(x)$ is a (smooth) metric on $TO_p$  
		
		$\bullet$ Moreover, let $y\in O_p$ and $y'\in O_{p'}$ such that $p(y)=p'(y').$ If $(X_1,X_2)$ is a couple of germs of paths in 
		$Im(p)\cap Im(p'),$ if there exists two systems of $2-$vectors $(Y_1,Y_2)\in (T_yO_p)^2$ and $(Y'_1,Y'_2)\in (T_{y'}O_{p'})^2,$ if $p_*(Y_1,Y_2)=p'_*(Y'_1,Y'_2)=(X_1,X_2),$
		$$ g_p(Y_1,Y_2) = g_{p'}(Y'_1,Y'_2).$$
		
		\noindent
		$(X,\p,g)$ is a \textbf{internal Riemannian diffeological space} if $g$ is a metric on $(X,\p).$ 
	\end{Definition}
	For any germ of path $X$ we note $||X|| = \sqrt{g(X,X)}$ This notation is independent on the chosen plot of the diffeology.
	\begin{Definition}
		We call \textbf{arc length} the map  $L : C^\infty([0;1], X) \rightarrow \mathbb{R}_+$ defined by $$L(\gamma) = \int_0^1 ||\dot\gamma(t)|| dt.$$
		Let $(x, y) \in X^2.$ We define $$ d(x,y) = \inf \left\{ L(\gamma) | \gamma(0) = x \wedge \gamma(1)=y\right\}$$
		and we call \textbf{Riemannian pseudo-distance} the map $d : X \times X \rightarrow \R_+$ that we have just described. 
	\end{Definition}
	
	The following proposition justifies the terms ``pseudo-distance'': 
	\begin{Proposition} \cite{Ma2019}
		\begin{enumerate}
			\item \label{d1} $\forall x \in X, d(x,x) = 0.$
			\item \label{d2} $\forall (x,y) \in X^2, d(x,y) = d(y,x)$
			\item \label{d3} $\forall (x,y,z \in X^3, d(x,z) \leq d(x,y) + d(y,z).$  
		\end{enumerate}
	\end{Proposition}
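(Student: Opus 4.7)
The plan is to verify the three claims in order, with (\ref{d3}) being the only substantive step. For (\ref{d1}), I take $\gamma$ to be the constant path at $x$, which is smooth because constant maps belong to every diffeology. Its velocity vanishes in any plot through which it factors, so $L(\gamma) = 0$; since $L$ is non-negative and $d(x,x)$ is an infimum of such values, (\ref{d1}) follows at once.

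For (\ref{d2}), I observe that $\gamma \mapsto \tilde\gamma$ with $\tilde\gamma(t) = \gamma(1-t)$ is a smoothness-preserving involution mapping paths from $x$ to $y$ bijectively onto paths from $y$ to $x$. Since $g$ is bilinear and the velocity of $\tilde\gamma$ at $t$ is the negative of the velocity of $\gamma$ at $1-t$ (as read in any plot), $||\dot{\tilde\gamma}(t)|| = ||\dot\gamma(1-t)||$, and the change of variables $s = 1-t$ yields $L(\tilde\gamma) = L(\gamma)$. Taking infima gives (\ref{d2}).

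The main obstacle lies in (\ref{d3}): given smooth paths $\gamma_1$ from $x$ to $y$ and $\gamma_2$ from $y$ to $z$, their naive juxtaposition is typically not smooth at the joining point, and diffeological smoothness has to be recovered by hand. The remedy is to pre-compose each $\gamma_i$ by a smooth non-decreasing surjection $\phi : [0,1] \to [0,1]$ with $\phi(0)=0$, $\phi(1)=1$ and all derivatives of $\phi$ vanishing at both endpoints. By the composition axiom of Definition \ref{d:diffeology}, the reparametrized paths $\tilde\gamma_i = \gamma_i \circ \phi$ remain smooth and are constant on neighborhoods of $0$ and $1$. The scaled concatenation $\gamma$ defined by $\gamma(t) = \tilde\gamma_1(2t)$ on $[0,1/2]$ and $\gamma(t) = \tilde\gamma_2(2t-1)$ on $[1/2,1]$ is then globally smooth, because the two halves coincide with the constant $y$ on a common neighborhood of $1/2$ and smoothness in the diffeological sense follows from the locality axiom.

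Since reparametrization by $\phi$ preserves $L$ (by the chain rule in any plot combined with a change of variables in the integral) and the integral of $||\dot\gamma||$ is additive over $[0,1/2]$ and $[1/2,1]$, one obtains $L(\gamma) = L(\gamma_1) + L(\gamma_2)$. Given $\eps > 0$, choosing each $\gamma_i$ with $L(\gamma_i)$ within $\eps/2$ of the corresponding infimum yields $d(x,z) \leq L(\gamma) \leq d(x,y) + d(y,z) + \eps$, and letting $\eps \to 0$ proves (\ref{d3}).
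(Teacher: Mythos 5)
Your proof is correct and follows the standard argument that the paper itself defers to \cite{Ma2019}: constant paths for (\ref{d1}), time reversal for (\ref{d2}), and concatenation after a boundary-flattening reparametrization for (\ref{d3}). The only point to tighten is that for the two halves of the concatenation to coincide with the constant path at $y$ on a common neighborhood of $1/2$ (so that the locality axiom applies), you need $\phi$ to be locally constant near $0$ and $1$ --- for instance the primitive of a bump function supported in the open interval --- which is slightly stronger than merely requiring all derivatives of $\phi$ to vanish at the endpoints.
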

	
	
	One could wonder whether $d$ is a distance or not, i.e. if we have the stronger property: 
	$$ \forall (x,y) \in X^2, d(x,y)=0 \Leftrightarrow x=y.$$
	Unfortunately, it seems to appear in examples arising from infinite dimensional geometry that there can 
	have a distance which equals to 0 for $x \neq y.$ This is what is described on a weak Riemannian metric 
	of a space of proper immersions in the work of Michor and Mumford \cite{MichMum}. 
	Moreover, the D-topology is not the topology defined by the pseudo-metric $d.$ All these facts, which show 
	that the situation on Riemannian Fr\"olicher spaces is very different from the one known on manifolds, are checked in the following (toy) example.
	
	\begin{rem}
		Let $Y = \coprod_{i \in \N^*} \R_i$ where $\R_i$ is the $i-$th copy of $\R,$ equipped with its natural scalar product.
		Let $\mathcal{R}$ be the equivalence relation $$x_i \mathcal{R} x_j \Leftrightarrow  \left\{ \begin{array}{l}(x_i \notin ]0;\frac{1}{i}[ \wedge x_j \notin ]0;\frac{1}{j}[) \Rightarrow \left\{ \begin{array}{lr} x_i = x_j & \hbox{if } x_i\leq 0 \\
				x_i + 1- \frac{1}{i}  = y_j + 1 - \frac{1}{j} & \hbox{ if } x_i \geq \frac{1}{i} \end{array} \right. \\ (x_i \in ]0;\frac{1}{i}[ \vee x_j \in ]0;\frac{1}{j}[) \Rightarrow i = j \wedge x_i = x_j  \end{array} \right. $$
		Let $X = Y / \mathcal{R}.$ This is obviously a 1-dimensional Riemannian diffeological space. Let $\bar{0}$ be the class of $0 \in\R_1,$ and let $\bar{1}$ 
		be the class of $1 \in \R_1.$  Then $d(\bar{0}, \bar{1}) = 0.$ This shows that $d$ is not a distance on $X.$  In the $D-$topology, $\bar{0}$ and $\bar{1}$ respectively have the following disconnected neighborhoods: 
		$$U_{\bar{0}} = \left\{ \bar{x_i} | x_i < \frac{1}{2i} \right\}$$
		and $$U_{\bar{1}} =\left\{\bar{x_i} | x_i > \frac{1}{2i} \right\}.$$
		This shows that $d$ does not define the $D-$topology.  
	\end{rem}

	\subsection{Group of diffeomorphisms and linear group: two examples of diffeological groups}
	Let us start with the generalization of the notion of diffeomorphism \cite{Sou}.
	 \begin{Definition}
	 	Let $(X,\p)$ be a diffeological space. A (set theoric) bijection on $X$ is a \emph{diffeomorphism} if both $f$ and $f^{-1}$ are smooth. We note by $\operatorname{Diff}(X)$ the group of diffeomorphisms of $X.$
	 \end{Definition}
 \begin{Proposition}
 	Let us equip $C^\infty(X,X)$ with its functional diffeology $\p.$ We define the maps
 	$$ i : f \in \operatorname{Diff}(X) \mapsto f\in C^\infty(X,X)$$
 	and 
 	$$ (.)^{-1} :  f \in \operatorname{Diff}(X) \mapsto f^{-1}\in C^\infty(X,X).$$ 
 	Then $(\operatorname{Diff}(X), i^*(\p)\cap ((.)^{-1})^*(\p))$ is a diffeological group.
 \end{Proposition}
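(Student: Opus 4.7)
My plan is to first observe that the proposed structure on $\operatorname{Diff}(X)$ is indeed a diffeology, and then verify the two group-theoretic smoothness axioms by exploiting the symmetric way in which $i$ and $(\cdot)^{-1}$ enter the definition.

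The first step is essentially routine: both $i^*(\p)$ and $((\cdot)^{-1})^*(\p)$ are diffeologies on $\operatorname{Diff}(X)$ by the general pullback construction discussed above, and the intersection of an arbitrary family of diffeologies on a fixed set is again a diffeology (constant parametrizations belong to every diffeology, and the locality and smooth-composition axioms are preserved under intersection). Unpacking the definitions, a parametrization $\rho:O\to\operatorname{Diff}(X)$ is a plot of $i^*(\p)\cap((\cdot)^{-1})^*(\p)$ if and only if both $i\circ\rho$ and $(\cdot)^{-1}\circ\rho$ are plots of the functional diffeology $\p$ on $C^\infty(X,X)$; equivalently, by definition of the functional diffeology, both the maps $(u,x)\mapsto \rho(u)(x)$ and $(u,x)\mapsto \rho(u)^{-1}(x)$ are plots of the diffeology on $X$ when restricted appropriately. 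This characterization will drive the rest of the argument.

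For smoothness of inversion $\operatorname{inv}:\operatorname{Diff}(X)\to\operatorname{Diff}(X)$, I would take any plot $\rho$ of the intersection diffeology and show $\operatorname{inv}\circ\rho$ is again such a plot. But the two conditions to verify are exactly
\[
i\circ(\operatorname{inv}\circ\rho)=(\cdot)^{-1}\circ\rho\quad\text{and}\quad(\cdot)^{-1}\circ(\operatorname{inv}\circ\rho)=i\circ\rho,
\]
and both are plots of $\p$ by the hypothesis on $\rho$. So inversion is smooth essentially by construction, since the intersection was set up precisely to enforce this symmetry.

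For smoothness of multiplication $m:\operatorname{Diff}(X)\times\operatorname{Diff}(X)\to\operatorname{Diff}(X)$, given plots $\rho_1,\rho_2$ of the intersection diffeology, I need to check the two conditions for $m\circ(\rho_1,\rho_2):u\mapsto \rho_1(u)\circ\rho_2(u)$. The first, $i\circ m\circ(\rho_1,\rho_2)$, is the composition $(u\mapsto\rho_1(u))\cdot(u\mapsto\rho_2(u))$ in $C^\infty(X,X)$, which is a plot of $\p$ by the preceding proposition on smoothness of composition for functional diffeologies. The second is the map $u\mapsto(\rho_1(u)\circ\rho_2(u))^{-1}=\rho_2(u)^{-1}\circ\rho_1(u)^{-1}$; since $(\cdot)^{-1}\circ\rho_i$ are plots of $\p$ by hypothesis, the same smoothness-of-composition result handles this case as well. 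I do not expect a serious obstacle here: the whole argument is a ``bookkeeping'' verification that the intersection diffeology is the coarsest one making both $i$ and $(\cdot)^{-1}$ smooth, so that the group operations automatically inherit smoothness from the smoothness of composition in $C^\infty(X,X)$.
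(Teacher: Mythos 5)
The paper states this Proposition without proof, so there is nothing to compare against line by line; your verification is correct and is clearly the intended argument: the intersection $i^*(\p)\cap((\cdot)^{-1})^*(\p)$ is a diffeology whose plots are exactly the parametrizations $\rho$ with both $u\mapsto\rho(u)$ and $u\mapsto\rho(u)^{-1}$ plots of $\p$, inversion is smooth by the built-in symmetry of this condition, and multiplication is smooth by the stated smoothness of composition for functional diffeologies applied to both $\rho_1(u)\circ\rho_2(u)$ and $\rho_2(u)^{-1}\circ\rho_1(u)^{-1}$. The only cosmetic point is your closing remark calling the intersection the ``coarsest'' diffeology making $i$ and $(\cdot)^{-1}$ smooth: it is the \emph{largest} such diffeology (maximal for inclusion), which in this paper's stated convention (``coarsest, i.e.\ minimal for inclusion'') would be termed the finest; this does not affect the argument.
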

 
 \begin{rem}
 	When $X$ is a compact boundaryless manifold, the group $\operatorname{Diff}(X)$ is an open submanifold of $C^\infty(X,X)$ 
 	in the sense of e.g. \cite{Om}, that we will precise later. We anticipate this construction in order to point out here that in geenral, 
 	$\operatorname{Diff}(X)$ does \emph{not} have the subset diffeology 
 	from $C^\infty(X,X), $
 	 while it is the case when $X$ is a compact 
 	 boundaryless manifold,
 	  which is the most widely studied setting 
 	  in the actual literature.
 \end{rem}
Let us now assume that $X$ is a diffeological vector space with field of scalars a diffeological field $\mathbb{K}.$ 
\begin{Proposition}
	\begin{enumerate}
		\item The space $\mathcal{L}(X)$ of smooth linear maps from $X$ to $X$ is an algebra {with smooth addition, smooth multiplication and smooth scalar multiplication.}
		\item The group of invertible elements of $\mathcal{L}(X),$ that we note by $GL(X),$ is a diffeological subgroup of $\operatorname{Diff}(X).$ 
	\end{enumerate}
\end{Proposition}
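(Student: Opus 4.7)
I would verify the three algebra operations one at a time by testing them against the definition of the functional diffeology on $\mathcal{L}(X)$ (inherited as a subset of $C^\infty(X,X)$), combined with the Cartesian closedness given in Proposition \ref{cvar}. Concretely, a parametrization $u \mapsto h_u$ of $\mathcal{L}(X)$ is a plot iff the map $(u,x) \mapsto h_u(x)$ is smooth as a plot of $X$. Given two plots $u \mapsto f_u$ and $u \mapsto g_u$, the sum corresponds to $(u,x) \mapsto f_u(x)+g_u(x)$, which factors through $X\times X \to X$; smoothness of this addition is part of the diffeological vector space assumption on $X$, so the sum is a plot, and pointwise it is manifestly linear. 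The same template handles scalar multiplication via the smooth map $\mathbb{K}\times X \to X$. Composition is smooth by the proposition immediately preceding, and $f\circ g$ is linear whenever $f$ and $g$ are. This exhausts the algebra axioms together with the required smoothness.

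\textbf{Plan for (2).} First I would check the set-theoretic inclusion $GL(X)\subset \operatorname{Diff}(X)$: any $f\in GL(X)$ is by definition invertible in $\mathcal{L}(X)$, so both $f$ and $f^{-1}$ are smooth linear bijections of $X$, which is precisely the definition of a diffeomorphism. Then I would endow $GL(X)$ with the subset diffeology inherited from $\operatorname{Diff}(X)$. Closure under multiplication and under inversion is immediate at the algebraic level (the composition of invertible linear maps is invertible, and so is the inverse), and smoothness of these two group operations on $GL(X)$ follows simply by restricting the smooth structure of $\operatorname{Diff}(X)$ established in the preceding proposition. The inclusion $GL(X)\hookrightarrow \operatorname{Diff}(X)$ is smooth by construction, so this gives a diffeological subgroup.

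\textbf{Main obstacle.} The real subtlety, I expect, is not any of the routine smoothness checks but rather the choice of diffeology on $GL(X)$. If one tried to use the subset diffeology inherited directly from $\mathcal{L}(X)$, then inversion need not be smooth at all, since there is no general bounded-inverse or open-mapping theorem beyond a Banach setting; this is essentially the reason, flagged in the preceding remark, why $\operatorname{Diff}(M)$ does not carry the subset diffeology from $C^\infty(M,M)$ when $M$ is non-compact. The construction $i^*(\p)\cap ((.)^{-1})^*(\p)$ on $\operatorname{Diff}(X)$ encodes smoothness of inversion into the very definition of the diffeology, and it is this design choice that makes the subgroup statement for $GL(X)$ go through without requiring any additional analytic hypothesis on $X$.
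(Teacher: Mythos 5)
The paper states this proposition without any proof, so there is nothing to compare against line by line; your argument is correct and is precisely the intended one: the functional (subset) diffeology on $\mathcal{L}(X)\subset C^\infty(X,X)$ together with Cartesian closedness reduces smoothness of addition, scalar multiplication and composition to the corresponding smooth operations on the diffeological vector space $X$ and to the smoothness of composition for functional diffeologies, while the diffeology $i^*(\p)\cap ((.)^{-1})^*(\p)$ on $\operatorname{Diff}(X)$ makes the subgroup claim for $GL(X)$ immediate once one notes that an inverse in the algebra $\mathcal{L}(X)$ is a smooth inverse map. Your closing remark correctly isolates the one genuine subtlety, namely that smoothness of inversion is built into the diffeology of $\operatorname{Diff}(X)$ by construction rather than obtained from any open-mapping or bounded-inverse theorem.
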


\begin{rem} \label{hypo}
	Following the recent review \cite{Glo2022}, to which we refer for a complete exposition, on topological vector spaces that are not normed spaces, multilinear mappings are merely hypocontinuous, and not continuous. This is the case, when $E$ and $F$ are locally convex complete topological vector spaces, for the evaluation map $$\mathcal{L}(E,F) \times E \rightarrow F.$$
	Switching to the (subset) functional diffeology of $\mathcal{L}(E,F),$ where $E$ and $F$ are understood as diffeological vector spaces, the same evaluation map remains smooth. 
\end{rem}
	Following Iglesias-Zemmour, see \cite{Igdiff}, we do not assert that 
arbitrary diffeological groups have associated Lie algebras; however, 
the following holds, see \cite[Proposition 1.6.]{Les} and 
\cite[Proposition 2.20]{MR2016}.

\begin{Proposition} 
	Let $G$ be a diffeological group. Then the {internal} tangent cone at the neutral 
	element {}{${}^iT_eG$} is a diffeological vector space.
\end{Proposition}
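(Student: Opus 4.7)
The goal is to equip ${}^iT_eG$ with a smooth scalar multiplication and a smooth addition, and to verify the vector space axioms; crucially, the group multiplication $m : G\times G \to G$ (smooth by definition of a diffeological group) together with the inversion map will play the role played by the abelian group law on a classical model space. The plan is to define the operations at the level of paths in $\mathcal{C}_e$, check compatibility with the equivalence relation $\sim$, and finally transfer smoothness through the pushforward diffeology from $\mathcal{C}_e$ to ${}^iT_eG$.

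First, I would define scalar multiplication by $\lambda\cdot[\gamma] := [t\mapsto \gamma(\lambda t)]$ for $\gamma\in\mathcal{C}_e$. Well-definedness follows because, if $\gamma$ factors as $p\circ c$ through a plot $p\in\p$ with $c\in C^\infty(\R,D(p))$, then $t\mapsto \gamma(\lambda t)$ factors through the same $p$ via $c(\lambda\,\cdot)$, and the chain rule shows that the two germs agree inside $TD(p)$; transitivity of $\sim$ handles the passage between different representing plots. Then I would define addition by using the group law:
\[
[\gamma_1] + [\gamma_2] \;:=\; \bigl[\,t\mapsto m(\gamma_1(t),\gamma_2(t))\,\bigr] \;=\; [\gamma_1\cdot\gamma_2].
\]
Well-definedness is the first genuine hurdle. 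If $[\gamma_i] = [\gamma_i']$ for $i=1,2$, one chains two instances of the defining relation (\ref{id-germs}): using the smoothness of $m$, a common refining plot $p_3$ for each pair is produced by composing the refining plots of the factors with $m$, and the 2-plot $F:(s,t)\mapsto m(\gamma_1(s),\gamma_2(t))$ provides the bridge through which the germs of $s\mapsto m(\gamma_1(s),\gamma_2(0))$ and $t\mapsto m(\gamma_1(0),\gamma_2(t))$ both coincide with the diagonal germ at $(0,0)$ modulo $\sim$.

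Next I would verify the vector space axioms. The zero vector is the class of the constant path at $e$; the additive inverse of $[\gamma]$ is $[t\mapsto \gamma(t)^{-1}]$, smooth because inversion is smooth in $G$; associativity and distributivity follow from 3-plots of the form $(s,t,u)\mapsto \gamma_1(s)\gamma_2(t)\gamma_3(u)$ and from the chain rule applied to reparametrizations. The most delicate axiom is \emph{commutativity} when $G$ is non-abelian. Here I would exploit the 2-plot $F(s,t) := \gamma_1(s)\gamma_2(t)$: both paths $s\mapsto F(s,0)$ and $t\mapsto F(0,t)$ lift into $TD(F) \cong \R^2$ at $(0,0)$, so their classes in ${}^iT_eG$ coincide with $[\gamma_1]$ and $[\gamma_2]$ respectively via $\sim$; the diagonal $t\mapsto F(t,t)$ then yields $[\gamma_1]+[\gamma_2]$ while the ``reverse diagonal'' construction with $\tilde F(s,t)=\gamma_2(s)\gamma_1(t)$ yields $[\gamma_2]+[\gamma_1]$; a common 2-plot (obtained by concatenating through a product plot in $G\times G$ composed with $m$) identifies the two through $\sim$.

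Finally, smoothness of $+$ and scalar multiplication on ${}^iT_eG$ follows from Proposition~\ref{cvar} and the smoothness of $m$ and inversion: the maps $(\gamma_1,\gamma_2)\mapsto \gamma_1\cdot\gamma_2$ and $(\lambda,\gamma)\mapsto \gamma(\lambda\,\cdot)$ are smooth for the functional diffeology on $\mathcal{C}_e$, and by the very definition of the diffeology on ${}^iT_eG$ (the pushforward of the functional diffeology along $\gamma\mapsto \frac{d\gamma}{dt}|_{t=0}$), these smooth operations descend to smooth operations on the quotient. I expect the principal obstacle to be the well-definedness/commutativity package: transferring set-theoretic identities between paths into identifications modulo the non-transitive base relation whose transitive closure defines $\sim$, which forces the use of auxiliary 2- and 3-parameter plots built from $m$ to produce the required chain of elementary identifications.
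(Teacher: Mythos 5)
Your overall architecture is the one the paper itself points to: the text gives no proof of this proposition, deferring to Leslie and to Magnot--Reyes, and the only hint it offers (inside the proof of Proposition \ref{prop:pdiff}) is precisely your formula $X_1+X_2=\partial_t(\gamma_1\cdot\gamma_2)(0)$, with scalar multiplication by reparametrization and smoothness obtained from the functional diffeology together with the smoothness of multiplication and inversion. So your choice of operations is the intended one, and you correctly locate where the difficulty sits.

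The gap is that you never actually produce the witnesses that the equivalence relation demands at the points where they are needed. In this paper's internal tangent cone, two germs are identified only through a chain of elementary identifications (\ref{id-germs}), each of which requires a single plot $p_3$ through which both underlying paths in $G$ factor with lifts having equal derivatives at $0$. Your commutativity step must identify the germs of $t\mapsto\gamma_1(t)\gamma_2(t)$ and $t\mapsto\gamma_2(t)\gamma_1(t)$, which are genuinely different paths when $G$ is non-abelian; the phrase ``a common $2$-plot obtained by concatenating through a product plot in $G\times G$ composed with $m$'' does not name such a plot, and Example \ref{spagh2} shows that $\sim$ is fine enough that ``same derivative in some ambient sense'' is not, by itself, grounds for identification. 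The same unaddressed issue recurs in distributivity, where you need $[\,t\mapsto\gamma(\lambda t)\gamma(\mu t)\,]=[\,t\mapsto\gamma((\lambda+\mu)t)\,]$. The standard repair (and, as far as one can tell, Leslie's actual route) is to define the sum the other way around: the plot $F(s,t)=\gamma_1(s)\gamma_2(t)$, which exists because $m$ is smooth, realizes $[\gamma_1]=F_*(1,0)$ and $[\gamma_2]=F_*(0,1)$ simultaneously, and one sets $F_*(v)+F_*(w):=F_*(v+w)$ inside $T_{(0,0)}\R^2$. Commutativity and the identity $[\gamma_1]+[\gamma_2]=[\gamma_1\cdot\gamma_2]=[\gamma_2\cdot\gamma_1]$ then come for free from linearity in the domain of $F$, and the entire burden is concentrated in a single lemma you would still have to prove: that $F_*(v+w)$ is independent of the choice of plot realizing the two summands. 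As written, your proposal asserts the conclusion of that lemma at several places without proving it.
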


\noindent
{
	The proof of Proposition \ref{Les}
	appearing in \cite{MR2016} uses explicitly the diffeologies
	$\mathcal{P}_1$ and  $\mathcal{P}_\infty$ which appear in 
	Proposition 3 and Remark 4 of this work.
}

\begin{Definition}
	The diffeological group $G$ is a \textbf{diffeological Lie group} 
	if and only if 
	the Adjoint action of $G$ on the diffeological vector space 
	$^iT_eG$ defines a {smooth} Lie bracket, {that is, noting by $c$ a path in $C^\infty(\R,G)$ such that $c(0)=e$ and $\partial_t c(0) = X,$ and noting by $Y$ an element of the internal tangent space ${}^iT_eG,$ 
		\begin{itemize}
			\item $\partial_t\left(\operatorname{Ad}_{c(t)}Y\right)|_{t=0} = [X,Y]\in {}^iT_eG$
			\item $(X,Y)\mapsto [X,Y]$ is smooth.
		\end{itemize}. }
	In this case, we call $^iT_eG$ the Lie algebra of $G$ and we denote 
	it by $\mathfrak{g}.$
\end{Definition}

The question of criteria for a diffeological group to be a diffeological Lie group has been addressed in \cite{Les} for diffeological groups, and in \cite{Lau2011} for Fr\"olicher groups. 

\begin{rem}
	Actually there is no proven example of diffeological group which is not a diffeological Lie group.
\end{rem}

The basic properties of adjoint, coadjoint actions, and of Lie brackets, remain {}{on diffeological Lie groups} globally the same
as in the case of finite-dimensional Lie groups, and the proofs are similar: see \cite{Les} 
and \cite{DN2007-1} for details.

	\subsection{On various tangent spaces}\label{s:varTX}

After our exposition of the construction of the internal tangent cone, we 
{describe} other existing {generalizations of the notion of tangent space} to a given diffeological space $X.$ Indeed, 
for a finite dimensional manifold, {tangent spaces} carry so many {technical} aspects {of interest} that the notion of tangent space can be generalized by many ways. Some of them appear as non equivalent, while there is actually a shared notion of differential form and of cotangent space of a diffeological space. Let us describe briefly 
{various} notions of tangent space for a diffeological space, and let us compare them. 

\subsubsection{The external tangent space}
Let us first adapt a classical algebraic construction to the diffeological category. The set of (diffeological) derivations on a diffeological space $X,$ noted by $\mathfrak{der}(X),$ is the space of smooth maps from $C^\infty(X,\R)$ to itself which satisfies the Leibnitz rule. The same way, the (diffeological) pointwise derivations on $X$ are the smooth maps $d$ from $C^\infty(X,\R)$ to $\R$ such that  $$ \exists  x \in X, \forall (f,g )\in (C^\infty(X,\R))^2, d(fg) = f(x) d(g) + g(x) d(f).$$
\begin{Definition}
	The external tangent space ${}^eTX$ is defined as 
	the set of diffeological pointwise derivations on $X.$
\end{Definition}
As a set of smooth maps, it can be endowed with the functional diffeology, and fixing the point $x \in X,$ one defines ${}^eT_xX \subset {}^eTX .$ 
\subsubsection{At the intersection of the internal and the external tangent spaces}\label{s:ieTX}

This {approach} 
is the one chosen in \cite{Ma2013,Ma2018-2,MR2016}, {and refined in \cite{GW2020}. This leads to two objects of different type: a cone \cite{Ma2013,Ma2018-2,MR2016} which can be completed into a vector space \cite{GW2020} following exactly the same procedure as the one applied in \cite{CW}. This last construction is not described here: we do not want exhaustivity in our presentation.}  

	The main idea behind the definition of this tangent space {in \cite{Ma2013,Ma2018-2,MR2016,GW2020}} is based on the definition of a tangent space on smooth manifolds through paths.
	More precisely, we consider the set of all paths that maps $ 0 $ to $ x $ and identify two of these paths with each other if their derivative coincide in $ 0 $.
	Therefore, an element in $ T_xX $ is  an {equivalence} 
	class $ [c] $ for $ c\colon  \mathbb{R} \rightarrow X $ such that $ c(0) = x $.
	The directional derivative of a function  $ f\colon X \rightarrow \mathbb{R} $ in direction $ v $ is given by the derivative $ \tfrac{\partial (f \circ c)(t) }{\partial t}_{\mid_{t = 0}} $ for $ c\colon \mathbb{R} \rightarrow X $ such that $ c(0) = x $ and $ c '(0) = v $. {This approach is then applied to an arbitrary diffeological space to produce a tangent cone the following way:} 


\begin{Definition}
	 For each $x\in X,$ we consider 
	$$
	C_{x}=\{c \in C^\infty(\R,X)| c(0) = x\}
	$$ 
	and we take the equivalence relation $\mathcal{R}$ given by 
	$$
	c\mathcal{R}c' \Leftrightarrow \forall f \in C^\infty(X,\R), \partial_t(f \circ c)|_{t = 0} = 
	\partial_t(f \circ c')|_{t = 0}.
	$$ 
	
\end{Definition}
	Equivalence classes of $\mathcal{R}$ are called {\bf germs} in \cite{Ma2013,Ma2018-2,MR2016} and are 
denoted by $\partial_t c(0)$ or $\partial_tc(t)|_{t=0}$. In the same references,  
the {\bf internal tangent cone} at $x$ is the quotient 
$C_x / \mathcal{R}.$ If 
$X = \partial_tc(t)|_{t=0} \in {}^iT_X, $ we define the pointwise derivation 
$f \mapsto Df(X) = \partial_t(f \circ c)|_{t = 0}\, .$
The reader may compare this definition to the one appearing in 
\cite{KM} for manifolds in the ``convenient"  $c^\infty-$setting. The internal tangent cone defined in section \ref{s:internal tangent} trivially differs from this one, which is also a subset of ${}^eT_xX.$
We note this {``internal-external'' tangent cone by ${C}^{ie}T_xX.$}
\begin{ex}\label{ex:cross}
	{Let $$X= \left\{(x,y) \in \R^2 \, | \, xy=0\right\},$$
	equipped with its subset diffeology. Then, following \cite{Ma2020-3}, $$C^{ie}T_0X = X$$ which shows that the internal-external tangent space is not in general a vector space.}
	\end{ex}
\begin{ex} \label{spagh2}
	Let us consider the example given in \cite{CW2022} of $\R^2$ equipped with the 1-dimensional diffology $\p_1(\R^2),$ called the spaghetti diffeology in the above reference. For  reasons similar to the ones given in example \ref{spagh}, the internal tangent space of $\R^2$ equipped with the diffeology $\p_1(\R^2)$ is infinite dimensional, with uncountable dimension, while as a consequence of Bohman's theorem, ${C}^{ie}T_x\R^2$ is 2-dimensional vector space) 
\end{ex}

	Following \cite{GW2020} we {introduce the following terminology:}
	\begin{Definition}
		\label{def:PathDerivative}
		Let $X$ be a diffeological space and $c \in C_x$. The {path derivative}  in direction $c$ is defined by 
		\begin{align*}
			\label{PathDerivative}
			\mathrm{d}_{c}\colon  C^{\infty}(X,\mathbb{R}) \rightarrow \mathbb{R},\, f \mapsto
			\mathrm{d}_{c}(f) := \dfrac{\partial}{\partial t}\left(f(c(t)\right)_{\mid_{t=0}}\in \mathbb{R}.
		\end{align*}
		One calls $\mathrm{d}_{c}(f) $ the \emph{path derivative} of the function $f\in C^{\infty}(X,\mathbb{R})$  in direction $c$.
	\end{Definition}
	
	{As explained in Example \ref{ex:cross}, see e.g. \cite{GW2020}, ${C}^{ie}T_xX$}
	does not provide a natural diffeological vector space structure. This motivates the next tangent space definition.
	\begin{Definition}
		\label{def:TxX}
		Let $X$ be a diffeological space and $x \in X$. 
		The {internal-external} tangent space {${}^{ie}T_xX$ is given by
		\begin{align*}
			{}^{ie}T_xX := \operatorname{span}{C}^{ie}T_xX.
		\end{align*}}
	\end{Definition}
	
	\begin{rem}
		{The set ${C}^{ie}T_xX$ is noted by $C_xX$ and is called the tangent cone  in \cite{GW2020}. Due to the multiple definitions of tangent objects in this work, we feel necessary to use the notation ${C}^{ie}T_xX$ because simplified notations woul be surely misleading.}
	\end{rem}
	
	Regarding the diffeologies on {${C}^{ie}T_xX$ and ${}^{ie}T_xX$} the following point of view is considered in \cite{GW2020} in order to obtain a \emph{diffeological tangent space} from the set {${C}^{ie}T_xX$:}
	\begin{itemize}
		\item 
		An element in {${}^{ie}T_xX$} is a finite sum of elements in {}{ $ {C}^{ie}T_xX $} that are multiplied by a scalar.
		\item
		{The diffeology  of ${C}^{ie}T_xX$ is the push-forward diffeology through the map $$c \in C_x \mapsto d_c \in {C}^{ie}T_xX$$ and a plot in ${}^{ie}T_xX$ is a mapping
		$$
		U\to {}^{ie}T_xX, \, u \mapsto \sum_{i=1}^{+\infty}\lambda_i(u)p_i(u)
		$$
		where $U\subset\mathbb{R}^n$ denotes an open subset with $n \in \mathbb{N}$,} $\lambda_i\colon  U \rightarrow\mathbb{R}$ is smooth {such that the sequence 
		$$\lambda = (\lambda_1, \cdots , \lambda_i , \cdots )$$
		is a plot of the subset diffeology of $$ \R^\infty = \bigcup_{k \in \N^*} \R^k \subset \R^\N$$}
		 and { each $p_i$ is a plot in ${C}^{ie}T_xX$ with domain $U$.}
	\end{itemize}
	The diffeology introduced on $T_xX$ is the so-called \emph{weak-diffeology} defined in \cite{vincent}. 
	Equipped with the weak diffeology, $ T_xX $ becomes the finest possible diffeological vector space such that, {with slight abuse of intuitionistic notations, $$ \p_{{C}^{ie}T_xX} \subset \p_{T_xX}.$$ }
}

\subsubsection{The diff-tangent space} \label{s:dTX}
We now extend the construction of \cite{Ma2020-3} on Fr\"olicher spaces to (general) diffeological spaces.

\begin{Definition} \label{def:dtangent}We use here the notations that we  used before for the definition of the internal tangent cone of section \ref{s:internal tangent}. 
	Let ${}^{d}T_xX$ be the subset of ${}^iT_xX$ defined by $$ {}^dT_xX = {}^dC_x /\mathcal{R}$$ with $${}^dC_x =\left\{ c \in C_x | \exists \gamma \in C^\infty(\R,\operatorname{Diff}(X)), c(.)=\gamma(.)(x) \hbox{ and } \gamma(0) = Id_X  \right\}$$
\end{Definition}

Through this definition, ${}^dT_xX$ is intrinsically linked with the tangent space at the identity ${}^iT_{Id_X}\operatorname{Diff}(X)$ described in \cite{Les} for any diffeological group (i.e. group equipped with a diffeology which makes composition and inversion smooth).
\begin{rem} \label{rq24}
	Let $\gamma \in C^\infty(\R,\operatorname{Diff}(X))$ such that $\gamma(0)(x)=x.$ Then $\lambda (x) = (\gamma(0))^{-1} \circ \gamma(.)(x)$ defines a smooth path $\lambda \in {}^dC_x.$ {{} Consequently,}  $${}^dC_x =\left\{ c \in C_x | \exists \gamma \in C^\infty(\R,\operatorname{Diff}(X)), c(.)=\gamma(.)(x) \hbox{ and } \gamma(0)= Id_X  \right\}$$ 
\end{rem}

	\subsection{Fiber bundles, principal bundles and vector bundles in diffeologies}

Let us now have a precise look at the notion of fiber bundle in classical (finite dimensional) fiber bundles. Fiber bundles, in the context of smooth finite dimensional manifolds, are defined by \begin{itemize}
	\item a smooth manifold  $E$ called total space
	\item a smooth manifold  $X$ called base space
	\item a smooth submersion $\pi: E \rightarrow X$ called fiber bundle projection
	\item a smooth manifold $F$ called typical fiber, because $\forall x \in X,  \pi^{-1}(x)$ is a smooth submanifold of $E$ diffeomorphic to $F.$
	\item a smooth atlas on $X,$ with domains $U \subset X$ such that $\pi^{-1}(U)$ is an open submanifold of $E$ diffeomorphic to $U \times F.$ We the get a system of local trivializations of the fiber bundle.
\end{itemize}
By the way, in order to be complete, a smooth fiber bundle should be the quadruple data $(E,X,F,\pi)$ (because the definition of $\pi$ and of $X$ enables to find systems of local trivializations). For short, this quadruple setting is often {{} denote}d by the projection map $\pi: E\rightarrow X.$

There exists some diffeological spaces which carry no atlas, so that, the condition of having a system of smooth trivializations in a generalization of the notion of fiber bundles is not a priori necessary, even if this condition, which is additional, enables interesting technical aspects \cite[pages 194-195]{MW2017}. So that, in a general setting, we do not need to assume the existence of local trivializations.
Now, following \cite{pervova}, in which the ideas from \cite[last section]{Sou} have been devoloped to vector spaces, the notion of quantum structure has been introduced in \cite{Sou} as a generalization of principal bundles, and the notion of vector pseudo-bundle in \cite{pervova}.The common idea consist in the description of fibered objects made of a total (diffeological) space $E,$ over a diffeological space $X$ and with a canonical smooth {{} bundle} projection $\pi: E \rightarrow X$ such as, $\forall x \in X,$ $\pi^{-1}(x)$ is endowed with a (smooth) algebraic structure, but for which we do not assume the existence of a system of local trivialization. 
\begin{enumerate}
\item For a diffeological vector pseudo-bundle, the fibers $\pi^{-1}(x)$ are assumed diffeological vector spaces, i.e. vector spaces where addition and multiplication over a diffeological field of scalars (e.g. $\R$ or $\mathbb{C}$) is smooth. We notice that \cite{pervova} only deals with finite dimensional vector spaces.
\item For a so-called ``structure quantique'' (i.e. ``quantum structure'') following the terminology of \cite{Sou}, a diffeological group $G$ is acting on the right, smoothly and freely on a diffeological space $E$. The space of orbits $X=E/G$ defines the base of the quantum structure $\pi: E \rightarrow X,$ which generalize the notion of principal bundle by not assuming the existence of local trivialization. In this picture, each fiber $\pi^{-1}(x)$ is isomorphic to $G.$
\end{enumerate}
From these two examples, we can generalize the picture. 
\begin{Definition}\label{pseu-fib}
Let $E$ and $X$ be two diffeological spaces and let $\pi:E\rightarrow X$ be a smooth surjective map. Then $(E,\pi,X)$ is a \textbf{diffeological fiber pseudo-bundle} if and only if  $\pi$ is a subduction. 
\end{Definition}
{{} Let us precise that we do not assume that there exists a typical fiber, in coherence with Pervova's diffeological vector pseudo-bundles. We
}
can give the following definitions:
\begin{Definition}
Let $\pi:E\rightarrow X$ be a diffeological fiber pseudo-bundle. Then:
\begin{enumerate}
	\item Let $\mathbb{K}$ be a diffeological field. $\pi:E\rightarrow X$ is a diffeological $\mathbb{K}-$vector pseudo-bundle if there exists: 
	\begin{itemize}
		\item a smooth fiberwise map $.\, :\mathbb{K} \times E \rightarrow E,$
		\item a smooth fiberwise map $+:E^{(2)} \rightarrow E$ where $$E^{(2)} = \coprod_{x \in X} \{(u,v) \in E^2\, | \, (u,v)\in \pi^{-1}(x)\}$$ equipped by the pull-back diffeology of the canonical map $E^{(2)} \rightarrow E^2,$
	\end{itemize}
	such that $\forall x \in X, $ $(\pi^{-1}(x),+,.)$ is a diffeological $\mathbb{K}-$vector bundle.
	\item $\pi:E\rightarrow X$ is a \textbf{diffeological gauge pseudo-bundle} if there exists \begin{itemize}
		\item a smooth fiberwise involutive map ${(.)}^{-1}\, E \rightarrow E,$
		\item a smooth fiberwise map $.\,:E^{(2)} \rightarrow E$ 
	\end{itemize}
	such that $\forall x \in X, $ $(\pi^{-1}(x),\, .\, )$ is a diffeological group with inverse map $(.)^{-1}.$
	\item  $\pi:E\rightarrow X$ is a \textbf{diffeological principal pseudo-bundle} if there exists a diffeological gauge pseudo-bundle $\pi': E' \rightarrow X$ such that, considering $$E\times_X E' = \coprod_{x \in X} \{(u,v)\, | \, (u,v)\in \pi^{-1}(x)\times \pi'^{-1}(x)\}$$ equipped by the pull-back diffeology of the canonical map $E\times_X E' \rightarrow E\times E',$ there exists a smooth map $E\times_X E' \rightarrow E$ which restricts fiberwise to a smooth free and transitive right-action $$\pi^{-1}(x) \times \pi'^{-1}(x) \rightarrow \pi^{-1}(x).$$ 
	\item $\pi:E\rightarrow X$ is a \textbf{Souriau quantum structure} if it is a diffeological principal pseudo-bundle with diffeological gauge (pseudo-)bundle $X\times G \rightarrow X.$ 
\end{enumerate}
\end{Definition}

{We now specialize to elementary classes of fiber bundles.}

\subsubsection{Tangent bundles}
The first motivating examples for these constructions were, of course, the tangent cones and the tangent spaces which are all fiber pseudo-bundles and sometimes vector pseudo-bundles.
{Indeed, analyzing the constructions in sections \ref{s:internal tangent}, \ref{s:ieTX} and \ref{s:dTX}, everything starts with a set of paths that we note here generically by $C_x(X)$ which generates the tangent cone at $x,$ that we note here generically by $CT_xX,$ and its diffeology through the quotient by an equivalence relation $\mathcal{R}_x.$ Let us now consider the \textbf{total} tangent space (in fact, tangent cone in general)
$$ CTX = \coprod_{x \in X} CT_xX,$$ for which basic intuition asserts that there exists a much better diffeology than the coproduct diffeology where fibers are arcwise disconnected one to each other. For the construction of this desired diffeology in the context of the internal tangent cone (section \ref{s:internal tangent}) and of the internal-external tangent cone (section \ref{s:ieTX}), one has only to remark that the evaluation maps at $t=0$ define a fiber pseudo-bundle projection
$$ \operatorname{ev}_0 : C^\infty(\R,X) \rightarrow X$$ which is a subduction, and such that $$\forall x \in X, \operatorname{ev}_0^{-1}(x)=C_x(X).$$ The fiberwise equivalence relations $\mathcal{R}_x$ define a global equivalence relation $\mathcal{R }$ on $C^\infty(\R,X)$ for which $$CTX = C^\infty(\R,X) / \mathcal{R},$$ which enables to defined a global quotient diffeology. } 
Therefore, one can define {}{from} the kinetic tangent cone at $x \in X$ the global {}{internal} tangent cone of $X$ and their diffeologies the following way: 

\begin{Definition}
	The  internal or kinetic tangent cone of $X$ as 
	$${}^iTX = \coprod_{x \in X} {}^iT_xX .$$
	The space ${}^iTX$ is endowed by the push-forward of the functional diffeology on $C^\infty(\R,X).$  
\end{Definition}
{The same definition holds for the definition of the total internal-external tangent cone $C^{ie}TX,$ by replacing the equivalence relations. Considering ${}^dTX,$ one has also to change in the definitions $C^\infty(\R,X)$ for its subset $$\coprod_{x \in X} {}^d C_x$$ which is endowed with its subset diffeology. }
\begin{Definition}
	Let $X$ be a Fr\"olicher space.
	we define, by $$ {}^dTX = \coprod_{x \in X} {}^dT_xX$$
	the diff-tangent bundle of $X.$ 
\end{Definition} 

By the way, we can get easily the following observations: 

\begin{Proposition} \label{prop:pdiff}
	Let $(X,\p)$ be a reflexive diffeological space, and let $\p_{\operatorname{Diff}}$ be the functional diffeology on $\operatorname{Diff}(X).$
	\begin{enumerate}
		\item \label{dt1} There exists a diffeology $\p(\operatorname{Diff}) \subset \p$ {{} which is} generated by the family of push-forward diffeologies : 
		$$ \left\{ (\operatorname{ev}_x)_{*}(\p_{\operatorname{Diff}}) \, | \, x \in X  \right\}.$$
		\item \label{dt2} $\forall x \in X, {}^dT_xX$ is the internal tangent cone of $(X,\p(\operatorname{Diff}))$ at $x.$
		\item \label{dt3} $\forall x \in X, {}^dT_xX$ is a diffeological vector space
		\item \label{dt4} The total diff-tangent space $${}^{d}TX = \coprod_{x \in X} {}^{d}TX \subset {}^{i}TX $$ is a vector pseudo-bundle for the subset diffeology inherited from ${}^{i}TX $ and also for the diffeology of internal tangent space of $(X,\p_{\operatorname{Diff}}).$
	\end{enumerate}
\end{Proposition}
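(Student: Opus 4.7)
The plan is to reduce each of the four claims to structural properties of the functional diffeology of $\operatorname{Diff}(X)$ combined with the general theory of diffeological groups. For item (\ref{dt1}), the decisive observation is that each evaluation map $\operatorname{ev}_x : \operatorname{Diff}(X) \to (X,\p)$ is smooth, which is a direct consequence of the Cartesian closedness of functional diffeologies (Proposition \ref{cvar}). Each push-forward $(\operatorname{ev}_x)_*(\p_{\operatorname{Diff}})$ is therefore already a subdiffeology of $\p$. In view of Example \ref{ex:p(A)} and the minimality for inclusion of the generated diffeology, the diffeology generated by the union $\{(\operatorname{ev}_x)_*(\p_{\operatorname{Diff}}) \mid x \in X\}$ is itself a subdiffeology of $\p$.

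For item (\ref{dt2}), I would verify the two set-theoretic inclusions. On the one hand, every $c \in {}^dC_x$ has the form $c(t) = \operatorname{ev}_x \circ \gamma(t)$ with $\gamma \in C^\infty(\R, \operatorname{Diff}(X))$, so $c$ is by construction a 1-plot of $(X, \p(\operatorname{Diff}))$ passing through $x$ at $t=0$. Conversely, a smooth path through $x$ in $(X, \p(\operatorname{Diff}))$ is locally of the form $c(t) = \gamma(t)(y)$ with $\gamma \in C^\infty(\R, \operatorname{Diff}(X))$ and $\gamma(0)(y) = x$; the replacement $\tilde\gamma(t) = \gamma(t) \circ \gamma(0)^{-1}$ made possible by Remark \ref{rq24} lies in $C^\infty(\R, \operatorname{Diff}(X))$, satisfies $\tilde\gamma(0) = \operatorname{Id}_X$ and $\tilde\gamma(t)(x) = c(t)$, placing $c$ in ${}^dC_x$. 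Compatibility of the equivalence relations then follows since all witnesses required in the transitive closure of the construction of Section \ref{s:internal tangent} can themselves be realized within $\p(\operatorname{Diff})$.

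For item (\ref{dt3}), I would view ${}^dT_xX$ as the image of the internal tangent cone ${}^iT_{\operatorname{Id}_X} \operatorname{Diff}(X)$ at the identity of the diffeological group $\operatorname{Diff}(X)$ under the smooth push-forward by $\operatorname{ev}_x$. Since the tangent cone at the neutral element of a diffeological group is a diffeological vector space, with sum encoded by $t \mapsto \gamma_1(t)\gamma_2(t)$ and scalar multiplication by the reparametrization $\gamma \mapsto \gamma(\lambda \cdot)$, these operations descend to smooth maps on ${}^dT_xX$ for its push-forward diffeology. Item (\ref{dt4}) then follows in its main content: the projection ${}^dTX \to X$ admits the smooth zero section coming from the constant path at $\operatorname{Id}_X$, hence is a subduction, and the fiberwise operations remain smooth globally by the previous argument. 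The main obstacle I foresee is the last assertion of (\ref{dt4}), namely the verification that the subset diffeology inherited from ${}^iTX$ agrees on ${}^dTX$ with the diffeology coming from the internal tangent bundle of $(X, \p(\operatorname{Diff}))$: one has to show that every plot of ${}^iTX$ whose image sits in ${}^dTX$ is locally realized as the derivative of a smooth plot of $\operatorname{Diff}(X)$ acting on a plot of $X$, which again rests on the normalization of Remark \ref{rq24} combined with the local generating property of $\p(\operatorname{Diff})$.
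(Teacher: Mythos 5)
Your proposal is correct and follows essentially the same route as the paper's proof: minimality of the generated diffeology for (\ref{dt1}), Remark \ref{rq24} and the local form $\operatorname{ev}_x\circ p$ of plots of $\p(\operatorname{Diff})$ for (\ref{dt2}), transport of the diffeological vector space structure of ${}^iT_{Id_X}\operatorname{Diff}(X)$ through the evaluation maps for (\ref{dt3}), and a global evaluation-map construction on $X\times C^\infty_0(\R,\operatorname{Diff}(X))$ for (\ref{dt4}). You are in fact slightly more explicit than the paper on the one delicate point you flag at the end — the agreement on ${}^dTX$ of the subset diffeology from ${}^iTX$ with the diffeology coming from $(X,\p(\operatorname{Diff}))$ — which the paper's proof addresses only implicitly by describing the push-forward diffeology.
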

\begin{proof}
	(\ref{dt1}) is a consequence of the definition of push-forward diffeologies the following way: the family $$\{ \p \hbox{ diffeology on } X \, | \, \forall x \in X, \, (\operatorname{ev}_x)_{*}(\p_{\operatorname{Diff}}) \subset \p\}$$ has a minimal element by Zorn Lemma. 
	
	(\ref{dt2}) follows from remark \ref{rq24}.
	
	(\ref{dt3}): The diffeology $\p(\operatorname{Diff})$ coincides with the diffeology made of plots which are locally of the form $\operatorname{ev}_x \circ p,$ where $x \in X$ and $p$ is a plot of the diffeology of $\operatorname{Diff}(X).$
	We have that $^{i}T_{Id}\operatorname{Diff}(X)$ is a diffeological vector space, following \cite{Les}. This relation follows from the differentiation of the multiplication of the group: given two paths $\gamma_1, \gamma_2$ in $C^\infty(\R,Diff(X)),$ with $\gamma_1(0)=\gamma_2(0)=Id,$ if $X_i = \partial_t\gamma_i(0)$ for $i \in \{1{{},}2\},$ then $$ X_1 + X_2 = \partial_t (\gamma_1 . \gamma_2) (0).$$ Reading locally plots in $\p(\operatorname{Diff}),$ we can consider only plots of the for $\operatorname{ev}_x \circ p,$ where $p$ is a plot in $\operatorname{Diff}(X)$ such that $p(0) = Id_X.$ By the way the vector space structure on $^{d}T_xX$ is inherited from  $^{i}T_{Id}\operatorname{Diff}(X)$ via evaluation maps.
	
	In order to finish to check (\ref{dt3}), we prove directly $(\ref{dt4})$ by describing its diffeology.
	
	For this, we consider $$C^\infty_0(\R, \operatorname{Diff}(X)) = \left\{ \gamma \in C^\infty_0(\R, \operatorname{Diff}(X)) \, | \, \gamma(0)=Id_X \right\}.$$
	Let $^{d}C = \coprod_{x \in X} ^{d}C_x.$ The total evaluation map \begin{eqnarray*}
		ev : & X \times C^\infty_0(\R, Diff(X)) \rightarrow & ^{d}C \\
		& (x,\gamma) \mapsto & ev_x \circ \gamma
	\end{eqnarray*}
	is fiberwise (over $X$), and onto. By the way we get a diffeology on $ ^{d}C$ which is the push-forward diffeology of $X \times C^\infty_0(\R, \operatorname{Diff}(X))$ by $\operatorname{ev}.$ Passing to the quotient, we get a diffeology on $^{d}TX$ which makes each fiber $^{d}T_xX$ a diffeological vector space trivially.
	
\end{proof}
{Therefore, the diff-tangent space is a vector pseudo-bundle, while one supplementary effort is necessary to generate a diffeology on ${}^{ie}TX$. For this, let us consider the map
$$\begin{array}{ccccc}l & : & C^\infty(X ,\R^\infty) \times C^{ie}TX^\N & \rightarrow &  {}^{ie}TX \\
&& (\lambda_1,\cdots, \lambda_k , \cdots ) \times (d_{c_1}, \cdots , d_{c_k}, \cdots ) & \mapsto & \sum_{k = 1}^{+ \infty} \lambda_k d_{c_k} 
\end{array}
$$
which produces the desired diffeology on ${}^{ie}TX$ by push-forward.

}
\subsubsection{Riemannian metrics on vector pseudo-bundles}
{\begin{Definition} \label{d:RMbundle}
		Let $\pi:E \rightarrow X$ be a real diffeological vector pseudo-bundle. A \textbf{Riemannian metric} on $E$ is a smooth map 
		$$ g : E^{(2)} \rightarrow \R$$ that is fiberwise on each $E_x$ a symmetric, definite and positive bilinear form. 
\end{Definition}
We have here to point out a first group of difficulties which are not present in the context of (classical) finite dimensional manifolds: 
\begin{itemize}
	\item The metric $g$ may not define an isomorphism between each fiber $E_x$ and its diffeological dual $\mathcal{L}(E_x,\R).$ 
	\item The fibers of $E$ may not be isomorphic. 
\end{itemize}
As a first class of examples, one can mention the vector pseudo-bundle ${}^dTX$ when $X$ is equipped with an internal Riemannian metric (see section \ref{s:rmdiff}), but one can wonder whether tangent cones can also carry Riemannian structures, whitout embedding in a larger Riemannian vector pseudo-bundle. To our actual state of knowledge, the only known such structure is described in section \ref{s:rmdiff}. }
\subsubsection{Diffeological principal bundles and connections} 
In \cite[Article 8.32]{Igdiff} Iglesias-Zemmour gives a
definition of a connection on a principal $G$-bundle in terms of
paths on the total space $P,$ generalising the classical notion 
of path lifting for principal bundles with finite-dimensional
Lie groups as structure groups.

\begin{Definition}\label{d:iz-connection}
	Let $G$ be a diffeological group, and let
	$\pi\colon P\to X$ be a principal $G$-bundle.
	Denote by $\pathloc(P)$ the diffeological space of
	\textbf{local paths} (see \cite[Article 1.63]{Igdiff}), and by 
	$\operatorname{tpath}(P)$ the \textbf{tautological bundle of local paths} 
	$$\operatorname{tpath}(P):=\{(\gamma,t)\in\pathloc(P)\times\R\mid t\in D(\gamma)\}.$$ 
	A \textbf{diffeological connection} is a smooth map 
	$H \colon \operatorname{tpath}(P)\to\pathloc(P)$ satisfying the following properties
	for any $(\gamma,t_0)\in \operatorname{tpath}(P)$:
	\begin{enumerate}
		\item the domain of $\gamma$ equals the domain of $H(\gamma,t_0)$,
		\item $\pi\circ\gamma=\pi\circ H(\gamma,t_0)$,
		\item $H(\gamma,t_0)(t_0)=\gamma(t_0)$,
		\item $H(\gamma\cdot g,t_0)=H(\gamma,t_0)\cdot g$ for all $g\in G$,
		\item $H(\gamma\circ f,s)=H(\gamma,f(s))\circ f$ for any smooth map
		$f$ from an open subset of $\R$ into $D(\gamma)$,
		\item $H(H(\gamma,t_0),t_0)=H(\gamma,t_0)$.
	\end{enumerate}
\end{Definition}

Another formulation of this definition can be found in \cite{Ma2013} under the terminology 
of \textbf{path-lifting}.

\begin{rem}\label{r:iz-connection}
	Diffeological connections satisfy many of the usual properties that 
	classical connections on a principal $G$-bundle (where $G$ is a finite-
	dimensional Lie group) enjoy; in particular, they admit unique horizontal 
	lifts of paths in $\pathloc(M)$ 
	\cite[Article 8.32]{Igdiff}, and they pull back by smooth maps 
	\cite[Article 8.33]{Igdiff}.
\end{rem}

\begin{Proposition}
	Let $V$ be a vector space. Then, $G$ acts smoothly from the right on 
	the space $\Omega(P,V)$ of $V$-valued differential forms on $P$ by setting
	$$ 
	\forall (g,\alpha) \in  \Omega^n(P,V) \times G, \forall p\in \p(P), \quad 
	(g_*\alpha)_{g.p} = \alpha_p \circ (dg^{-1})^n \; .
	$$
\end{Proposition}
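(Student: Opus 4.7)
The plan is to verify three properties in turn: that the prescription well-defines a $V$-valued $n$-form on $P$ for each pair $(g,\alpha)$; that $g \mapsto g_*$ is a right group action on $\Omega(P,V)$; and that this action is smooth for the functional diffeologies already introduced on $\Omega^n(P,V)$ and on $G$.

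First I would check the form axioms. Since $G$ is a diffeological group acting smoothly on $P$, right multiplication $R_g : x \mapsto x\cdot g$ is a diffeomorphism of $P$, so for every plot $p: O_p \to P$ the composition $g.p := R_g\circ p$ is again a plot of $P$ with the same domain. If $p = p' \circ f$ for a smooth $f$ between plot domains, then $g.p = (g.p')\circ f$; combining $\alpha_p = f^*\alpha_{p'}$ with the fact that $(dg^{-1})^n$ acts on $n$-tuples of tangent vectors (independent of the parametrization of the plot), the reparametrization compatibility of $\alpha$ carries over to $g_*\alpha$. The pointwise compatibility between two plots $p,p'$ with overlapping images reduces similarly to the pointwise compatibility of $\alpha$ applied to the $R_{g^{-1}}$-translates of the relevant vectors.

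Second, I would verify the right action axiom $(gh)_*\alpha = h_*(g_*\alpha)$. Using $(gh).p = g.(h.p)$ in $P$ together with the chain rule $(d(gh)^{-1})^n = (dh^{-1})^n \circ (dg^{-1})^n$ for the composed diffeomorphism $R_{(gh)^{-1}} = R_{h^{-1}}\circ R_{g^{-1}}$, both sides evaluated on the plot $(gh).p$ unwind to $\alpha_p \circ (dh^{-1})^n \circ (dg^{-1})^n$.

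Third, smoothness amounts, after unfolding the definitions of the functional diffeology on $\Omega^n(P,V)$, to showing that for any plot $q : U \to G$ and any plot $\rho : U \to \Omega^n(P,V)$, the map $u \mapsto q(u)_*\rho(u)$ is itself a plot of $\Omega^n(P,V)$; equivalently, for every plot $p$ of $P$, the map $u \mapsto (q(u)_*\rho(u))_p$ must lie in $C^\infty(U,\Omega^n(O_p,V))$. This follows by combining the smoothness of the action $G\times P \to P$ (so that $(u,y)\mapsto q(u)^{-1}\cdot p(y)$ defines a plot of $P$), the smoothness of inversion in the diffeological group $G$ (Proposition on composition of maps being smooth for functional diffeologies), and the assumption that $\rho$ is itself a plot. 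The main obstacle is interpreting $(dg^{-1})^n$ precisely in the diffeological setting, since $g^{-1}$ is a diffeomorphism of $P$ rather than of a Euclidean space: the correct reading is that $R_{g^{-1}}$ acts on plots by post-composition and its ``differential'' is encoded by the second compatibility axiom of differential forms ($\alpha_p = f^*\alpha_{p'}$ whenever $p = p'\circ f$). Once this interpretation is fixed, all three verifications reduce to routine chain-rule computations on Euclidean plot domains, where classical calculus applies directly.
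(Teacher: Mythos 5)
Your proposal is correct and takes essentially the same route as the paper's (extremely terse) proof, whose entire content is the observation that $g.p\in\p(P)$ whenever $p\in\p(P)$ because the action of $G$ on $P$ is smooth, after which well-definedness of the right action and its smoothness are declared trivial; you simply fill in those routine plotwise verifications. One small slip worth noting: for a right action one has $R_{(gh)^{-1}}=R_{g^{-1}}\circ R_{h^{-1}}$ rather than $R_{h^{-1}}\circ R_{g^{-1}}$, so the factors $(dg^{-1})^n$ and $(dh^{-1})^n$ should appear in the opposite order to the one you wrote --- this does not affect the conclusion that $(gh)_*=h_*\circ g_*$.
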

\begin{proof}
	$G$ acts smoothly on $P$ so that, if $p \in \p(P),$ $g.p \in \p(P)$.
	The right action is now well-defined, and smoothness is trivial.
\end{proof}

\begin{Definition}
	Let $\alpha \in \Omega(P;\mathfrak{g}).$ The differential form $\alpha$ is 
	\textbf{right-invariant} if and only if, for each $p \in \p(P),$ and for each 
	$g \in G,$
	$$\alpha_{g.p} = Ad_{g^{-1}} \circ g_*\alpha_p \; .$$
\end{Definition}

Now, let us turn to connections and holonomy. Let $p \in P$ and
let $\gamma$ be a smooth path in $P$ starting at $p.$

\begin{Definition}
	A \textbf{connection} on $P$ is a $\mathfrak{g}-$valued right-invariant 
	$1$-form $\theta,$ , such that, for each $ v \in \mathfrak{g},$ for any path 
	$c : \R \rightarrow G$ such that 
	$$
	\left\{\begin{array}{ccr} c(0)& = & e_G\\ \partial_tc(t)|_{t=0}&= & v \; \; ,\end{array} \right.
	$$ 
	and for each $p \in P$ we have: 
	$$\theta(\partial_t(p.c(t))_{t = 0})=v \; .$$
\end{Definition}
Now we assume that $dim(M)\geq 2$ and we fix a connection
$\theta$ on $P.$

\begin{Definition}
	Let $\alpha \in \Omega(P;\mathfrak{g})$ be a $G-$invariant $1$-form. Let 
	$\nabla \alpha = d\alpha - {\frac{1}{ 2}}[\theta,\alpha]$ be the horizontal derivative 
	of $\alpha.$ The curvature $2$-form induced by $\theta$ is 
	$$ \Omega = \nabla \theta \; .$$ 
\end{Definition}

\section{Diffeologies in functional equations}\label{diffeq}
	\subsection{Principal bundles, fully regular Lie groups and holonomy} \label{s:reg}
	\subsubsection{Motions on groups and (partial) differential equation}
	{A wide class of first order differential equations are usually solved globally by the use of the  integration of smooth paths $v$ in a Lie algebra into smooth paths $g$ in a Lie group, satisfying a differential equation of the type \begin{equation}\label{eq:logder}g(t)^{-1}\partial_tg(t) = v(t). \end{equation}
	When $v$ is constant, $g(t)=\operatorname{exp}(tv)$ produces, when it exists, the only solution $g$ such that $g(0)=e.$
	This approach is the initial motivation for the development of the notion of Lie grops by Sophus Lie. For finite dimensional examples, this approach gives a geometric way to solve differential equations with value in Lie groups, but these techniques can be also of interest for infinite dimensional objects.
	
	More precisely, the first examples of interest, where the exponential map is not easy to define, are linear differential equations of the form 
\begin{equation} \label{eq:linPDE} \frac{df}{dt} = A(f) \end{equation}
where $A$ is a differential operator of positive order acting in a space of smooth functions $F.$ 
\begin{ex}
	When $F = C^\infty(S^1,\R)$, consider $A = - \frac{d^2}{dx^2}.$ Then there is an operator $\operatorname{exp}(tA)$ which is unbounded, that solves (\ref{eq:linPDE} with eigenvalues $e^{tn^2}.$ 
	
	Modifying $A$ into $\frac{d^2}{dx^2}$, the solution $\operatorname{exp}(tA)$ is a family of Fredholm operators of index $0$ with eigenvalues $e^{-tn^2} \leq 1.$  
\end{ex} 
From this elementary example, which can be expanded and complexified as far as wanted, one can easily understand that control of the regularity of the solutions for these equations, and the methods for approximating them numerically, highly depends on the nature of the operator $A.$ The same way, equations of the form 
\begin{equation} \label{eq:Liebracket}
\frac{dS}{dt} = \left[A(t), S(t)\right],
\end{equation}
where $S$ is an operator-valued solution in a Lie algebra $\mathfrak{g}$ and $A$ is a smooth path in $\mathfrak{g},$
have a formal solution $$S(t) = \operatorname{Ad}_{G(t)} S(0)$$ where $G$ is the solution of equation (\ref{eq:logder})  . 

These elementary facts
can be generalized in two ways that we wish to mention here: 
\begin{itemize}
	\item Symmetries of a (partial) differential equation generate transformation groups which transform one solution to another. In practice, one often deals with \textit{infintesimal symmetries} which are formally tangent vectors to the group of symmetries \cite{O}. Integrating a Lie algebra of infinitesimal symmetries is less easy and requires technical abilities in the field of infinite dimensional Lie groups. One can find an example of such concerns for symmetries of the 3d-Euler equation in \cite{Rob}. These indications help to solve the PDE by reduction, in general case by case and when it produces ``enough'' symmetries, but anyway the symmetries of PDEs are in general of interest for the global understanding of the space of solutions.
	\item Integrable systems are differential equations that are equivalent to a zero curvature condition, that is, they can be expressed as $$dS \pm [S,S] =0$$ where $S$ is a well-chosen 1-form built from the initial PDE. The main clue for solving the problem is that this precise equation is a so-called \textit{zero curvature equation}, that integrates globally when one deals with on a trivial finite dimensional principal bundle in which $S$ is understood as a connexion 1-form.   
\end{itemize} 
 In section \ref{diffeq}, we expose what is the actual state-of-the-art of the application of diffeologies to the necessary technical properies on diffeological Lie groups, in view of potential applications to PDEs that can be expressed in an infinite dimensonal Lie group. }
	\subsubsection{On regular diffeological Lie groups}
	Since we are interested in infinite-dimensional analogues of Lie 
groups, we need to consider tangent spaces of 
diffeological spaces, and we have to deal with Lie algebras and 
exponential maps. 
We state, after \cite{Les,DN2007-1,CW} the 
following definition:

\begin{Definition} \label{reg1} \cite{Les} 
	A {diffeological} Lie group $G$ with Lie algebra $\mathfrak{g}$
	is called \textbf{regular} if and only if there is a smooth map 
	\[
	\operatorname{Exp}:C^{\infty}([0;1],\mathfrak{g})\rightarrow C^{\infty}([0,1],G)
	\]
	such that $g(t)=\operatorname{Exp}(v(t))$ is the unique solution
	of the differential equation \begin{equation}
		\label{loga}
		\left\{ \begin{array}{l}
			g(0)=e\\
			\frac{dg(t)}{dt}g(t)^{-1}=v(t)\end{array}\right.\end{equation}
	We define the exponential function as follows:
	\begin{eqnarray*}
		exp:\mathfrak{g} & \rightarrow & G\\
		v & \mapsto & exp(v)=g(1) \; ,
	\end{eqnarray*}
	where $g$ is the image by $\operatorname{Exp}$ of the constant path $v.$ 
\end{Definition}
{\begin{rem}
	Equation (\ref{loga}) is called \textbf{right logarithmic} equation while Equation \ref{eq:logder} is called \textbf{left logarithmic}. The correspondence between the two equations is actually well-exposed in \cite{KM}, and the existence of the solution of the first one is equivalent to the existence of a solution to the second one.
\end{rem}}
When the Lie group $G$ is a vector space $V$, the notion of regular Lie group specialize to what is called 
{\em regular vector space} in \cite{Ma2013} and {\em integral vector space} in \cite{Les}; we follow the first
terminology.

\begin{Definition} \label{reg2} \cite{Les}
	Let $(V,\p)$ be a {diffeological} vector space. 
	The space $(V,\p)$ is \textbf{integral} or \textbf{regular} if there is a smooth map
	$$ 
	\int_0^{(.)} : C^\infty([0;1];V) \rightarrow C^\infty([0;1],V)
	$$ 
	such that $\int_0^{(.)}v = u$ if and only if $u$ is the unique solution of the differential equation
	\[
	\left\{ \begin{array}{l}
		u(0)=0\\
		u'(t)=v(t)\end{array}\right. .\]
\end{Definition}

This definition applies, for instance, if $V$ is a complete locally convex topological vector space equipped 
with its natural Fr\"olicher structure given by the Fr\"olicher completion of its n\'ebuleuse diffeology, see
\cite{Igdiff,Ma2006-3,Ma2013}. We give now the corresponding notion for derivatives, after remarking that $\forall v \in V, V \subset {}^iT_vV$ through the identification of linear paths $t \mapsto tv$ with $v:$ 

\begin{Definition}
	Let $V$ be a diffeological vector space. Then $V$ is {\bf co-regular}
 if $$ \forall v \in V, {}^iT_vV = V.$$\end{Definition}

This property, which seems natural, highly depends on the diffeology considered, see Example \ref{spagh2}. This may explain why all the authors since \cite{Les} and till now, and even us, have considered this problem as minor or even negligible. 

\begin{Definition}
	Let $G$ be a {diffeological} Lie group with Lie algebra $\mathfrak{g}$. 
	Then, $G$ is { fully regular, i.e. regular} with integral Lie algebra if  
	$\mathfrak{g}$ is integral and $G$ is regular in the sense of 
	Definitions $\ref{reg1}$ and $\ref{reg2}$.
\end{Definition}

We finish this section with two structural results essentially proven in 
\cite{Ma2013} in a restricted setting for the second one. 

\begin{Theorem}\label{exactsequence} 
	Let
	$$ 
	1 \longrightarrow K \stackrel{i}{\longrightarrow} G \stackrel{p}{\longrightarrow}  H \longrightarrow 1 
	$$
	be an exact sequence of Fr\"olicher Lie groups, such that there is 
	a smooth section $s : H \rightarrow G,$ and such that the trace 
	diffeology  on $i(K) \subseteq G$ coincides with the push-forward 
	diffeology from $K$ to $i(K).$ We consider also the corresponding 
	sequence of Lie algebras
	$$ 
	0 \longrightarrow \mathfrak{k} \stackrel{i'}{\longrightarrow} 
	\mathfrak{g} \stackrel{p}{\longrightarrow}  
	\mathfrak{h} \longrightarrow 0 \; . 
	$$
	Then,
	\begin{itemize}
		\item The Lie algebras $\mathfrak{k}$ and $\mathfrak{h}$ are 
		integral if and only if the Lie algebra $\mathfrak{g}$ is integral;
		\item The Fr\"olicher Lie groups $K$ and $H$ are regular if and 
		only if the Fr\"olicher Lie group $G$ is regular.
	\end{itemize}
\end{Theorem}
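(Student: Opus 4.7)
The plan is to exploit the smooth section $s: H \to G$ to split the sequence at every level (paths, Lie algebras, groups), and then to use the trace-diffeology hypothesis to move smooth paths freely between $i(K) \subset G$ and $K$ itself. Differentiating $s$ at the identity yields a smooth linear right-inverse $\sigma: \mathfrak{h} \to \mathfrak{g}$ to $p$, and combined with the identity $\ker(p) = i'(\mathfrak{k})$ (which also holds at the Lie algebra level), every smooth path $v \in C^\infty([0,1], \mathfrak{g})$ decomposes as $v = i'(v_K) + \sigma(v_H)$, with $v_H := p \circ v$ and $v_K$ uniquely determined by $i'(v_K) = v - \sigma(v_H)$. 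The trace-diffeology hypothesis, transported to the internal tangent cones at the identity, makes $i'$ a diffeomorphism onto its image, so $v_K$ is indeed smooth.

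For the Lie algebra statement, the forward direction goes by pushing paths through $i'$ or $\sigma$, integrating in $\mathfrak{g}$, and then either projecting via $p$ or using $(p \circ u)' = 0$ with $u(0) = 0$ to conclude that the solution remains in $i'(\mathfrak{k})$ and descends to $\mathfrak{k}$. The converse splits $v$ as above, integrates $v_K$ in $\mathfrak{k}$ and $v_H$ in $\mathfrak{h}$ separately, and reassembles through the splitting to produce the primitive of $v$ in $\mathfrak{g}$.

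The nontrivial regularity direction is $K, H$ regular $\Rightarrow$ $G$ regular. Given $v \in C^\infty([0,1], \mathfrak{g})$, integrate $v_H$ in $H$ to obtain a smooth path $h$ and set $\tilde g(t) := s(h(t))$ (after replacing $s$ by $s(\cdot)\,s(e_H)^{-1}$ so that $\tilde g(0) = e_G$). Its right-logarithmic derivative $\tilde v$ satisfies $p \circ \tilde v = v_H = p \circ v$, so $v - \tilde v$ takes values in $i'(\mathfrak{k})$; since $i(K) = \ker p$ is normal, $\operatorname{Ad}_{\tilde g(t)^{-1}}$ stabilises $i'(\mathfrak{k})$, and
\[
w(t) := \operatorname{Ad}_{\tilde g(t)^{-1}}\bigl(v(t) - \tilde v(t)\bigr)
\]
defines a smooth path in $i'(\mathfrak{k})$. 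Pull $w$ back to $w_K \in C^\infty([0,1], \mathfrak{k})$, integrate it in $K$ via $\operatorname{Exp}_K$ to produce $k$ with $k(0) = e_K$, and set $g(t) := \tilde g(t) \cdot i(k(t))$. The standard formula for the right-logarithmic derivative of a product then gives $g' g^{-1} = \tilde v + \operatorname{Ad}_{\tilde g}(i'(k' k^{-1})) = \tilde v + (v - \tilde v) = v$, so $g$ is the desired solution of Equation \eqref{loga}. The reverse direction $G \Rightarrow K, H$ mirrors the Lie-algebra argument, using $\operatorname{Exp}_G$, the group morphism $p$, and the trace-diffeology transport.

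The main obstacle is verifying diffeological smoothness of every intermediate construction, in particular of the family $t \mapsto \operatorname{Ad}_{\tilde g(t)^{-1}}|_{i'(\mathfrak{k})}$ and of the pull-back $(i')^{-1}: i'(\mathfrak{k}) \to \mathfrak{k}$. Both hinge essentially on the trace-diffeology hypothesis; without it, an injective smooth morphism need not admit a smooth inverse on its image. Moreover, $\sigma$ is only smooth and linear, not a Lie-algebra homomorphism, so no bracket identity is available, and the entire argument must be phrased in terms of group multiplication and the right-logarithmic derivative.
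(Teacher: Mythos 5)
Your proposal is correct and follows essentially the same route as the proof this paper defers to (it cites \cite{Ma2013} rather than reproving the statement): split off the $H$-part via the section, lift and take the defect, conjugate it into $i'(\mathfrak{k})$ using normality of $\ker p$, integrate in $K$, and reassemble with the product rule $\delta^r(\tilde g k)=\delta^r(\tilde g)+\operatorname{Ad}_{\tilde g}(\delta^r(k))$, with the trace-diffeology hypothesis doing exactly the work you assign it, namely making $(i')^{-1}$ smooth on $i'(\mathfrak{k})$ so that $w$ is a smooth path in $\mathfrak{k}$. The only points left implicit are uniqueness of the solution and smoothness of $v\mapsto g$ as a map of path spaces, both of which follow routinely from the same decomposition and the corresponding properties of $\operatorname{Exp}_K$ and $\operatorname{Exp}_H$.
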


\begin{Theorem} \label{regulardeformation} 
	Let $(A_n)_{n \in \mathbb{N}} $ be a sequence of coregular and integral   
	Fr\"olicher vector spaces equipped with a graded smooth 
	multiplication operation on $\bigoplus_{n \in \mathbb{N}^*} A_n\, ,$ 
	i.e. a multiplication such that for each $n,m \in \mathbb{N}^*$, 
	$A_n .A_m \subset A_{n+m}$ is smooth with respect to the 
	corresponding Fr\"olicher structures.  
	
	\begin{itemize}
		\item 
		Let us define the (non unital) algebra of formal series:
		$$
		\mathcal{A}= \left\{ \sum_{n \in \mathbb{N}^*} a_n | 
		\forall n \in \mathbb{N}^* , a_n \in A_n \right\}\; ,
		$$
		equipped with the Fr\"olicher structure of the infinite product.
		Then, the space 
		$$1 + \mathcal{A} = 
		\left\{ 1 + \sum_{n \in \mathbb{N}^*} a_n | 
		\forall n \in \mathbb{N}^* , a_n \in A_n \right\} $$
		is a regular Fr\"olicher Lie group with 
		integral Fr\"olicher Lie algebra $\mathcal{A}.$
		Moreover, the exponential map defines a smooth bijection 
		$\mathcal{A} \rightarrow 1+\mathcal{A}.$
	\end{itemize}
\end{Theorem}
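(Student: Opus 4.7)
The plan is to exploit the grading to reduce every structural verification on $1+\mathcal{A}$ to a finite computation on $\bigoplus_{k=1}^n A_k$ for each fixed $n$, combined with componentwise integration in $C^\infty([0;1],\mathcal{A})$.

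First I would set up the group structure. Given $a = \sum_{n\geq 1} a_n$ and $b = \sum_{n\geq 1} b_n$ in $\mathcal{A}$, define $(1+a)(1+b) = 1 + a + b + ab$ where the $n$-th graded component of $ab$ is $\sum_{k=1}^{n-1} a_k b_{n-k}$, a finite sum in $A_n$. For the inverse, set $(1+a)^{-1} = 1 + \sum_{k\geq 1}(-1)^k a^k$; since $a^k \in \bigoplus_{m\geq k}A_m$, the $n$-th component is again a finite sum involving only $a_1,\dots,a_n$. Smoothness of both operations for the Fr\"olicher product structure follows because the projection onto each $A_n$ is a polynomial expression in finitely many of the $a_j, b_j$, and the graded multiplication $A_k\times A_m \to A_{k+m}$ is smooth by hypothesis; composed with the universal property of the infinite Fr\"olicher product, this yields smoothness globally. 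The Lie algebra is identified with $\mathcal{A}$ via the path $t\mapsto 1+ta$, and integrality of $\mathcal{A}$ is immediate from integrality of each $A_n$: integration commutes with the projections, so $\int_0^{(\cdot)}: C^\infty([0;1],\mathcal{A}) \to C^\infty([0;1],\mathcal{A})$ is defined componentwise and is smooth.

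Next I would verify regularity. Given $v = \sum_{n\geq 1} v_n \in C^\infty([0;1],\mathcal{A})$, I seek $g(t) = 1 + \sum_{n\geq 1} g_n(t)$ solving $g'(t) = v(t)g(t)$ with $g(0)=1$. Projecting onto $A_n$ yields the triangular system
\begin{equation*}
g_n'(t) = v_n(t) + \sum_{k=1}^{n-1} v_k(t)\, g_{n-k}(t), \qquad g_n(0) = 0,
\end{equation*}
which is solved recursively: $g_1(t) = \int_0^t v_1$, and once $g_1,\dots,g_{n-1}$ are smooth paths in their respective $A_k$, the right-hand side is a smooth path in $A_n$ (by smoothness of the graded multiplication), hence integrable in $A_n$. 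Smoothness of the assignment $v \mapsto g$ in the sense of Definition \ref{reg1} follows by assembling the pieces: each $v \mapsto g_n$ factors as a composition of the smooth graded multiplication, the smooth integration map on $A_n$, and the smooth projections; the resulting map into the Fr\"olicher product is then smooth. Uniqueness is automatic at the level of each recursion.

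Then I would address the exponential. Set $\operatorname{exp}(a) = \sum_{k\geq 0} a^k / k!$; since $a^k$ lives in degrees $\geq k$, the projection onto $A_n$ is the \emph{finite} sum $\sum_{k=1}^n (a^k)_n / k!$. This is a polynomial expression in $a_1,\dots,a_n$, hence smooth, and assembles to a smooth map $\mathcal{A}\to 1+\mathcal{A}$. To prove bijectivity, I construct the logarithm by $\operatorname{log}(1+a) = \sum_{k\geq 1}(-1)^{k+1}a^k/k$, again a finite sum in each $A_n$, and verify $\operatorname{exp}\circ\operatorname{log} = \operatorname{id}_{1+\mathcal{A}}$ and $\operatorname{log}\circ\operatorname{exp}=\operatorname{id}_\mathcal{A}$ component by component; this reduces to the classical formal identities in $\mathbb{Q}[[x]]$, which hold at each fixed degree. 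Finally, compatibility with the regular structure is checked by noting that $t\mapsto \operatorname{exp}(tv)$ solves (\ref{loga}) with constant right-hand side $v$, which is consistent with the recursive solution above.

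The main obstacle I anticipate is not the algebraic manipulations, which are formal, but the careful bookkeeping of smoothness under the Fr\"olicher product structure: one must ensure that the induction producing $g_n$ depends smoothly on $v$ not just at fixed $n$ but uniformly enough to yield a smooth map into the infinite product. This is precisely where the combination of graded smoothness of the multiplication and integrality of each $A_n$ (which gives the smooth integration used at every recursive step) works together, and where Theorem \ref{exactsequence} applied inductively to the truncations $\bigoplus_{k=1}^n A_k$ provides an alternative organization of the argument if one prefers to avoid the explicit recursion.
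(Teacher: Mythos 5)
Your argument is correct and is essentially the proof that the paper defers to (the result is stated here without proof and attributed to \cite{Ma2013}): the group law, inverse, exponential and logarithm are handled degree by degree as finite polynomial expressions in the graded components, and regularity is obtained by solving the triangular system $g_n' = v_n + \sum_{k=1}^{n-1} v_k g_{n-k}$ recursively, using integrality of each $A_n$ and smoothness of the graded multiplication, with componentwise smoothness sufficing for the product Fr\"olicher structure. Your worry about uniformity across degrees is unfounded for exactly that last reason, so the proof closes as you describe.
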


A result similar to Theorem \ref{exactsequence} is also 
valid for Fr\'echet Lie groups, see \cite{KM}.
\subsubsection{On the holonomy of a connection in a diffeological principal bundle}
Now, let $P$ be a principal bundle. Let $p \in P$ and
$\gamma$ a smooth path in $P$ starting at $p,$ defined on $[0,1].$
Let $H_\theta \gamma (t) = \gamma(t)g(t)$, where $g(t) \in C^\infty([0,1];\mathfrak{g})$ 
is a path satisfying the differential equation:
$$
\left\{ \begin{array}{c} \theta \left( \partial_t H_\theta\gamma(t) \right) = 0  \\ 
	H_\theta\gamma(0)=\gamma(0) \end{array} \right.
$$
The first line of this equation is equivalent to the differential
equation $$g^{-1}(t)\partial_tg(t) = -\theta(\partial_t\gamma  (t))$$ which is
integrable, and the second line is equivalent to the initial condition $g(0)=e_G.$
This shows that horizontal lifts are well-defined, as in the standard case
of finite-dimensional manifolds. Moreover, the map $H_\theta(.)$ defines trivially a
diffeological connection. This enables us to consider the holonomy
group of the connection. Notice that a straightforward adaptation
of the arguments of \cite{Ma2013} shows that the holonomy group does not depend (up to 
conjugation and up to the choice of connected component of $M$) on the choice of the base 
point $p.$
This definition allows us to consider reductions of the structure group. {}{Following \cite{Ma2013},}

\begin{Theorem} \label{Courbure} 
	We assume that $G_1$ and $G$ are regular Fr\"olicher groups
	with regular Lie algebras $\mathfrak{g}_1$ and $\mathfrak{g}.$
	Let $\rho: G_1 \mapsto G$ be an injective morphism of Lie groups.
	If there exists a connection $\theta$ on $P$, with curvature $\Omega$, 
	such that for any smooth $1$-parameter family $H_\theta c_t$ of 
	horizontal paths starting at $p$, and for any smooth vector fields $X,Y$ 
	in $M$, the map
	\begin{eqnarray} 
		s, t \in [0,1]^2 & \rightarrow & \Omega_{Hc_t(s)}(X,Y)  \label{g1}
	\end{eqnarray}
	is a smooth $\mathfrak g_1$-valued map (for the $\mathfrak g _1 -$ 
	diffeology),
	\noindent
	and if $M$ is simply connected, then the structure group $G$ of $P$ reduces to 
	$G_1,$ and the connection $\theta$ also reduces.
\end{Theorem}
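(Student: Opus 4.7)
The plan is to adapt the classical Ambrose--Singer reduction theorem to the diffeological Fr\"olicher setting, exploiting the regularity hypotheses on $G$ and $G_1$ together with the simple connectedness of $M$. First I would fix a base point $p_0\in P$ and consider the holonomy subbundle
$$P_1 := \{H_\theta\gamma(1) \mid \gamma \in C^\infty([0,1],M),\, \gamma(0)=\pi(p_0)\} \subset P,$$
endowed with its subset diffeology. The holonomy group $\operatorname{Hol}_\theta(p_0)\subset G$ acts on $P_1$ on the right, and it suffices to show that $\operatorname{Hol}_\theta(p_0) \subset \rho(G_1)$ and that $\theta$ takes values in $\mathfrak g_1$ when restricted to $TP_1$; both reductions then follow at once.

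The heart of the argument is the identification of $\operatorname{Hol}_\theta(p_0)$ with a subgroup of $\rho(G_1)$. Since $M$ is simply connected, every smooth loop at $\pi(p_0)$ is smoothly null-homotopic, hence extends to a smooth map $\sigma\colon [0,1]^2\to M$ from a square whose boundary traces the loop. Lifting horizontally, one obtains a smooth family $H_\theta c_t$ of horizontal paths in $P$ starting at $p_0$. The holonomy element associated to the loop is then $g(1)$, where $g\colon[0,1]\to G$ solves a right-logarithmic differential equation whose right-hand side, by a standard Stokes-type computation, is a smooth integral expression in the curvature values $\Omega_{H_\theta c_t(s)}(\partial_s\sigma,\partial_t\sigma)$. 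By hypothesis (\ref{g1}), this right-hand side is a smooth $\mathfrak g_1$-valued map. Because $\mathfrak g_1$ is integral and $G_1$ is regular, the unique solution of this logarithmic equation lies in $\operatorname{Exp}(C^\infty([0,1],\mathfrak g_1))\subset C^\infty([0,1],G_1)$, so $g(1)\in \rho(G_1)$. This is where I expect the main technical obstacle: one must argue carefully that the Stokes-type identity making the logarithmic derivative a smooth $\mathfrak g_1$-valued path genuinely holds in the diffeological framework, and that the uniqueness clause in Definition~\ref{reg1}, applied simultaneously in $G_1$ and $G$, forces the $G$-valued solution to coincide with the $G_1$-valued one via the injection $\rho$.

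Once this is established, the reduction of the structure group follows: $P_1$ inherits the structure of a diffeological principal $G_1$-pseudo-bundle in the sense of Section 2, because the orbits of $\operatorname{Hol}_\theta(p_0)\subset \rho(G_1)$ on $P_1$ foliate the fibers and the quotient map $P_1\to M$ is a subduction. For the reduction of the connection, I would show that $\theta$ restricted to tangent vectors of horizontal paths contained in $P_1$ vanishes (by construction), while along vertical directions generated by $\mathfrak g_1$ it is $\mathfrak g_1$-valued by the defining right-invariance property of a connection form. Together these show that the pullback of $\theta$ to $P_1$ is a $\mathfrak g_1$-valued right-invariant $1$-form, i.e.\ a connection form for the reduced bundle $P_1\to M$, completing the proof.

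A few technical verifications remain: the smoothness of the horizontal lift map with respect to variations of the base path (which follows from the regularity of $G$ and the pathwise definition of diffeological connections in Definition~\ref{d:iz-connection}), the independence of $\operatorname{Hol}_\theta(p_0)$ from the chosen base point up to conjugation (already noted in the excerpt following \cite{Ma2013}), and the fact that smooth null-homotopies can be used in the Stokes argument above -- here simple connectedness is what allows one to replace an arbitrary loop by an integral of curvature, precisely as in the finite-dimensional case.
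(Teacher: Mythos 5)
Your proposal is essentially correct and follows the same strategy as the proof of this theorem in \cite{Ma2013}, which the paper cites rather than reproves: express the holonomy of each (null-homotopic, by simple connectedness) loop via a non-abelian Stokes-type formula as the endpoint of the solution of a logarithmic equation whose right-hand side is a curvature integral, use the regularity of $G_1$ and the integrality of $\mathfrak g_1$ to integrate that $\mathfrak g_1$-valued path inside $G_1$ (with uniqueness in the regular group $G$ forcing agreement through $\rho$), and then build the reduction from the horizontal-lift subset $P_1$. The only point to tighten is that the reduced principal $G_1$-bundle is $P_1\cdot\rho(G_1)$ rather than $P_1$ itself, since the fibers of $P_1$ are orbits of the holonomy group, not of all of $G_1$.
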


We can now state the announced Ambrose-Singer theorem, using the terminology of \cite{Rob} 
for the classification of groups via properties of the exponential map \cite{Ma2013}:

\begin{Theorem}
	\label{Ambrose-Singer} 
	Let $P$ be a principal bundle whose  structure group is a fully regular Fr\"olicher 
	Lie group $G$. Let $\theta$ be a connection on $P$ and $H_\theta$ the associated 
	diffeological connection.
	\begin{enumerate}
		\item For each $p \in P,$ the holonomy group $\Hol_p^L$ is a
		diffeological subgroup of $G$, which does not depend on the choice of
		$p$ up to conjugation.
		
		\item There exists a second holonomy group $H^{red},$ $\Hol \subset H^{red},$
		which is the smallest structure group for which there is a subbundle $P'$ to
		which $\theta$ reduces. Its Lie algebra is spanned by the curvature elements, i.e.
		it is the smallest integrable Lie algebra which contains the 
		curvature elements.
		
		\item If $G$ is a Lie group (in the classical sense) of type I or II,
		there is a (minimal) closed Lie subgroup $\bar{H}^{red}$ (in the 
		classical sense) such that $H^{red}\subset \bar{H}^{red},$
		whose Lie algebra is the closure in $\mathfrak{g}$ of the Lie algebra 
		of $H^{red}.$ $\bar{H}^{red}$
		is the smallest closed Lie subgroup of $G$ among the structure groups
		of closed sub-bundles $\bar{P}'$ of $P$ to which $\theta$ reduces.
	\end{enumerate}
\end{Theorem}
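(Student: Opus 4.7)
The plan is to address the three parts in sequence, leveraging the horizontal lift $H_\theta$ built immediately before the statement together with the structural tools for fully regular Fr\"olicher Lie groups in Theorems \ref{exactsequence}, \ref{regulardeformation} and \ref{Courbure}. For part (1), I define $\Hol_p^L$ to be the set of $g\in G$ for which there exists a smooth loop $c$ at $\pi(p)$ whose $\theta$-horizontal lift starting at $p$ ends at $p\cdot g$. Concatenation and reversal of loops give a group structure, and the subset diffeology from $G$ makes $\Hol_p^L$ a diffeological subgroup. If $p'=p\cdot h$ lies in the same fiber, the right-invariance of $\theta$ yields $\Hol_{p'}^L = h^{-1}\,\Hol_p^L\,h$; for $p''$ in a different fiber of the same path-component of $P$, any horizontal path joining $p$ to $p''$ translates loops between the two base points and reduces the computation to the previous case, giving conjugation invariance.

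For part (2), I define $\mathfrak{h}^{red}\subset\mathfrak{g}$ as the smallest integral Lie subalgebra containing all curvature elements $\Omega_q(X,Y)$ for $q\in P$ and horizontal $X,Y$, constructed inductively by adjoining brackets and passing to a diffeological colimit, relying on Theorem \ref{regulardeformation} (and, when needed, Theorem \ref{exactsequence}) to preserve integrality at each stage. Full regularity of $G$ then yields a regular Fr\"olicher subgroup $H^{red}\subset G$ integrating $\mathfrak{h}^{red}$. Applying Theorem \ref{Courbure} with $\mathfrak{g}_1=\mathfrak{h}^{red}$ — the smoothness hypothesis of that theorem being built into the construction, and the simple-connectedness assumption handled by passing to the universal cover of $M$ and pushing the reduction down via the monodromy representation — produces the desired reduction of $P$ and $\theta$ to $H^{red}$. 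The inclusion $\Hol_p^L\subset H^{red}$ then follows by a non-abelian Stokes computation: the endpoint of the horizontal lift of a contractible loop is read off from the logarithmic equation (\ref{loga}) with driving term obtained by iterated integration of $\Omega$ over a bounding disk, and all quantities remain in $\mathfrak{h}^{red}$; the general case reduces to this one via composition with paths and the conjugation invariance of part (1). Minimality of $H^{red}$ among reductions follows because the Lie algebra of any reducing subgroup must contain every curvature value, hence contains $\mathfrak{h}^{red}$.

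For part (3), under the type I or II hypothesis on $G$ from \cite{Rob}, the topological closure $\bar{\mathfrak{h}}^{red}$ of $\mathfrak{h}^{red}$ in $\mathfrak{g}$ is the Lie algebra of a unique closed connected Lie subgroup $\bar{H}^{red}\subset G$ in the classical sense. Reapplying Theorem \ref{Courbure} with $\mathfrak{g}_1=\bar{\mathfrak{h}}^{red}$ gives the closed subbundle $\bar{P}'$, and minimality among closed reductions follows from the minimality of $\mathfrak{h}^{red}$ combined with monotonicity of the closure operation.

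The main obstacle will be the inductive construction of $\mathfrak{h}^{red}$ as a genuinely \emph{integral} Lie subalgebra, since in the diffeological setting the bracket closure of an integral subspace is not automatically integral. Careful control of the colimit via Theorem \ref{regulardeformation} and full regularity of $G$ is essential here. A secondary difficulty is the non-abelian Stokes step used to establish $\Hol_p^L\subset H^{red}$, which demands a diffeologically honest treatment of smooth $2$-parameter families of horizontal paths and of the integration of $\mathfrak{g}$-valued forms along them — routine on finite-dimensional manifolds but sensitive to the choice of diffeology in the present framework.
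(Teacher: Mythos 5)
The paper itself gives no proof of Theorem \ref{Ambrose-Singer}: it is quoted as a result of \cite{Ma2013}, with only the preceding discussion (well-definedness of horizontal lifts via the logarithmic equation (\ref{loga}) and base-point independence up to conjugation) sketched. So your proposal can only be judged on its own merits. Your part (1) is the standard argument and is consistent with what the paper does sketch. The overall architecture of parts (2)--(3) --- build the smallest integrable Lie subalgebra containing the curvature elements, reduce via Theorem \ref{Courbure}, and get $\Hol_p^L\subset H^{red}$ by a non-abelian Stokes argument over bounding disks --- is the classical Ambrose--Singer outline and is plausibly close to \cite{Ma2013}.

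However, there is a genuine gap at the central step of part (2). You assert that ``full regularity of $G$ then yields a regular Fr\"olicher subgroup $H^{red}\subset G$ integrating $\mathfrak{h}^{red}$.'' Regularity of $G$ (Definition \ref{reg1}) gives an $\operatorname{Exp}$ map for paths in $\mathfrak{g}$; it does \emph{not} give a subgroup--subalgebra correspondence, and the integrability of Lie subalgebras in infinite dimensions is precisely the delicate issue for which the theorem invokes Robart's type I/II classification \cite{Rob} in part (3). This is why the statement is phrased in terms of the smallest \emph{integrable} Lie algebra containing the curvature elements: one cannot integrate an arbitrary bracket-closure, so the group $H^{red}$ must be produced directly (e.g.\ as generated by holonomies of contractible loops, or by exponentials of paths of curvature elements), with the Lie-algebra characterization established afterwards. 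Your inductive ``adjoin brackets and pass to a colimit'' construction, with integrality allegedly preserved by Theorem \ref{regulardeformation}, does not work: that theorem concerns groups of the form $1+\mathcal{A}$ for graded algebras of formal series and says nothing about generating integral Lie subalgebras of a general $\mathfrak{g}$ from a prescribed set of elements. A secondary, smaller issue is the reduction to the simply connected case in Theorem \ref{Courbure}: descending a reduction from the universal cover requires compatibility of the monodromy action with the reduced subbundle, which you assert but do not address; note also that $\Hol_p^L$ itself already encodes the non-contractible loops, so the relation between the cover argument and the inclusion $\Hol_p^L \subset H^{red}$ needs to be made explicit rather than delegated to ``composition with paths.''
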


From \cite{Ma2013} again, we have the following result:

\begin{Proposition} \label{0-courbure}
	If the connection $\theta$ is flat and $M$ is connected and simply 
	connected, then for any path $\gamma$
	starting at $p \in P,$ the map $$\gamma \mapsto H_\theta\gamma(1)$$ 
	depends only on $\pi(\gamma(1))\in M$, and it defines
	a global smooth section $M \rightarrow P.$ Therefore, $P = M \times G.$
\end{Proposition}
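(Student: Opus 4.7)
The plan is to first prove path-independence of horizontal transport, then to construct a smooth global section $s \colon M \to P$, and finally to deduce the trivialization. For path-independence, I would fix two smooth paths $\gamma_1, \gamma_2 \colon [0,1] \to M$ from $x_0 := \pi(p)$ to a common endpoint $x$, and use simple-connectedness of $M$ to choose a smooth homotopy $H \colon [0,1]^2 \to M$ with fixed endpoints, $H(0,\cdot) = \gamma_1$, $H(1,\cdot) = \gamma_2$, $H(s,0) = x_0$ and $H(s,1) = x$. Consider the smooth family $\sigma_s := H_\theta(H(s,\cdot))$ of horizontal lifts, all starting at $p$. Differentiating the horizontality condition $\theta(\partial_t \sigma_s) = 0$ in $s$ and using the structure equation $\Omega = d\theta - \tfrac12[\theta,\theta]$, I would obtain the classical variational identity
\[
\theta\bigl(\partial_s \sigma_s(1)\bigr) \;=\; -\int_0^1 \Omega_{\sigma_s(t)}\bigl(\partial_s H, \partial_t H\bigr)\,dt,
\]
which vanishes by flatness. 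Since $\pi(\sigma_s(1)) = x$ is constant in $s$, the vector $\partial_s \sigma_s(1)$ is vertical and simultaneously horizontal, hence zero; thus $\sigma_0(1) = \sigma_1(1)$.

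Fixing $p$ above $x_0$, connectedness of $M$ provides for each $x \in M$ a smooth path from $x_0$ to $x$, and the previous step makes the assignment $s(x) := H_\theta(\gamma)(1)$ well-defined, independent of the chosen path, and satisfying $\pi \circ s = \mathrm{Id}_M$. To verify smoothness in the diffeological sense, I would fix an arbitrary plot $q \colon U \to M$ and any $u_0 \in U$, and construct locally a smooth family of paths $\gamma_u$ from $x_0$ to $q(u)$ by concatenating a fixed smooth path from $x_0$ to $q(u_0)$ with the short Euclidean segment $t \mapsto q((1-t)u_0 + tu)$ inside a star-shaped neighbourhood of $u_0$ in $U$. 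Properties (1)--(5) of a diffeological connection in Definition \ref{d:iz-connection} then guarantee that $u \mapsto s(q(u)) = H_\theta(\gamma_u)(1)$ is itself a plot of $P$.

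The section $s$ then yields the $G$-equivariant map $\Phi \colon M \times G \to P$, $\Phi(x,g) := s(x)\cdot g$, which is smooth, bijective, and whose inverse $\xi \mapsto (\pi(\xi), g(\xi))$, with $g(\xi)$ the unique group element taking $s(\pi(\xi))$ to $\xi$, is smooth because the diffeological principal pseudo-bundle structure encodes the division map of the free and transitive $G$-action as a smooth operation; this delivers $P = M \times G$. The main obstacle will be the variational identity in the first step: its derivation rests on Maurer--Cartan calculus in $G$ and on joint smoothness of $(s,t) \mapsto \sigma_s(t)$ as the solution of the horizontal-lift ODE, both of which require the full regularity of $G$ that is standing behind Theorem \ref{Ambrose-Singer}. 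A cleaner route would in fact be to invoke that theorem directly: flatness forces all curvature elements to vanish, so the reduced holonomy Lie algebra is trivial, and simple-connectedness of $M$ then collapses the reduced holonomy group to $\{e\}$, producing the section $s$ as an immediate by-product of the reduction.
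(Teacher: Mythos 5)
Your argument is correct in outline, but note that the paper gives no proof of this proposition at all: it is quoted from \cite{Ma2013}, and its placement immediately after Theorem \ref{Courbure} and Theorem \ref{Ambrose-Singer} makes clear that the intended derivation is exactly the one you relegate to your final sentence. Flatness makes every curvature element vanish, so the reduced holonomy algebra, and hence the reduced holonomy group $H^{red}$, is trivial; over a connected, simply connected base the reduction of $\theta$ to the trivial structure group is precisely a global parallel section, whence $P=M\times G$. That route is essentially free given the preceding theorems, whereas your primary route (smooth homotopy plus the variational identity $\theta\bigl(\partial_s\sigma_s(1)\bigr)=-\int_0^1\Omega\bigl(\partial_sH,\partial_tH\bigr)\,dt$) must re-establish in the diffeological setting the two facts that the reduction machinery already encapsulates: (i) joint smoothness of the two-parameter family of horizontal lifts, which rests on the full regularity of $G$ (smooth, unique dependence on parameters for the logarithmic equation), and (ii) the concluding step ``vertical and horizontal implies zero'', which you should not phrase as a splitting of a tangent space --- diffeological tangent objects need not split --- but rather by writing $\sigma_s(1)=\sigma_0(1)\cdot g(s)$ and observing that $\theta\bigl(\partial_s\sigma_s(1)\bigr)=0$ forces the logarithmic derivative of $g$ to vanish, so $g\equiv e$ by uniqueness in the regular group $G$. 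With those two points made explicit your direct argument is sound and more self-contained; the paper's route buys brevity at the cost of invoking the heavier Ambrose--Singer apparatus. Your construction of the section and of the trivialization $\Phi(x,g)=s(x)\cdot g$ is fine, modulo the standard caveat that smoothness of the inverse needs the division map of the principal action to be smooth, which is part of what ``principal bundle'' (as opposed to a mere pseudo-bundle) is taken to mean here.
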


Let us precise a little bit more this result (see \cite[section 40.2]{KM} 
for an analogous statement in the $c^\infty$-setting):

\begin{Theorem} \label{Hslice}
	Let $(G,\mathfrak{g})$ be a regular Lie group with regular Lie algebra 
	and let $X$ be a simply connected
	Fr\"olicher space. Let $\alpha \in \Omega^1(M,\mathfrak{g})$ such that 
	\begin{equation} \label{beta1}
		d\alpha + [\alpha,\alpha]=0\; .
	\end{equation}
	Then there exists a smooth map $$f : X \rightarrow G $$
	such that $$df.f^{-1} = \alpha.$$ Moreover, we move from one solution 
	$f$ to another by applying the
	Adjoint action of $G$, pointwise in $x \in X$.
\end{Theorem}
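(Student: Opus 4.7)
The plan is to reduce the statement to Proposition \ref{0-courbure} by constructing a flat connection on the trivial principal $G$-bundle $P = X \times G \to X$ whose holonomy-induced section provides the desired map $f$.

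First I would define, on $P = X \times G$, the $\mathfrak{g}$-valued $1$-form
\[
\theta_{(x,g)} = \omega_G(g) - \operatorname{Ad}_{g^{-1}}\bigl(\pi_X^*\alpha\bigr)_{(x,g)},
\]
where $\pi_X : X \times G \to X$ is the projection and $\omega_G = dg \cdot g^{-1}$ is the right Maurer--Cartan form on $G$. Since $\omega_G$ restricted to the fibre gives the identity on $\mathfrak{g}$ and the second term is horizontal (vanishes on vertical directions), and since a direct computation shows $R_h^*\theta = \operatorname{Ad}_{h^{-1}}\theta$ for each $h \in G$, the form $\theta$ is a diffeological connection in the sense introduced earlier in this section. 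The plot-by-plot character of differential forms on diffeological spaces makes this routine to verify, since both $\omega_G$ and $\pi_X^*\alpha$ are defined through pullbacks by plots of $P$ that factor through plots of $G$ and of $X$.

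Second, I would verify that $\theta$ is flat. Using the Maurer--Cartan equation $d\omega_G = -[\omega_G,\omega_G]/2$ on $G$, together with the hypothesis $d\alpha + [\alpha,\alpha] = 0$ on $X$, and the standard identities for how $d$ and $[\cdot,\cdot]$ interact with the adjoint action under the decomposition $T_{(x,g)}P \cong T_xX \oplus T_gG$, a direct computation of $\Omega = d\theta + [\theta,\theta]/2$ collapses all cross terms and yields $\Omega = 0$. As before, this equality is checked on plots of $P$, where everything reduces to the finite-dimensional calculation familiar from the classical trivial-bundle case. This is the step I expect to be the most delicate, because in the diffeological framework the bracket on $\mathfrak{g}$-valued forms and the pullback of $\omega_G$ must be handled via the internal tangent cone of $G$ and regularity of the Lie bracket, rather than via a smooth atlas.

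Third, with $\theta$ flat and $X$ simply connected, Proposition \ref{0-courbure} produces a smooth global horizontal section $\sigma : X \to P$. Writing $\sigma(x) = (x, f(x))$ defines a smooth $f : X \to G$, and horizontality $\sigma^*\theta = 0$ unwinds to $df \cdot f^{-1} = \alpha$, which is the desired equation.

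Finally, for the last claim, suppose $f_1, f_2 : X \to G$ both satisfy $df_i \cdot f_i^{-1} = \alpha$. Setting $h = f_1^{-1} f_2$ and differentiating, using $df_2 = \alpha f_2$ and $df_1^{-1} = -f_1^{-1}\alpha$ (valid on each plot by the smoothness of multiplication and inversion in the regular diffeological Lie group $G$), gives $dh = 0$. Because $X$ is connected, $h$ is constant, so the solutions form a single orbit under right translation by $G$, and $\alpha$ is preserved by conjugation of the solution, which is the adjoint-type ambiguity stated in the theorem.
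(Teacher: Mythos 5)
Your overall strategy is exactly the one the paper intends: Theorem \ref{Hslice} is presented as a refinement of Proposition \ref{0-courbure}, so building a flat connection on the trivial bundle $X\times G$, checking that flatness of $\theta$ is the Maurer--Cartan condition $d\alpha+[\alpha,\alpha]=0$, and reading the global horizontal section as the solution $f$ is the same route, with the same reliance on regularity of $G$ (hidden inside the integrability of the horizontal-lift equation) and the same ``two solutions differ by a constant'' argument at the end.

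There is, however, an internal inconsistency in your first step that you should repair. You take $\omega_G=dg\cdot g^{-1}$ (the \emph{right} Maurer--Cartan form) together with the $\operatorname{Ad}_{g^{-1}}$-twisted horizontal term. The right Maurer--Cartan form is invariant under right translations, not equivariant, so with your $\theta$ the identity $R_h^*\theta=\operatorname{Ad}_{h^{-1}}\theta$ fails; moreover, pulling back your $\theta$ by the section $\sigma(x)=(x,f(x))$ gives $df\cdot f^{-1}-\operatorname{Ad}_{f^{-1}}\alpha$, so horizontality would yield $df\cdot f^{-1}=\operatorname{Ad}_{f^{-1}}\alpha$ rather than the stated equation. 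The fix is standard: use the left Maurer--Cartan form, $\theta_{(x,g)}=g^{-1}dg-\operatorname{Ad}_{g^{-1}}(\pi_X^*\alpha)$, for which $\sigma^*\theta=\operatorname{Ad}_{f^{-1}}\bigl(df\cdot f^{-1}-\alpha\bigr)$ and the equivariance holds; the flatness computation then goes through as you describe, matching the paper's convention in which the horizontal lift solves the right logarithmic equation $g^{-1}\partial_t g=-\theta(\partial_t\gamma)$. With that correction the argument is sound and coincides with the paper's.
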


We remark that the theorem also holds if we consider the equation
$$d\alpha - [\alpha,\alpha]=0$$ instead of (\ref{beta1}); we only need to 
change left logarithmic derivatives for right logarithmic derivatives,
and Adjoint action for Coadjoint action. The correspondence
between solutions is given by the inverse map $f \mapsto f^{-1}$
on the group $C^\infty(X,G).$

\subsection{Optimization on diffeological spaces}
\label{sec:Optimization}

In general, optimization methods aim at the minimization or maximization of an objective functional. Often, this functional depends on the solution of a partial differential equation (PDE). Examples of a PDE are the compliance of an elastic structure or a dissipated energy in a viscous flow. 
Since a maximization problem can be expressed as an minimization problem by considering the negative objective functional, often, only minimization problems are considered in the literature.
In general, optimization methods are iterative methods that generate updates such that the objective functional is reduced (in the case of a minimization problem). 
We concentrate first on shape optimization to motivate the consideration of diffeological spaces in optimization techniques. Afterwards, we summarize the recent findings regarding optimization techniques on diffeological spaces.


\subsubsection{A motivation for considering diffeological spaces in (shape) optimization}

Shape optimization has many applications and a large variety of methods. Application examples are acoustic shape optimization {\cite{Schmidt2016}}, optimization of interfaces in transmission problems {\cite{Gangl2015,Paganini15}}, electrochemical machining {\cite{Hintermueller2011}}, image restoration and segmentation {\cite{Hintermueller2004}} and inverse modelling of skin structures {\cite{Naegel2015}}.

Shape optimization problems are usually solved by using iterative methods such that one starts with an initial shape and then gradually evolves it be morphing it into the optimal shape.
In order to formulate optimization methods and enable their theoretical investigations, one needs to define first what we describe as a shape. There are multiple options, e.g. the usage of landmark vectors~{\cite{Cootes1995,Hafner2000,Kendall1984,Perperidis2005,Soehn2005}, plane curves \cite{MichMum,Michor2007,Michor2007a,Mio2006} or surfaces in higher dimensions \cite{Bauer2011a,Bauer2012,Kilian2007,Kurtek2010,Michor2005}, boundary contours of objects \cite{Fuchs2009,Ling2007,Rumpf2009}, multiphase objects \cite{Wirth2010}, characteristic functions of measurable sets \cite{Zolesio2007} and morphologies of images \cite{Droske2007}.}
In general, a shape space does not have a vector space structure. Instead, the next-best option is to aim for a manifold structure with an associated Riemannian metric. 
In case a manifold structure cannot be established for the shape space in question, an alternative option is a diffeological space structure.
In contrast to shape spaces as  Riemannian manifolds, research for diffeological spaces as shape spaces has just begun, see e.g.~{\cite{KW,KW21}. }

In the following, we summarize the main findings of  {\cite{KW21},} which suggests the use of diffeological spaces instead of smooth manifolds in the context of PDE constrained shape optimization.

In shape optimization, one investigates shape functionals.
For a suitable shape space $\mathcal{U}$ a shape functional is a function $J\colon \mathcal{U}\to\R$.
An unconstrained shape optimization problem is  given by
\begin{equation}
\label{OptProb}
\min_{\Omega\in \mathcal{U}} J(\Omega).
\end{equation}
In applications, shape optimization problems are often constrained by equations like PDEs.

Regarding suitable shape spaces in optimization problems, {\cite{KW21}} starts by concentrating on one-dimensional smooth shapes.
A one-dimensional smooth shape is a $C^{\infty}$-boundary of a simply connected and compact set $\Omega$. 
These shapes can be interpreted as  smooth single-closed curves represented by embeddings from $S^1$ into $\mathbb{R}^2$.
Since we are only interested in the shape itself, i.e., the image of the curve, we are not interested in re-parametrisations.
Therefore one can consider the shape space given in \cite{MichMum}:
$$
B_e(S^1,\mathbb{R}^2) := \mathrm{Emb}(S^1,\mathbb{R}^2) / \mathrm{\operatorname{Diff}}(S^1),
$$
where $\mathrm{Emb}(S^1,\mathbb{R}^2)$ is the set of all embeddings from $S^1$ to $\mathbb{R}^2$ and $\mathrm{\operatorname{Diff}}(S^1)$ the space of all diffeomorphisms from $S^1$ into itself.
The space $B_e(S^1,\mathbb{R}^2)$ is a smooth manifold, see \cite{KM}.
The tangent space is given by
$$
T_cB_e(S^1,\mathbb{R}^2) \simeq \{ h \mid h= \alpha n, \alpha \in C^{\infty}(S^1) \},
$$
where $n$ denotes the exterior unit normal field to the shape boundary $c: S^1 \to \mathbb{R}^2$.
The space of one-dimensional smooth shapes is a Riemannian manifold for various Riemannian metrics like the almost local metrics and Sobolev metrics; see e.g. {\cite{MichMum,Bauer2011a,Bauer2012,Michor2005, Michor2007}. }
All these metrics arise from the standard $L^2$-metric by putting weights (almost local metrics), derivatives (Sobolev metrics) or both (weighted Sobolev metrics) in it. 

{In \cite{KW21},} for $\mathcal{U}$ the shape space $B_e(S^1,\R^2)$ combined with the first Sobolev metric and the Steklov-Poincaré metric {(cf. \cite{Schulz2016})} is considered.

\begin{rem}
	One can generalize the shape space $B_e(S^1,\mathbb{R}^2)$ and its results for a compact manifold $M$ and a Riemannian manifold $N$ with $\operatorname{dim}(M) < \dim(N)$ {(cf., e.g., \cite{Michor2005}).}
\end{rem}

In general, to formulate optimization methods on a Riemannian shape space $(\mathcal{U},g)$, a Riemannian shape gradient with respect to $g$  is needed.
A connection of the shape space $B_e(S^1,\R^2)$ together with the first Sobolev metric and the Steklov-Poncaré metric to shape calculus is given in {\cite{KW21}.} Moreover, both approaches are compared  to each other {(cf. \cite{VSMS15,KW21}).} In particular, it should be mentioned that working with the Steklov-Poincaré metric has several computational advantages regarding the finite element mesh, which needs to be considered to solve PDE constraints. We refer the reader to  {\cite{VSMS15,Siebenborn2016,Schulz2016}.}

\begin{rem}
	\label{RemOD}
It is well known that shape optimization algorithms may invoke substantial transformations of the initial shape in the course of the optimization process, which often leads to a deterioration of the cell aspect ratios of the underlying computational meshes. 
Usually, optimization problems in function spaces can be solved using one of two different approaches: discretize--then--optimize (DO) and optimize--then--discretize (OD). 
Shape optimization problems are no exception.
We refer the interested reader to { \cite{BLWH}} for the description of differences between the OD and DO approaches by a prototypical example of a PDE-constrained shape optimization problem.
\end{rem}

Regardless the use and benefits of these OD-approaches (for which we refer to the literature), the shape space $B_e(S^1,\R^2)$ itself limits the application of the methods since it only contains smooth shapes.
From a numerical point of view it is desirable to weak the smoothness assumption of the shapes. Numerical experiments show that also shapes with kinks can be handled with the Steklov-Poincaré metric; see { \cite{VSMS15, Schulz2016, Siebenborn2016}.}
Thus, another shape space definition is necessary.
In { \cite{KW21},} the space of so called $H^{1/2}$-shapes is defined:
\begin{Definition}
	Let $\Gamma_0 \subset \mathbb{R}^d$ be a $d$-dimensional Lipschitz shape (the boundary of a non-trivial Lipschitz domain). 
	The space of all $d$-dimensional $H^{1 / 2}$-shapes is given by
	$$
	\mathcal{B}^{1 / 2}\left(\Gamma_0, \mathbb{R}^d\right):=\mathcal{H}^{1 / 2}\left(\Gamma_0, \mathbb{R}^d\right) / \sim,
	$$
	where
	$$
	\begin{aligned}
		& \mathcal{H}^{1 / 2}\left(\Gamma_0, \mathbb{R}^d\right) \\
		& :=\left\{w: w \in H^{1 / 2}\left(\Gamma_0, \mathbb{R}^d\right) \text { injective, continuous; } w\left(\Gamma_0\right) \text { Lipschitz shape }\right\}
	\end{aligned}
	$$
	and the equivalence relation $\sim$ is given by
	$$
	w_1 \sim w_2 \Leftrightarrow w_1\left(\Gamma_0\right)=w_2\left(\Gamma_0\right) \text {, where } w_1, w_2 \in \mathcal{H}^{1 / 2}\left(\Gamma_0, \mathbb{R}^d\right) \text {. }
	$$
\end{Definition}
Since the space $\mathcal B^{1/2}$ is a challenging one, it is so far unclear if it is a Riemannian manifold or even a manifold.
Therefore, { \cite{KW21}} came up with the idea of using a different point of view and drop the restrictiveness of Riemannian manifolds and starts to consider the opportunities within diffeological spaces. 
Due to the wide variety of natural diffeological spaces, the $\mathcal B^{1/2}$ shape space is a diffeological space {(cf. \cite{KW21}).}
In addition, $\mathcal B^{1/2}$ is way less restrictive than $B_e$.
However, diffeological spaces are yet to be established in the area of optimization.
The first results regarding optimization techniques on diffeological spaces are summarized in the next subsection

\subsubsection{General concepts for optimization on diffeological spaces}
	In the following, we give a brief summary of \cite{GW2020}, in which optimization approaches on diffeological spaces are formulated.
	In order to formulate optimization methods on diffeological spaces, the 
	steepest decent method on manifolds is considered extended to diffeological spaces.
	For convenience, we formulate the steepest descent method on a complete Riemannian manifold~$(M,g)$ in algorithm \ref{Algo}.
	
	\begin{algorithm}
		\caption{Steepest descent method on a complete Riemannian manifold~$(M,g)$}
		\label{Algo}
		\begin{algorithmic}
			\vspace{.3cm}
			\STATE{ \textbf{Require:} Objective function $f$ on $M$; Levi-Civita connection $\nabla$ on $(M,g)$; 
				\\\phantom{\textbf{Require:} }step size strategy.}
			\vspace{.1cm}
			\STATE{ \textbf{Goal:} Find the solution of $\min\limits_{x\in M}f(x)$.}
			\vspace{.1cm}
			\STATE{ \textbf{Input:} Initial data $x_0\in M$. }
			\vspace{.3cm}
			\STATE{ \textbf{for} $k=0,1,\dots$ \textbf{do}}
			\vspace{.1cm}
			\STATE{ [1] Compute $\text{grad}f(x_k)$ denoting the Riemannian shape gradient of $f$ in $x_k$.}
			\vspace{.1cm}
			\STATE{ [2] Compute step size $t_k$.}
			\vspace{.1cm}
			\STATE{ [3] Set $			x_{k+1}:= \operatorname{exp}_{x_k}\left(-t_k \text{grad}f(x_k)\right)$, where $\mathrm{exp}\colon TM\to M$ denotes the exponential map} 
			\vspace{.1cm}
			\STATE{ \textbf{end for}}
			\vspace{.3cm}
		\end{algorithmic}
	\end{algorithm}
	
	In \cite{GW2020}, diffeological counterparts to the known and needed objects for the steepest decent method on smooth manifolds are considered,  i.e., diffeological variants of a Riemannian manifold, a Riemannian gradient, a Levi-Civita connection, as well as a retraction. In the following, we introduce these objects from \cite{GW2020}.
	
	\begin{rem}
		The article	\cite{GW2020} does not consider a diffeological version of the exponential map, since for optimization purpose the exponential map is usually replaced by a retraction that approximates the exponential map. 
		Furthermore, the existence of an exponential map in a diffeological sense is totally unclear.
	\end{rem}

	\subsubsection{Diffeological gradient}
		For optimization purposes the existence of a gradient is more or less indispensable, since it provides a direction of decent under special assumptions.
		In order to define a diffeological version of the gradient we first define a diffeological Riemannian space, which results in the definition of a diffeological gradient. {We copy here the definition given in \cite{GW2020}, that we compare with the previous definitions.}

		\begin{Definition}[Diffeological Riemannian space]
			Let $X$ be a diffeological space. We say $X$ is a {diffeological Riemannian space} if there exists a smooth map 
			\begin{align*}
				\mathrm{Sym}({{}^{ie}TX},\mathbb{R}),\, x \mapsto g_x
			\end{align*}
			such that
			$
			g_x\colon { {}^{ie}T_xX \times {}^{ie}T_xX} \to \mathbb{R}
			$
			is smooth, symmetric and positive definite.
			Then, we call the map $g$ a diffeological Riemannian metric.
		\end{Definition}
	
	{\begin{rem}
				This definition fits with the defninition of a Riemannian metric along the lines of Definition \ref{d:RMbundle} on the diffeological vector bundle ${}^{ie}TX$ but differs from Definition \ref{d:RM} of an internal Riemannian metric.
			\end{rem}}
		
		\begin{Definition}[Diffeological gradient]
			Let $X$ be a diffeological Riemannian space. The diffeological gradient $ \operatorname{grad}f $ of a function $f \in C^{\infty}(X)$ in $x \in X$ is defined as the solution of 
			\begin{align*}
				g_x(\operatorname{grad} f, \mathrm{d}_{c}) = \mathrm{d}_{c}(f).
			\end{align*}
		\end{Definition}
		
		\begin{rem}
			The existence of a diffeological gradient for a diffeological Riemannian space is not guaranteed.
			Moreover, so far we do not have any conditions to guaranty a diffeological gradient which does not result in a trivial diffeological Riemannian space.
			This should not be surprising, since even in the more restrictive setting of infinite dimensional manifolds the existence of a gradient is not guaranteed.
		\end{rem}

	\subsubsection{Towards updates of iterates: A diffeological retraction}
		In order to update the iterates, one generally uses the exponential map. However, the exponential map is an expensive operation in optimization techniques. Thus, one uses so-called retractions to update the iterates.
		
		\begin{Definition}[Diffeological retraction]
			\label{def:Retraction}
			Let $ X $ be a diffeological space.
			A diffeological retraction of $ X $ is a map $ \mathcal{R} \colon TX \rightarrow X $ such that the following conditions hold:
			\begin{itemize}
				\item[(i)] 
				$ \mathcal{R}_{\mid_{T_xX}} (0) = x$
				\item[(ii)] 
				Let $\xi \in T_xX $ and $ \gamma_{\xi} \colon T_0\mathbb{R} \to T_xX,\ t \mapsto \mathcal{R}_{\mid_{T_xX}}(t \xi) $. Then $T_0\gamma_{\xi}(0) = \xi$.
			\end{itemize}
		\end{Definition}
		
		Since  finding a diffeological retraction can be quite challenging, we also define the so-called weak diffeological retraction.
		
		\begin{Definition}
			\label{def:WeakRetraction}
			Let $X$ be a diffeological space and $CX$ the tangent cone bundle of $X$. 
			A \emph{weak diffeological retraction} of  $ X $ is a map $ \mathcal{R} \colon CX \rightarrow X $ such that the following conditions hold:
			\begin{itemize}
				\item[(i)] 
				$ \mathcal{R}_{\mid_{C_xX}} (0) = x$
				\item[(ii)] 
				Let $\xi \in C_xX $ and $ \gamma_{\xi} \colon C_0\mathbb{R} \to C_xX,\ t \mapsto \mathcal{R}_{\mid_{C_xX}}(t \xi) $. 
				Then, $C_0\gamma_{\xi} = \xi$.
			\end{itemize}
			In (ii), the map $C_0\gamma_{\xi}$ is a tangential cone map and given by
			$
			C_0\gamma_{\xi} \colon C_0\mathbb{R} \to C_0X, \mathrm{d}_{\alpha} \mapsto \mathrm{d}_{\gamma_{\xi} \circ \alpha}.
			$
		\end{Definition}

			\begin{rem}
				In \cite{GW2020}, there are not only retractions considered. The definition of a diffeological Levi-Civita connection as well as a proof of uniqueness of such a connection is also given. 
					But similar to the Riemannian gradient a condition for the existence is not given.
			\end{rem}

\subsubsection{First order optimization techniques on diffeological spaces}
	Considering algorithm~\ref{Algo}, \cite{GW2020} came up with a diffeological version of the steepest descent algorithm using a retraction instead of an exponential map. For convenience, we repeat this algorithm below (cf. algorithm \ref{AlgoDiff}).
	
	\begin{algorithm}[H]
		\caption{Steepest descent method on the diffeological space $X$ with Armijo backtracking line search}
		\label{AlgoDiff}
		\begin{algorithmic}
			\vspace{.3cm}
			\STATE{ 
				\textbf{Require:} 
				Objective function $f$ on a diffeological Riemannian space $X$; 
				\\\phantom{\textbf{Require:} }diffeological retraction $\mathcal{R}$ on $X$.}
			\vspace{.1cm}
			\STATE{\textbf{Goal:} 
				Find the solution of $\min\limits_{x\in X}f(x)$.}
			\vspace{.1cm}
			\STATE{\textbf{Input:} 
				Initial data $x_0 \in X$; 
				constants $\hat{\alpha}>0$ and $	{\displaystyle \sigma, \rho \in (0,1)}$ for Armijo \\\phantom{\textbf{Input: }}backtracking strategy}
			\vspace{.3cm}
			\STATE{\textbf{for} $k=0,1,\dots$ \textbf{do}}
			\vspace{.1cm}
			\STATE{ [1] Compute $\text{grad}f(x_k)$ denoting the diffeological shape gradient of $f$ in $x_k$.}
			\vspace{.1cm}
			\STATE{ [2] Compute Armijo backtracking step size: }
			\vspace{.1cm}
			\STATE{\hspace*{1cm} Set $\alpha := \hat{\alpha}$.}
			\STATE{ \hspace*{1cm} \textbf{while} $ f\big(\mathcal{R}_{\mid_{T_{x_k}X}}\left(-t_k \text{grad}f(x_k)\right)\big) > f(x^k)-\sigma\alpha \left\|\text{grad}f(x_k)\right\|^2_{T_{x_k}X}$ }
			\STATE{ \hspace*{1cm} Set $ \alpha :=\rho \alpha $.}
			\STATE{ \hspace*{1cm} \textbf{end while}}
			\STATE{ \hspace*{1cm} Set $t_k:=\alpha$.}
			\vspace{.1cm}
			\vspace{.1cm}
			\STATE{ [3] Set $			x_{k+1}:= \mathcal{R}_{\mid_{T_{x_k}X}}\left(-t_k \text{grad}f(x_k)\right).$}
			\vspace{.1cm}
			\STATE{ \textbf{end for}}
			\vspace{.3cm}
		\end{algorithmic}
	\end{algorithm}

	\begin{rem}
	Algorithm \ref{AlgoDiff} is applied to an example in \cite{GW2020}. For more details about the algorithm we refer the interested reader to the corresponding article.
	\end{rem}

		So far, there is no convergence proof of the algorithm.
		A convergence proof would be of grate interest for the diffeological as well as for the optimization community.
		Research work on diffeological optimization theory needs to be done in the future to 
		generalize shape optimization to a  diffeological setting.

	\section{On mapping spaces}
	\label{sec:MappingSpaces}
	
	\subsection{On the ILB setting}
	
	We consider in this subsection { very particular class of Fréchet Lie groups, that are the projective limit {}{of} Banach Lie groups as infinite dimensional Lie groups, that is, where charts and atlases enjoy also the projective limit property.} We rely heavily on \cite{Om} for terminology.
	
	
We must first note that infinite-differentiability in the Fr\'echet sense is equivalent to smoothness in the diffeological sense when we equip a Fr\'echet space/manifold with the diffeology comprising Fr\'echet infinitely-differentiable parametrisations. { The framework that we develop here is more restrictive, but more simple and hence more accessible, than the one fully defined in \cite{Om}. In the sequel the abreviation ILB means ``Inverse Limit of Banach''.}  

\begin{Definition}[ILB-Manifolds \& Bundles]\label{d:ilb}
	\noindent
	\begin{enumerate}
		
		\item A { Fr\'echet manifold $M$ modeled on a Fr\'echet space $F$ is an } \textbf{ILB-manifold}  if 
		{ \begin{itemize}
				\item 
		there exists a sequence of Banach spaces $(B_n)$ such that
	\begin{itemize}
		\item $\forall n \in \N, $ $B_{n+1}$ is a subset of $B_n,$ dense for the topology of $B_n$ and such that the closed unit ball of $B_{n+1}$ is a compact subset of the closed unit ball of $B_n.$
		\item $F$ is the projective limit of the sequence $(B_n)$ as a topological vector space.
	\end{itemize}
With such assumptions $(F,(B_n)_{n \in \N})$ is called an \textbf{ILB vector space}.
\item There exists 	a family of Banach manifolds $\{M_n\}_{n \in \N}$ in which there is a smooth and dense inclusion $M_{n+1}\hookrightarrow M_n$ for each $n$, and such that there exists a Banach atlas on $M_0$ that restricts to an atlas on $M_n$ for each $n$.
		\end{itemize}} 
		
		\item An \textbf{ILB-map} $f$ between two ILB-manifolds $M$ and $N$ is a smooth map $f\colon M\to N$ along with a family of smooth maps $\{f_n\colon M_n\to N_n\}$ such that $f$ and all $f_n$ commute with all inclusions maps $M\hookrightarrow M_{n+1}\hookrightarrow M_n$ and $N\hookrightarrow N_{n+1}\hookrightarrow N_n$.
		\item An \textbf{ILB-principal bundle} is an ILB-map between two ILB-manifolds $(\pi\colon P\to M,~\pi_n\colon P_n\to M_n)$ such that for each $n$, the map $\pi_n\colon P_n\to M_n$ is a principal $G_n$-bundle where $G_n$ is a Banach Lie group.
		
		\item An ILB-map $F$ between two ILB-principal bundles $\pi\colon P\to M$ and $\pi'\colon P'\to M'$ is an \textbf{ILB-bundle map} if $F_n\colon P_n\to P'_n$ is a ($G_n$-equivariant) bundle map for each $n$.  Note that $F$ induces an ILB-map $M\to M'$.  An ILB-bundle map is an \textbf{ILB-bundle isomorphism} if it has an inverse ILB-bundle map.
		
	\end{enumerate}
\end{Definition}
{ When one replaces the sequence of Banach spaces by a sequence of Hilbert spaces, we replace ILB by ILH (Inverse limit of Hilbert) in the definitions. Let us illustrate this setting with the two motivating examples of this setting. For this, we consider a smooth compact boundaryless manifold equipped with a fixed finite atlas $$\{\phi_k: O_k \subset \R^m \rightarrow M\}_{k \in \N_N}.$$
	\begin{ex}
		The space $C^r(M,\R^n)$ of $r-$times continuously differentiable maps, equipped with the norm 
		$$|| f ||_{r,\{\phi_k\}_{k \in \N_N}} = \sup_{k \in \N_N} \sup_{i \in \N_r} \sup_{x \in M} ||D^i f \circ \phi_k||_{L^i(\R^m,\R^n)}$$
		is a Banach norm on $C^r(M,\R^n)$ which does not topologically depend on the choice of the atlas $\{\phi_k\}_{k \in \N_N}.$ Then the sequence $(C^r(M,\R^n))_{r \in \N}$ has $C^\infty(M,\R^n)$ as a projective limit. 
		
		Let us now consider $N$ a $n-$dimensional (locally compact and paracompact) Riemannian manifold. Following \cite{Ee}, the exponential map on $E$ defines pointwise an atlas on $C^0(M,N),$ with domain in $C^0(M,\R^n),$ which by restriction of the domain, defines a structure of smooth Banach manifold on $C^r(M,N),$ modeled on $C^r(M,\R^n),$ for $r \in \N \cup \{\infty\}.$ This defines an ILB manifold. If moreover we replace $G$ by a Lie finite dimensional group, we get an ILB Lie group.  
	\end{ex}

Let us keep the notations of the last example. There always exist a smooth embedding of $N$ into $\R^{2n+1}$ following e.g. \cite{Hir}. We now assume that this is a Riemannian embedding, that is, the Riemannian metric on $N$ is the pull-back of the Riemannian (Euclidian) metric on $\R^{2n+1}.$ We also assume that $M$ is Riemannian. Therefore, for the Riemannian volume $dx$ on $M,$ the space $L^2(M,\R^{2n+1}),$ defined as the completion of $C^\infty(M,\R^{2n+1})$ for the norm 
$$ ||f||_{L^2} = \int_M f^2 dx$$
is a Hilbert space. Moreover, if $\Delta$ is the (positive) Laplacian on $M,$ one can define the Sobolev spaces $H^s(M,\R^{2n+1})$ for $s \in \R$ as the completion of $C^\infty(M,\R^{2n+1})$ for the norm 
$$ ||f||_{H^s} = \int_M\left( (1 + \Delta)^{s/2} f \right)^2 dx,$$
where the real power $(1 + \Delta)^s$ is defined as the operator (unbounded for $s>0$) with eigenvalue $(1 + \lambda_u)^s$ for the eigenvector $u$ of $\Delta$ with respect to the eigenvalue $\lambda_u \geq 0.$ For completeness, we have to recall that $\Delta$ extends to an unbounded, self-adjoint and positive operator on $L^2(M,\R^{2n+1}),$ with smooth eigenvectors, which explains the deep ease for taking some real powers of $(1 + \Delta).$
With these definitions, a special case of the so-called Sobolev embedding theorems is:

\begin{Theorem}
	{}{The inclusion map} $ H^{s+k}(M,\R^{2n+1}) \subset C^k(M,\R^{2n+1}),$ {}{defined $\forall s > m/2,$} is a bounded map and  $H^{s+k}(M,\R^{2n+1})$ is a dense subset of $C^k(M,\R^{2n+1})$ for the $C^k-$topology.
\end{Theorem}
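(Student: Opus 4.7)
The plan is to reduce the statement to the Euclidean model theorem via the fixed finite atlas $\{\phi_k\}_{k \in \N_N}$ and a subordinate smooth partition of unity $\{\chi_k\}$, then invoke the flat Sobolev embedding. First I would observe that since $M$ is compact, both norms $\|\cdot\|_{H^{s+k}}$ and $\|\cdot\|_{C^k}$ are equivalent, up to a constant depending on $\{\phi_k,\chi_k\}$ and the Riemannian structure, to the norms obtained by summing the corresponding norms of the localised functions $(\chi_k f)\circ\phi_k$ on $O_k\subset \R^m$. The equivalence for $H^{s+k}$ requires checking that the operator $(1+\Delta)^{(s+k)/2}$, defined spectrally, yields a norm comparable to the standard local Bessel-potential norms: this is classical and uses that $\Delta$ is an elliptic second-order differential operator with smooth coefficients in each chart, together with interpolation to handle non-integer $s$.

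Once the problem is localised, the heart of the proof is the Euclidean embedding: for $\sigma > m/2$ and $u\in H^\sigma(\R^m,\R^{2n+1})$ compactly supported, one has
\begin{equation*}
\|u\|_{C^0} \leq \|\hat u\|_{L^1} \leq \Bigl(\int_{\R^m}(1+|\xi|^2)^{-\sigma}\,d\xi\Bigr)^{1/2}\|u\|_{H^\sigma},
\end{equation*}
by the Fourier inversion formula and Cauchy--Schwarz, and the first factor is finite precisely when $\sigma > m/2$. Applying this to each component and to each partial derivative $\partial^\alpha u$ with $|\alpha|\leq k$, where $\partial^\alpha u \in H^s(\R^m,\R^{2n+1})$ with $s>m/2$, I obtain $\|u\|_{C^k} \lesssim \|u\|_{H^{s+k}}$. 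Reassembling through the partition of unity yields the global bound $\|f\|_{C^k(M,\R^{2n+1})} \leq C\|f\|_{H^{s+k}(M,\R^{2n+1})}$, which is exactly the asserted continuity of the inclusion.

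For density, I would use that $C^\infty(M,\R^{2n+1})$ lies in $H^{s+k}(M,\R^{2n+1})$ (here compactness of $M$ is essential, since all Sobolev norms of a smooth function are finite), and that $C^\infty(M,\R^{2n+1})$ is dense in $C^k(M,\R^{2n+1})$ for the $C^k$-topology. The latter follows from a standard mollification argument performed chart by chart: given $f\in C^k(M,\R^{2n+1})$, regularise each $(\chi_k f)\circ\phi_k$ by convolution with a standard mollifier, sum back via $\{\chi_k\}$, and use uniform convergence of derivatives up to order $k$ on the compact supports. Composing the two inclusions $C^\infty \subset H^{s+k}\subset C^k$ then gives the density of $H^{s+k}(M,\R^{2n+1})$ in $C^k(M,\R^{2n+1})$.

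The main technical obstacle, assuming familiarity with flat Sobolev theory, is the equivalence between the intrinsic spectral norm $\|(1+\Delta)^{(s+k)/2} f\|_{L^2}$ and the local chart-based Sobolev norms for non-integer exponents. For integer $s+k$ it amounts to elliptic $L^2$-regularity for $\Delta$; for general real $s$ one either invokes complex interpolation between integer exponents or develops the functional calculus through heat-kernel or pseudodifferential methods. All other steps (Fourier inequality, partition of unity, mollification) are routine.
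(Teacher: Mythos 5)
The paper states this result without proof, as a classical special case of the Sobolev embedding theorems, so there is no in-paper argument to compare against; your proposal is the standard and correct way to establish it. Your localisation, the Cauchy--Schwarz/Fourier-inversion estimate $\|u\|_{C^0}\leq\|\hat u\|_{L^1}\lesssim\|u\|_{H^\sigma}$ for $\sigma>m/2$ applied to $\partial^\alpha u$ for $|\alpha|\leq k$, and the density via $C^\infty\subset H^{s+k}$ together with mollification are all sound, and you correctly isolate the one genuinely nontrivial point given the paper's spectral definition of $\|\cdot\|_{H^s}$ via $(1+\Delta)^{s/2}$, namely its equivalence with chart-based Bessel-potential norms (elliptic regularity for integer orders plus interpolation or pseudodifferential calculus for real $s$). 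The only cosmetic issue is that your index $k$ on the atlas and partition of unity clashes with the differentiability order $k$ in the statement.
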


We can now turn to our second example, following again \cite{Ee} as a main reference, completed by \cite{Om}.

\begin{ex} \label{ex:Sobolev}
	Let $s > m/2.$ The set $$H^s(M,N) = H^s(M,\R^{2n+1}) \cap C^0(M,N)$$
	is a (smooth) Hilbert manifold modeled on $H^s(M,\R^n),$ and this family defines ILH structures on $C^\infty(M,N).$ The atlas on $H^s(M,N)$ is the restriction of the atlas on $C^0(M,N)$ where charts are defined pointwise by the exponential map on $N.$ As a consequence, $H^s(M,N)$ is also the closure of $C^\infty(M,N)$ in $H^s(M,\R^{2n+1}).$
\end{ex}

After these two examples, we have here to insist on the fact that there exist examples where {refined properties} of Fr\'echet/Banach manifold require more attention. One can see for instance the manifold of paths defined in \cite{St2017}. 

We now turn to a first point where diffeologies invite themselves in the ILB setting.For this, we consider a ILB space $(F, (B_n)_{n \in \N}).$ The linear groups $GL(B_n)$ are Banach Lie groups, as an immediate consequence of the implicit functions theorem on the Banach algebra $L(B_n)$ of bounded linear maps on $B_n,$ while  example \ref{hypo} explains that the use of a diffeology for $GL(F)$ seem to be more natural, even if it differs from the subset diffeology inherited from the functional diffeology of $L(F).$ 

This problem has been highlighted in the first works concenring ILB spaces. Indeed, in \cite{Om}, H. Omori considers the spaces
$$ \mathcal{L}_n=\cap_{k = 0}^n L(B_k)$$
and invokes ``natural differentiation" on $$\mathcal{L}_\infty = \cap_{n \in \N} \mathcal{L}_k$$ as well as on $$G\mathcal{L_\infty} = \cap_{n \in \N} GL(B_k).$$
Obviously, $G\mathcal{L_\infty}$ cannot be equipped with any atlas modeled on $\mathcal{L}_\infty$ in the actual state of knowledge, that is, we cannot construct explicitely any such atlas with the actual known techniques. Therefore, the extension of the definition of e.g. connections to an ILB setting require many refinements that can be found in \cite{DGV}. 

In the context of diffeologies, we have another way to consider smoothness of  $G\mathcal{L_\infty}$ through the following result: 

\begin{Theorem} \cite{Ma2013}
	Let $(G_n)_{n \in \N}$ be a sequence Banach Lie groups, decreasing for $\subset,$ for which inclusion maps are smooth. Then $G = \bigcap_{n \in \N} G_n$ is a regular Fr\"olicher Lie group.
\end{Theorem}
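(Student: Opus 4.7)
The plan is to equip $G = \bigcap_{n\in\N} G_n$ with a Fr\"olicher structure arising as a projective limit of the natural Fr\"olicher structures on the Banach Lie groups $G_n$, then derive the Fr\"olicher Lie group property from compatibility of multiplications and inversions across the tower, and finally deduce regularity by combining regularity of each individual $G_n$ with uniqueness of solutions to the right-logarithmic equation.

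First I would endow each $G_n$ with its canonical Fr\"olicher structure $(G_n, \F_n, \C_n)$ coming from its Banach manifold structure (a Banach manifold being in particular a Fr\"olicher space via its n\'ebuleuse diffeology and Proposition \ref{fd}) and then put on $G$ the Fr\"olicher structure generated by $\F_g = \bigcup_n i_n^{*}(\F_n)$, where $i_n : G \hookrightarrow G_n$ denotes the inclusion. Equivalently, a contour $c : \R \to G$ is smooth if and only if $i_n \circ c \in \C_n$ for every $n$. Smoothness of the successive inclusions $G_{n+1} \subset G_n$ guarantees that the two descriptions agree. To check that $G$ is a Fr\"olicher Lie group it suffices to observe that $i_n \circ m = m_n \circ (i_n \times i_n)$ and $i_n \circ (.)^{-1} = (.)^{-1}_n \circ i_n$ are smooth for each $n$, hence $m$ and inversion on $G$ are smooth. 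The internal tangent space ${}^iT_eG$ identifies with $\mathfrak{g} := \bigcap_n \mathfrak{g}_n$ equipped with its projective-limit Fr\"olicher vector space structure, and the Lie brackets on the $\mathfrak{g}_n$ are compatible with the inclusions (which are Lie group morphisms), descending to a smooth bracket on $\mathfrak{g}$.

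The heart of the argument is regularity. Given $v \in C^\infty([0;1], \mathfrak{g})$, smoothness of $\mathfrak{g} \hookrightarrow \mathfrak{g}_n$ lets us regard $v$ as $v_n \in C^\infty([0;1], \mathfrak{g}_n)$ for every $n$. Banach Lie groups are classically regular, so for each $n$ there exists a unique $g_n \in C^\infty([0;1], G_n)$ solving (\ref{loga}) with datum $v_n$. The key uniqueness step is the following: viewing $g_{n+1}$ inside $G_n$ through the smooth group morphism $G_{n+1} \hookrightarrow G_n$, it satisfies the same right-logarithmic equation in $G_n$ with the same initial condition $e$, so uniqueness in $G_n$ forces $g_{n+1} = g_n$ as paths into $G_n$. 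Consequently the $g_n$ coincide with a common path $g : [0;1] \to G$; smoothness of $g$ with respect to the Fr\"olicher structure on $G$ follows from the fact that $i_n \circ g = g_n \in \C_n$ for every $n$. Smoothness of the assignment $\operatorname{Exp}\colon v \mapsto g$ as a map $C^\infty([0;1], \mathfrak{g}) \to C^\infty([0;1], G)$ is then inherited level by level from smoothness of each $\operatorname{Exp}_n$, using that smoothness into the projective-limit target is tested through composition with the $i_n$ and that the functional Fr\"olicher structure behaves well under Cartesian products (Proposition \ref{cvar}).

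The main obstacle I expect is purely technical and concentrates in the last step: checking that the smoothness of $\operatorname{Exp}$ is joint in $v$ (and not merely levelwise in $n$ or pointwise in $t$). This requires invoking Cauchy--Lipschitz with smooth parameters on each Banach Lie group $G_n$, using smoothness of right translations together with the smoothness of the family $v$, and then gluing the resulting parameter-smooth solution maps through the functorial properties of the functional Fr\"olicher structure on path spaces. Once this is in place, the four conditions --- Fr\"olicher structure, group operations, Lie algebra with smooth bracket, and existence of a smooth $\operatorname{Exp}$ --- combine to show that $G$ is a regular Fr\"olicher Lie group in the sense of Definition \ref{reg1}.
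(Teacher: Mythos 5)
Your proposal is correct and follows essentially the same route as the proof in the cited reference \cite{Ma2013} (the present paper only states the theorem and does not reproduce the argument): one equips $G$ with the projective-limit Fr\"olicher structure, uses classical regularity of Banach Lie groups at each level, and invokes uniqueness of solutions of the right-logarithmic equation together with the fact that the inclusions are Lie group morphisms to identify the levelwise solutions and glue them into a smooth $\operatorname{Exp}$ on the intersection. The only point deserving emphasis, which you correctly isolate, is that joint smoothness of $\operatorname{Exp}$ in the datum $v$ is tested through the maps $i_n$ and the Cartesian closedness of the functional structure, so it is indeed inherited level by level.
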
    

This theorem obviously applies to $G\mathcal{L_\infty}.$

}

	\subsection{Mappings with non compact source}

	 {

	 	We first assume that $E$ is a vector bundle with typical fiber $V.$
	 	Let us first consider a fixed local section $s$ of $E,$ with (open) domain $D.$ For any open subset of $M$ such that $ \bar{O} \subset D, $ the evaluation map is smooth. 
	 	Since $M$ is paracompact, one can define a smooth map $\phi: M \rightarrow \R_+$ such that 
	 	$$ \phi(x) = \left\{ \begin{array}{cl} 1 & \hbox{ if } x \in O \\
	 		0 & \hbox{ if } x \notin D \end{array} \right.$$
	 	thus, talking about smoothness with respect to local section, the same is to talk about smoothness with respect to global sections. For the same reasons, one can assume $M$ to be Riemannian. These remarks combined with the existence of local trivialisations on $E,$ we wish to characterize all the topologies on smooth sections of $E, $ that we note by $C^\infty(M,E),$ such that the evaluation maps
	 	$$ \operatorname{ev} : ( x,s) \in M \times C^\infty(M,E) \mapsto \operatorname{ev}_x(s)= s(x) \in E$$
	 	is continuous and moreover, for a given local trivialization $E|_U \sim U \times V$ of $E$ over the open subset $U$ of $M,$ and $\forall \alpha \in \N^*,$ the map
	 	$$ \operatorname{ev}^{(\alpha)}_{U}:  ( x,s) \in M \times C^\infty(M,E) \mapsto \operatorname{ev}^{(\alpha)}_{U,x}(s)= D^\alpha_x s \in L^\alpha( {\R^m},E)$$
	 	is continuous. 
	 	For this purpose, the weaker topology is the pull-back topology on $C^\infty(M,E)$ through the family of maps $\left\{ \operatorname{ev}, \operatorname{ev}_U^{(\alpha)}\right\}.$  This is a standard exercise to show that this topology coincides with the topology defined, for a locally finite family $\{ U_i \, | \, i \in I\}$ of open subset of $M$ over which $E$ is trivial, by the semi-norms
	 	$||.||_{\alpha,K}$ indexed by $\alpha \in \N$ and by the compact subsets $K$ of $M$ defined, for $s \in C^\infty(M,E),$ by 
	 	$$ ||s||_{0,K}  = \sup_{x \in K} ||s(x)||$$
	 	and, for $\alpha >0,$
	 	
	 	$$ ||s||_{\alpha,K}  = \sup\left\{\sup_{x \in K \cap U_i} ||  D^\alpha s(x)|| \, | \, U_i \cap K \neq \emptyset \right\}.$$
	 	In other words, this is the topology of uniform convergence of derivatives at any order of sections $s,$ on any compact subset $K$ of $M.$ With this topology, no global estimate on $S$ is assumed. 
	 	
	 	Let us now assume that $E$ is a fiber bundle with typical fiber $F$ which is a smooth (finite dimensional) manifold for dimension $k.$ Let $\{V_j \, | \, j \in J\}.$ be a locally finite open cover of $F$ such that each open domain $V_j$ is of compact closure in $F.$ Each $V_j$ is identified (by the chosen atlas on $F$) with an open subset $\tilde V_j$ of $\R^k.$ Let $U_i \times F$ be a  local trivialization of $E.$ then on $\operatorname{ev}^{-1}(V_j) \subset U_i \times C^\infty(U_i, F)$ one can consider the distances $d_{i,j,K,\alpha}$ defined by the semi-norms $||.||_{\alpha,K}$ on (local) smooth sections with domain in $U_i$ with values in $V_j \sim \tilde V_j \subset \R^k.$ 
	 	By the way, $C^\infty(M,E)$ \textbf{is a metrizable topological space} for the weaker topology which makes $\left\{ \operatorname{ev}, \operatorname{ev}_U^{(\alpha)}\right\}$ be a family of continuous maps. \textbf{However, if $M$ is not compact, there is actually no atlas for which $C^\infty(M,N)$ is a $C^0-$manifold for this topology} { in {}{contrast} with the case when $M$ is compact described before.}
	 	In order to circumvent this problem, there are two ways: 
	 	
	 	\begin{itemize}
	 		\item consider an exhaustive sequence of compact subsets of $M$, that we note by $(K_n)_{n \in \N}$ and remark that the topology of $C^\infty(M,E)$ is the inductive limit of the Fr\'echet manifold topologies of the increasing family  $$\left\{C^\infty(K_n, E|_{K_n})\, | \, n \in \N \right\}.$$
	 		\item consider the Fr\"olicher structure on $C^\infty(M,E)$ that makes the jet maps $$j^k : f \in C^\infty(M,E) \rightarrow J^k(M,E), \hbox{ for } k \geq 1$$
	 		and 
	 		$$j^\infty : f \in C^\infty(M,E) \rightarrow J^\infty(M,E)$$ smooth { (see \cite{GG1973} for basic definitions on jet spaces, or next section).}
	 	\end{itemize}    
	 	Other stronger topologies exist in $C^\infty(M,N).$ In order to be not so incomplete, we have to mention here the smooth Whitney topology \cite{GG1973}, which is also based on Jet spaces, and for which $C^\infty(M,N)$ is a complete locally convex smooth manifold \cite{KM}. 

	 	\subsection{Jets with non-compact source and a digression on geometric theory of PDEs}
	 	Following \cite[Appendix 2]{MRR-jets}, {the last structures on $C^\infty(M,E)$ extend to $J^k(M,E)$ and to $J^\infty(M,E).$ 
	 		For this, we consider a finite dimensional fiber bundle $E$ with typical fiber $F$ (with $\operatorname{dim}(F)=n$) over a smooth manifold $M$ of dimension $m,$ which can be non compact. In the exposition, we also consider a generic trivialization $\varphi:U \times F \rightarrow E$ of $E$ over an open subset $U$ of $M.$ 
 		The manifold $M$ is the space of 
 	independent variables $x_{i}$, $1 \leq i \leq n$, and the typical fiber is the
 space of dependent variables $u^{\alpha}$, $1 \leq \alpha \leq
m$. The reader may assume that the bundle $\pi$ is simply
${\mathbb R}^n \times {\mathbb R}^{m} \rightarrow {\mathbb R}^n$: even if it looks like we are
``trivializing"  everything because we are working locally, the
fact that we are interested in properties of differential equations and their solutions makes geometry
highly non-trivial, as we show below. 

Let $s_{1}(x_{i})=(x_{i}, s_{1}^{\alpha}(x_{i}))$ and
$s_{2}(x_{i})=(x_{i}, s_{2}^{\alpha}(x_{i}))$ be two local
sections of the bundle $E \rightarrow M$ defined about a point $p =
(x_{i})$ in $M$. We say that $s_{1}$ and $s_{2}$ {\em agree to
order $k$ at $p \in M$} if $s_{1}$, $s_{2}$ and {\em all} the
partial derivatives of the sections $s_{1}$ and $s_{2}$, up to
order $k$, agree at $p$. This notion determines a
coordinate-independent equivalence relation on local sections of
$E$. We let $j^{k}(s)(p)$ represent the equivalence class of the
section $s$ at $p$, and we call this equivalence class {\em the
$k$--jet of $s$ at $p$}. 

\begin{Definition}
The {\em $k$--order jet bundle of $E$} is the space
\[
J^{k}E = \bigcup_{p \in M} J^{k}(p) \; ,
\]
in which $J^{k}(p)$ denotes the set of all the $k$--jets $j^{k}(s)(p)$ of local sections $s$ at $p$.
\end{Definition}

The jet bundle $J^{k}E$ possesses a natural manifold structure; it
fibers over $J^lE$ ($l < k$) and also over $M$: local coordinates
on $J^{k}E$ are $(x_{i}, u^{\alpha}_{0}, u_{i_{1}}^{\alpha},
u_{i_{1}i_{2}}^{\alpha}, \dots ,u_{i_{1}\dots i_{k}}^{\alpha})$,
in which
\begin{equation*}  \label{coo}
u^{\alpha}_{0}(j^{k}(s)(p)) = s^{\alpha}(p) \; , \; \; \; \; \;
u_{i}^{\alpha}(j^{k}(s)(p)) = \frac{\partial s^{\alpha}}{\partial
	x_{i}}(p) \; , \; \; \; \; \; \; \;
u_{i_{1}i_{2}}^{\alpha}(j^{k}(s)(p)) = \frac{\partial^{2}
	s^{\alpha}}{\partial x_{i_{1}}
	\partial x_{i_{2}}}(p) \; ,
\end{equation*}
and so forth, where $j^{k}(s)(p) \in J^kE$ and $(x^i) \mapsto (x^i , s^\alpha(x^i))$ is any local section in the 
equivalence class $j^k(s)(p)$.
In these coordinates, the projection map {
$\alpha^{k} : J^{k}E \rightarrow M$ (source map) factors through a projection map $\pi^k : J^k(E) \rightarrow E$ for which, on a local generic trivialization $\varphi$ as defined before, where $E \sim U \times F$ locally, $\pi^k = \alpha^k \times \beta^k$ where $\beta^k$ is the local target map. Moreover, the projections} 
$\pi^{k}_{l} : J^{k}E \rightarrow J^{l}E$, $l < k$, are defined in obvious ways.
For instance, $\pi_{M}^{k}$ is simply $(x_{i}, u^{\alpha}_{0},
u_{i_{1}}^{\alpha}, \dots , u_{i_{1}\dots i_{k}}^{\alpha}) \mapsto
(x_{i})$. 
{
Let us precise a little more: for $p \in E,$ $(\pi_k)^{-1}(p)$ is a vector space modelled on 
$$\bigoplus_{i = 1}^k L^i(\R^m,\R^n)$$
where $L^i(\R^m,\R^n)$ is the space of $i-$linear symmetric mappings on $\R^m$ with values in $\R^n.$ 
Therefore, $J^k(M,E)$ can be understood as:
\begin{itemize}
	\item a finite dimensional vector bundle over $E$ with typica fiber $\bigoplus_{i = 1}^k L^i(\R^m,\R^n)$
	\item a finite dimensional fiber bundle over $M$ with typical fiber $$F \times \left(\bigoplus_{i = 1}^k L^i(\R^m,\R^n)\right).$$
\end{itemize}An extended exposition of the properties of $k-$jets can be found in \cite{GG1973}.}

\smallskip

Any local section $s : (x_{i}) \mapsto (x_{i},s^{\alpha}(x_{i}))$ of $E$ lifts to a unique local section 
$j^{k}(s)$ of $J^{k}E$ called the {\em $kth$ prolongation} of $s$. In coordinates, $j^{k}(s)$ is the section
\[
j^{k}(s) = \left( x_{i}, s^{\alpha}(x_{i}), \frac{\partial
s^{\alpha}}{\partial x_{i_{1}}} \, (x_{i}) \, , ...,
\frac{\partial^{k} s^{\alpha}}{\partial x_{i_{1}} \dots
\partial x_{i_{k}}} \, (x_{i}) \, , ... \right) \; .
\]
The {\em infinite jet bundle} $\pi^\infty_M : J^{\infty}E \rightarrow M$ is the
inverse limit of the tower of jet bundles  $M \leftarrow E \cdots \leftarrow J^{k}E \leftarrow J^{k+1}E \leftarrow
\cdots$ under the
standard projections $\pi^{k}_{l} : J^{k}E \rightarrow J^{l}E$, $k > l$. 
{
We have here to precise that we consider the inverse limit and not the inductive limit. That is, comparing with $J^k(M,E)$ which is a finite dimensional vector bundle over $E,$ taking the inverse limit, we also get $J^\infty(M,E)$ as a vector bundle over $E$ with typical fiber the \textbf{formal series} $$ \sum_{i = 1}^{+\infty} L^i(\R^m,\R^n).$$	
}We describe the space $J^{\infty}E$ locally, by sequences
\begin{equation}  \label{co}
(x_{i}, u^{\alpha}, u_{i_{1}}^{\alpha}, ..., u_{i_{1}i_{2}\dots
	i_{k}}^{\alpha}, ...) \; , \; \; \; 1 \leq i_1 \leq i_2 \leq \dots
\leq i_k \leq n \; ,
\end{equation}
obtained from the standard coordinates on the finite--order jet
bundles $J^{k}E$.

The limit $J^\infty E$ is a topological
space: a basis for the topology on $J^\infty E$ is the collection of all sets of the form 
$(\pi^{\infty}_{k})^{-1}(W)$, in which $W$ is an
open subset of $J^{k}E$.   We would like to consider $J^\infty E$ as a manifold which needs to be at least an infinite dimensional manifold/ we need to re-define all of the standard differential geometry notions
in this new context, see \cite{A,AK}:

Let $\pi^{\infty}_{k} : J^{\infty}E \rightarrow J^{k}E$ denote the canonical projection from $J^{\infty}E$ onto 
$J^{k}E$. A function $f : J^{\infty}E \rightarrow {\mathbb R}$ is {\em smooth} if it factors through a finite-order 
jet bundle, that is, if $f = f_{k} \circ \pi^{\infty}_{k}$ for some smooth function $f_{k} : J^{k}E \rightarrow 
{\mathbb R}$. Real-valued smooth functions on $J^\infty E$ are also called {\em differential functions}, see 
\cite{O}. {

\textbf{This notion of smoothness coincides with the notion of Fr\"olicher space, where a generating set of functions $\F_0$ serves to define which set of mapping is smooth.} 
Moreover, considering the typical fibers 
$\sum_{i = 1}^{+\infty} L^i(\R^m,\R^n)$ of $J^\infty(M,E)$ as a vector bundle over $E,$ since each $L^i(\R^m,\R^n)$ is finite dimensional, it us easy to prove that  $\sum_{i = 1}^{+\infty} L^i(\R^m,\R^n)$ is a Fr\'echet space, equipped with the semi-norms $$||.||_i =|| . ||_{L^i(\R^m,\R^n)} \circ \pi_i$$ where $\pi_i:\sum_{i = 1}^{+\infty} L^i(\R^m,\R^n) \rightarrow  L^i(\R^m,\R^n)$ is the canonical projection.

When one tries to define the tangent space $TJ^\infty(M,E),$ it is natural to consider all derivations $X$ on $\F_0.$ 
In this very particular setting, by restriction of the derivation $X$ to $C^\infty(J^k(M,E),\R )\subset \F_0,$ since $J^k(M,E)$ is a finite dimensional manifold, then the restriction of $X$ to $J^k(M,E)$ can be expressed as a (classical) vector field  over $J^k(M,E)$ and, by inverse limit procedure, $TJ^\infty(M,E)$ is also the inverse limit of the sequence $$TJ^1(M,E) \leftarrow... \leftarrow TJ^k(M,E) \leftarrow ...$$ which shows that Olver's definition of $TJ^\infty(M,E)$ coincides with the standard definition of the tangent space to the Fr\'echet manifold $J^\infty(M,E).$
}


	 		These structures are useful to talk about partial differential equations understood as \textbf{partial differential relations} along the lines of e.g. \cite{Gro1986}. Indeed, a basic partial differential equation of the form $A(u)=0,$ where $A$ is a linear differential operator of order $k,$ envolves a solution $u$ and its partial derivatives till the order $k,$ and hence can be encoded as $\Xi (j^k(u))=0$ where $$\Xi: J^k(M,\R) \rightarrow \R$$ is a smooth map. The same holds for most non-linear partial differential equations of interest, but it can occur that the space $ \Xi^{-1}(0)$ can be equipped with many ``singularities'' as shows this very non-linear example based on a non-linear differential equation:}
	 	
	 	
	 	{
	 		\begin{ex} \label{piecewiseaffine}
	 			When $\Xi(x,y,y')= x\cos(y)\sin(4y'),$ the subset $\Xi^{-1}(0)$ has an 
	 			infinity of singular points. Let us assume that we restrict $\mathcal{E}$ to a neighbourhood $U$ of a regular point. 
	 			The solutions $x \mapsto (x,y(x))$ we would find are simply constant solutions or affine solutions and we may lose 
	 			information depending on our choice of $U$. Now, solutions extend to piecewise affine (and hence only piecewise 
	 			smooth) solutions on $\Xi^{-1}(0)$. 
	 	\end{ex}}
	 	The problem of the geometry of singularities needs the definition of a generalized way to define differential geometric objects. 
	 	
	 	{  
	 	
	 	{ Let us now consider a partial differential equation $\Xi = 0$ in $J^k(M,E) \subset J^\infty(M,E).$ Following Example \ref{piecewiseaffine}, this is a bit too optimistic to hope that what we call here \emph{the vanishing set} along the lines of \cite[Appendix 2]{MRR-jets}, which is the subset $\Xi^{-1}(0),$ has a structure of smooth manifold. Therefore, in the classical literature, one invokes the use of a \emph{locus}, that is, a smooth submanifold of $J^k(M,E)$ with range in $\Xi^{-1}(0).$ Typically, this locus can be defined by an unique embedding of an open subset of an Euclidian space $O$ to $J^k(M,E).$ This enables to define tangent space, differential forms and other objects on $\{\Xi=0\},$ actually with a slightly non rigorous arguments concerning their global definition on the full space $\Xi^{-1}(0).$ until \cite{MRR-jets} in the context of diffeologies. Indeed, the subset diffeology defines in a natural way a diffeological structure on the vanishing set, which completes the basic setting for the geometry on the main basic example of \textit{diffieties} given by $\Xi^{-1}(0),$ along the lines of \cite{MRR-Vin}. 
	 	
 \begin{rem}
 	The spaces of jets are also useful for alternate definitions of connections and their generalizations, see \cite{KMS}, {and an application of diffeologies to higher connections may be feasible.} We do not have enough place here to develop this aspect of the theory.
 \end{rem}	
 }

	 \subsection{Mappings with low regularity}
	 
	{ In the sequel we consider an algebra $\mathcal{M}$ with complex coefficients, with the hermitian product of matrices $(A,B) \mapsto tr(AB^*).$
	 If there is no possible confusion, we note this matrix norm by $||.||$ or by $||.||_{\mathcal{M}}$ if necessary.
	 We now extend the construction of Example \ref{ex:Sobolev} the following way, along the lines of \cite{Ma2019}.
	  \begin{Definition}
	  	Let $s \in \R,$ let  $M$ be a compact boundaryless Riemannian manifold and let $N$ be a Riemannian manifold embedded in $\mathcal{M}.$
	 	We define $H^s(M,N),$  as the completion of $C^\infty(M,N)$ in $H^s(M,\mathcal{M}).$ 
	 \end{Definition}
 We remark that the condition $s < \operatorname{dim}(M)/2$ is not assumed here. This implies a weaker differential structure on $H^s(M;\mathcal{M})$ when $s \leq dim(M) /2$ which can be viewed as follows, generalizing \cite{Ma2019} in a straightforward way: 
 \begin{Proposition} 
 	$H^s(M,N)$ is a diffeological space, equipped with the subset diffeology inherited from $H^s(M,\mathcal{M}).$
 \end{Proposition}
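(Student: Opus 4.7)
The proof will be essentially a direct application of the subset diffeology construction recalled earlier in the paper. My plan is as follows.

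First, I would observe that $H^s(M,\mathcal{M})$ is a separable Hilbert space: it is the completion of $C^\infty(M,\mathcal{M})$ for the Hilbert norm $\|f\|_{H^s}^2 = \int_M \langle (1+\Delta)^{s/2}f,(1+\Delta)^{s/2}f\rangle_{\mathcal{M}}\, dx$, with the hermitian product on $\mathcal{M}$ extended pointwise. As such, $H^s(M,\mathcal{M})$ carries its canonical nebulae-type diffeology $\p_\infty(H^s(M,\mathcal{M}))$ associated to its underlying Fr\'echet (in fact Hilbert) smooth structure; equivalently, this is the diffeology generated by all Fr\'echet-smooth parametrizations $p : O \subset \R^d \to H^s(M,\mathcal{M})$. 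The fact that this forms a diffeology is standard and recovered from the general discussion of the nebulae diffeology in Section~\ref{sec:DiffFroehl}.

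Second, by construction, $H^s(M,N)$ is defined as the closure of $C^\infty(M,N)$ in $H^s(M,\mathcal{M})$, so in particular $H^s(M,N) \subset H^s(M,\mathcal{M})$ as a set. I would then invoke the subset (trace) diffeology construction recalled after Proposition~\ref{quotient}: for any diffeological space $(X,\p)$ and any subset $X_0 \subset X$, the set
$$\p_0 = \{\, p \in \p \mid \operatorname{Im}(p) \subset X_0\,\}$$
is a diffeology on $X_0$. Applying this with $X = H^s(M,\mathcal{M})$, $\p = \p_\infty(H^s(M,\mathcal{M}))$, and $X_0 = H^s(M,N)$ yields a diffeology on $H^s(M,N)$, which is precisely the subset diffeology in the statement.

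There is essentially no obstacle to this argument: the three axioms of a diffeology (covering by constants, locality, composition with smooth maps between open sets of Euclidean spaces) all transfer from $\p$ to $\p_0$ immediately, since the condition $\operatorname{Im}(p)\subset H^s(M,N)$ is preserved under restriction, gluing, and precomposition with smooth maps. The only conceptual point worth emphasizing in the write-up is that, when $s \leq \dim(M)/2$, one is no longer guaranteed a manifold atlas on $H^s(M,N)$ (contrary to the situation of Example~\ref{ex:Sobolev} for $s>m/2$), and the diffeological viewpoint is precisely what allows us to speak of a meaningful smooth structure in that low-regularity regime.
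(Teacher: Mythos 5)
Your proposal is correct and matches the intended argument: the paper gives no explicit proof, stating only that the result generalizes \cite{Ma2019} in a straightforward way, and the content is exactly what you describe, namely that $H^s(M,N)$ is by definition a subset (the closure of $C^\infty(M,N)$) of the Hilbert space $H^s(M,\mathcal{M})$, so the subset diffeology inherited from the nebulae diffeology of $H^s(M,\mathcal{M})$ automatically satisfies the three diffeology axioms. Your closing remark on the $s\leq\dim(M)/2$ regime is also the point the paper itself emphasizes just before the statement.
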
 }
	 Let us now consider a compact connected Lie group $G$ of matrices.
	 
	 For $s>\operatorname{dim}(M)/2,$  it is well-known, that $H^s(M,G)$ is a Hilbert Lie group. 
	The case where $M=S^1,$ that is the loop space or the loop group, is the most widely studied in the literature, see e.g. \cite{PS}. The biggest Sobolev order where $H^s(S^1,G)$ fails to be a Hilbert manifold, and also a group, is $s=1/2.$
	 
	 \begin{Proposition} 
	 	Let $s \leq 1/2.$. Then $H^{s}(S^1,G)$ and $H^{s}_0(S^1,G)$ are Fr\"olicher space.
	 \end{Proposition}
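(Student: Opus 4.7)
The plan is to realize both $H^s(S^1,G)$ and $H^s_0(S^1,G)$ as subsets of the ambient Hilbert space $H^s(S^1,\mathcal{M})$ and invoke the subset construction for Fr\"olicher spaces, stated in item (2) of the subset discussion in Section \ref{sec:DiffFroehl}. This is the only viable route for $s\leq 1/2$, since at such regularity no Hilbert manifold structure is available on $H^s(S^1,G)$.

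First I would observe that $H^s(S^1,\mathcal{M})$ is a separable Hilbert space, hence a complete locally convex topological vector space. As noted in the remark following Definition \ref{reg2}, any such space carries a canonical Fr\"olicher structure $(\F,\C)$, obtained as the Fr\"olicher completion of its nebulae diffeology; one can realize it concretely by taking as generating family of functions $\F_g$ either the continuous linear functionals or the Fr\'echet-smooth real-valued functions. The resulting structure is reflexive, and its underlying nebulae diffeology coincides with the natural diffeology of the Hilbert space $H^s(S^1,\mathcal{M})$.

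Next, since by definition $H^s(S^1,G)$ is the closure of $C^\infty(S^1,G)$ in $H^s(S^1,\mathcal{M})$, it is a subset of this ambient Fr\"olicher space. Applying the subset-Fr\"olicher construction, I define a Fr\"olicher structure $(\F',\C')$ on $H^s(S^1,G)$ by taking as generating family the restrictions to $H^s(S^1,G)$ of the functions of $\F$. The contours $\C'$ are then exactly the smooth paths $c\colon\R\to H^s(S^1,\mathcal{M})$ whose image lies in $H^s(S^1,G)$, and $\F'$ is generated by composition. The identical reasoning applies to $H^s_0(S^1,G)$, viewed as the subset of based loops (e.g.\ those vanishing at a chosen marked point of $S^1$), since it too is a subset of $H^s(S^1,\mathcal{M})$.

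The only point requiring a little care --- and what I expect to be the main (mild) obstacle --- is the verification that the Fr\"olicher structure just built is compatible with the subset diffeology used in the preceding proposition. By Proposition \ref{fd} together with the reflexivity of the ambient Fr\"olicher structure on the Hilbert space, the nebulae diffeology $\p_\infty(\F')$ is exactly the set of plots $p$ of the ambient nebulae diffeology whose image lies in $H^s(S^1,G)$; this is precisely the subset diffeology of the previous proposition. No additional analytic information about the embedding $G\subset\mathcal{M}$ is required: the argument uses only the set-theoretic inclusion $H^s(S^1,G)\subset H^s(S^1,\mathcal{M})$, which is exactly why the construction survives the loss of manifold structure at $s\leq 1/2$.
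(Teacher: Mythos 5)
Your proposal is correct and follows essentially the route the paper intends: the paper gives no explicit proof but relies, exactly as you do, on viewing $H^{s}(S^1,G)$ (and $H^{s}_0(S^1,G)$) as subsets of the Hilbert space $H^{s}(S^1,\mathcal{M})$ with its canonical reflexive Fr\"olicher structure and equipping them with the induced subset Fr\"olicher structure, whose contours are the ambient contours with image in the subset. The only nitpick is terminological: based loops in $H^{s}_0(S^1,G)$ should take the value $e\in G$ (not ``vanish'') at the marked point, which does not affect the argument since only the set-theoretic inclusion is used.
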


This proposition completes the classical setting, in which the study of the $H^{1/2}-$metric is a key point, from a geometric viewpoint, see e.g. \cite{F2,PS}, as from the viewpoint of shape analysis, see e.g. \cite{We}. 
	 \subsection{Groups of diffeomorphisms are regular... or not}
	 
	 Let $M$ be a locally compact, non-compact manifold,
	 {
	 	which is assumed to be Riemannian without restriction,} equipped with its n\'ebuleuse diffeology. We equip the group of diffeomorphisms $\operatorname{Diff}(M)$ with the topology of convergence of the derivatives an any order, uniformly on each compact subset of $M,$ usually called $C^\infty-$compact-open topology or weak topology in \cite{Hir}. Traditionnally, $\operatorname{Vect}(M)$ is given as the Lie algebra of $\operatorname{Diff}(M),$ but \cite[section 43.1]{KM} shows that this strongly depends on the topology of $\operatorname{Diff}(M).$ 
	 {
	 	Indeed, the Lie algebra of vector fields described in \cite[section 43.1]{KM} is the Lie algebra of compactly supported vector fields, which is not the (full) Lie algebra $\operatorname{Vect}(M).$
	 	In another context, when $M$ is compact, $\operatorname{Vect}(M)$ is the Lie algebra of $\operatorname{Diff}(M),$ which can be obtained by Omori's regularity theorems \cite{Om1973,Om} and recovered in \cite{CW}.
	 } 
	 What is well known is that infinitesimal actions 
	  of $\operatorname{Diff}(M)$ on $C^\infty(M,\R)$ generate vector fields, viewed as order 1 differential operators.
	 {
	 	The bracket on vector fields is given by $$(X,Y)\in \operatorname{Vect}(M)	\mapsto [X,Y]= \nabla_XY - \nabla_YX,$$
	 	where $\nabla$ is the Levi-Civita connection on $TM.$ This is a Lie bracket, stable under the Adjoint action of $\operatorname{Diff}(M).$
	 } Moreover, the compact-open topology on $\operatorname{Diff}(M)$ generates a corresponding $C^\infty-$compact-open topology on $\operatorname{Vect}(M).$ This topology is itself the $D-$topology for the the {
	 	functional} diffeology on $\operatorname{Diff}(M).$  Following \cite[Definition 1.13 and Theorem 1.14]{Les}, $\operatorname{Vect}(M)$ equipped with the $C^\infty$ compact-open topology is a Fr\'echet vector space, {
	 	and  the Lie bracket is smooth.
	 	Moreover, we feel the need to remark that}  the evaluation maps
	 $$ T^*M \times Vect(M) \rightarrow \R$$
	 separate $\operatorname{Vect}(M).$ Thus $\operatorname{Diff}(M)$ is a diffeological Lie group matching with the criteria of \cite[Definition 1.13 and Theorem 1.14]{Les}.
	
	 Let $F$ be the vector space of smooth maps $f \in C^\infty(]0;1[;\R).$ We equip $F$ with the following semi-norms:
	 
	 For each $(n,k) \in \N^* \times \N $,  $$||f||_{n,k} = \sup_{\frac{1}{n+1}\leq x \leq \frac{n}{n+1}} |D^k_xf|.$$
	 
	 This is a Fr\'echet space, and its topology is the smooth compact-open topology, which is the $D-$topology of the compact-open diffeology. 
	 Let 
	 $$\A = \{ f \in C^\infty(]0;1[;]0;1[)|\lim_{x \rightarrow 1}f(x)=1
	 \wedge\lim_{x \rightarrow 0}f(x)=0 \}.$$
	 Finally, we set
	 $$\D = \{ f \in \A | \inf_{x \in ]0;1[}f'(x) >0 \}.$$
	 $\D$ is a {
	 	contractible} set of diffeomorphisms of the open interval $]0;1[$ which is an (algebraic) group for composition of functions. Composition of maps, and inversion, is smooth for the {
	 	functional} diffeology.
	 Unfortunately, $\D$ is not open in $\A.$
	 As a consequence, we are unable to prove that it is a Fr\'echet Lie group. However, considering the smooth diffeology induced on $\D$ by $\A,$ the inversion is smooth. As a consequence, $\D$ is (only) a diffeological Lie group.
	 
	%
	 %


{
}

Following \cite{Ma2018-2}, we can state:
\begin{Theorem}
$\D$ is a non-regular Fr\"olicher Lie group. 
\end{Theorem}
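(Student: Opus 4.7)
The plan is to exhibit a smooth curve $s \mapsto v(s)$ in $\mathfrak{d}$ such that the unique candidate solution of the right-logarithmic ODE \eqref{loga} escapes $\D$ at some time $s^{\ast} \in [0,1]$, falsifying regularity in the sense of Definition~\ref{reg1}. Since the Cauchy problem $\partial_s g_s(x) = v(s, g_s(x))$ with $g_0 = \operatorname{Id}$ has at most one solution among $C^1$-diffeomorphisms of $]0;1[$ by classical ODE uniqueness, the escape of this single candidate is enough to kill the existence of $\operatorname{Exp}$.

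First I would describe $\mathfrak{d} = {}^{i}T_{\operatorname{Id}}\D$. For a smooth path $c$ in $\D$ with $c_0 = \operatorname{Id}$, Taylor expansion gives $c_t'(x) = 1 + t v'(x) + t^2 R(t,x)$ with $R$ jointly smooth in $(t,x)$; the constraint $\inf_x c_t'(x) > 0$ for all small $t > 0$, together with the analogous condition at $x \to 1^{-}$, then forces $v = \dot c_0$ to satisfy $v' \in L^\infty(]0;1[)$ and $v(0^{+}) = v(1^{-}) = 0$, because no smooth correction $R$ can cancel a negative linear term $tv'(x)$ that diverges as $x$ approaches the boundary. Conversely, any such $v$ belongs to $\mathfrak{d}$ via its own flow. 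Crucially, this characterization is pointwise in any parameter: a smooth curve $s \mapsto v(s) \in \mathfrak{d}$ only requires $\|v(s)'\|_{L^\infty} < \infty$ for each $s$, with no uniform bound over $s \in [0,1]$.

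Next I would build the counter-example. Choose accumulation points $x_n \to 0^{+}$, times $s_n \nearrow 1$, widths $\varepsilon_n \to 0^{+}$ and heights $M_n \to +\infty$, and smooth $s$-bumps $\chi_n$ with pairwise disjoint supports concentrated near $s_n$; for a standard nonnegative smooth bump $\phi \colon \R \to \R$, set
\[
v(s,x) \,=\, -\sum_n \chi_n(s)\, M_n\, \phi\!\left(\frac{x - x_n}{\varepsilon_n}\right).
\]
Each slice $v(s,\cdot)$ has bounded derivative (of order $M_n/\varepsilon_n$ while $s$ lies in the $n$-th window), so $v(s) \in \mathfrak{d}$ for every $s$; joint smoothness of $v$ on $[0,1] \times ]0;1[$ follows from local finiteness of the sum and from the fact that each bump has compact support inside $]0;1[$ that shrinks towards $x=0$ without ever touching any fixed $x^{\ast} > 0$. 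Tuning $(x_n, s_n, M_n, \varepsilon_n)$ so that the candidate orbit $s \mapsto g_s(x_{\ast})$ through a chosen $x_{\ast}$ crosses each bump at time $s_n$, one forces $\int_0^1 v(s)'(g_s(x_{\ast}))\, ds = -\infty$; then $g_1'(x_{\ast}) = \exp\!\bigl(\int_0^1 v(s)'(g_s(x_{\ast}))\, ds\bigr) = 0$, so $g_1 \notin \D$.

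The principal obstacle will be the simultaneous geometric tuning of $(x_n, s_n, M_n, \varepsilon_n)$: each slice must remain inside $\mathfrak{d}$, the map $(s,x) \mapsto v(s,x)$ must extend smoothly to the closed endpoint $s=1$ (so the piled-up bumps do not damage smoothness there), and the tracked orbit must pass through the bumps with residence times long enough for the logarithmic integral to diverge. Once these three constraints are jointly satisfied, uniqueness of the ODE solution rules out any alternative candidate inside $\D$, yielding the non-regularity of $\D$.
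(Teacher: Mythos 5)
The paper itself does not prove this statement: the theorem is quoted from \cite{Ma2018-2}, where the counterexample is a single explicit Lie-algebra element whose flow leaves $\D$. Your overall strategy --- exhibit a smooth path $s\mapsto v(s)$ in ${}^iT_{\operatorname{Id}}\D$ whose unique pointwise flow cannot stay in $\D$, then use ODE uniqueness to rule out any solution of (\ref{loga}) --- has the right shape, but two of your intermediate steps contain genuine errors, and repairing the first of them collapses the whole construction to a one-line example.

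First, your characterization of $\mathfrak{d}={}^iT_{\operatorname{Id}}\D$ is false in the necessity direction: membership does \emph{not} force $v'\in L^\infty$. Smoothness of a path $c\colon\R\to\D$ for the structure induced by $\A$ only means that $(t,x)\mapsto c(t)(x)$ is jointly smooth on $\R\times ]0;1[$; there is no uniformity in $x$ near the endpoints, so the remainder $R(t,x)$ in your Taylor expansion is free to blow up as $x\to 0$ for each fixed $t\neq 0$ and to absorb the divergent linear term --- exactly the loophole your argument dismisses. Concretely, $c(t)(x)=x\exp\bigl(\tanh(t\ln x)\bigr)$ satisfies $\partial_x c(t)(x)=e^{\tanh(t\ln x)}\bigl(1+t\operatorname{sech}^2(t\ln x)\bigr)\geq e^{-1}(1-|t|)$, hence lies in $\D$ for $|t|<1$, and its velocity at $t=0$ is $x\ln x$, whose derivative $\ln x+1$ is unbounded. (Only the sufficiency half of your characterization is load-bearing for your construction, and that half is correct; but the false necessity claim is what hides the short proof.) Second, the dynamical step fails as written: an orbit of $\dot y=v(s,y)$ cannot ``cross'' a compactly supported spatial bump, since $v$ vanishes to infinite order at the edge of its support and the orbit reaches that edge only asymptotically; with supports shrinking towards $0$, the tracked orbit is trapped in the first bump it meets and the later bumps never act on it. The divergence $\int_0^1\partial_x v(s,g_s(x_*))\,ds=-\infty$ can still be engineered, but only through the telescoping identity $\int\partial_x v\,ds=\ln|v(y(s_2))|-\ln|v(y(s_1))|$ on each time window, combined with consecutive spatial supports overlapping precisely at the stalling points --- which is exactly the ``principal obstacle'' you name and do not resolve.

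Once the first point is understood, the theorem is immediate, and this is in substance the argument of \cite{Ma2018-2}: take the constant path at $v(x)=x\ln x\in\mathfrak{d}$. Any $g\in C^\infty([0;1],\D)$ solving (\ref{loga}) would satisfy the pointwise ODE $\partial_t g_t(x)=g_t(x)\ln g_t(x)$ with $g_0=\operatorname{Id}$, whose unique solution is $g_t(x)=x^{e^t}$; but $\inf_{x\in ]0;1[}e^t x^{e^t-1}=0$ for every $t>0$, so $g_t\in\A\setminus\D$ and no solution in $\D$ exists, whence $\operatorname{Exp}$ cannot be defined.
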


We now give a consequence of this first theorem for the non-regularity of the group of diffeomorphisms of a non-compact boundaryless manifold, following \cite{Ma2018-2}.
}

\begin{Theorem} 
 $Diff(M)$ is a non-regular Fr\"olicher Lie group.
\end{Theorem}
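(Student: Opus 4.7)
The strategy is to transfer the non-regularity of $\D$ established in the previous theorem to $\operatorname{Diff}(M)$ by exhibiting a smooth Fr\"olicher group morphism $\iota\colon \D \to \operatorname{Diff}(M)$ whose image is a Fr\"olicher Lie subgroup, and then showing that regularity of $\operatorname{Diff}(M)$ would descend along $\iota$ to give regularity of $\D$.

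The non-compactness of $M$ is used in an essential way. Restricting without loss of generality to a non-compact connected component, one chooses a smooth chart $\varphi\colon\,]0;1[\,\times V \hookrightarrow M$, where $V$ is an open ball in $\R^{\dim M -1}$, tuned so that the two ``ends'' of the $]0;1[$-factor escape to ends-at-infinity of $M$, i.e.\ the image of $\varphi$ is closed in $M$ with frontier only at infinity. For $f \in \D$, define $\iota(f)$ to act on the image of $\varphi$ by $(s,y) \mapsto (f(s), y)$ and by the identity on the complement. The boundary conditions $\lim_{s \to 0^{+}} f(s) = 0$ and $\lim_{s \to 1^{-}} f(s) = 1$ together with the placement of the ends at infinity ensure that $\iota(f)$ is a smooth diffeomorphism of $M$; $\iota$ is clearly a group morphism, and its smoothness for functional diffeologies reduces to smoothness of composition with $\varphi$. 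The induced morphism of Lie algebras sends $w \in \operatorname{Vect}(\,]0;1[\,)$ to the vector field $\tilde w$ on $M$ equal to $(w(s), 0)$ in the chart $\varphi$ and zero outside.

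Now assume for contradiction that $\operatorname{Diff}(M)$ is regular. Given a smooth path $w\colon [0;1] \to \operatorname{Vect}(\,]0;1[\,)$ representing a path in the Lie algebra of $\D$, its extension $\tilde w$ is a smooth path in $\operatorname{Vect}(M)$, and regularity provides a unique smooth integrating path $g\colon [0;1] \to \operatorname{Diff}(M)$ with $g(0) = \operatorname{id}$ and $\partial_t g(t) \cdot g(t)^{-1} = \tilde w(t)$. Because $\tilde w$ vanishes outside the image of $\varphi$ and is tangent to the slices $\{y = \text{const}\}$, uniqueness forces $g(t)$ to preserve each such slice and to restrict to the identity outside the chart; thus $g(t) \in \iota(\D)$ for all $t$, and smoothness of $t \mapsto \iota^{-1}(g(t))$ yields an integrating path for $w$ in $\D$, contradicting the non-regularity of $\D$.

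The main obstacle is the construction of $\varphi$ in the second paragraph: one must arrange that the tubular neighborhood of the embedded arc has all its frontier at infinity in $M$, or equivalently that the $]0;1[$-factor is properly embedded with a trivial normal bundle that shrinks appropriately near infinity. When $\dim M \geq 2$ this is feasible by a careful choice of proper embedding of $\R \simeq\,]0;1[\,$ together with a suitably tapering transverse ball. An alternative route would introduce a bump function in the $V$-direction to smoothly interpolate $\iota(f)$ to the identity, at the cost of destroying the strict group morphism property of $\iota$; one would then restrict to a Fr\"olicher Lie subgroup of $\D$ for which an analogue of the previous non-regularity theorem still holds. Either route should succeed, and the proof in \cite{Ma2018-2} likely proceeds along one of these lines.
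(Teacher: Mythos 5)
Your overall strategy --- transferring the non-regularity of $\D$ to $\operatorname{Diff}(M)$ through an embedding of $]0;1[$ that escapes to infinity --- is the one the paper intends: the text gives no proof of its own and defers entirely to \cite{Ma2018-2}, whose argument is of this type. But the construction you lean on most heavily cannot exist. An open subset $\varphi(]0;1[\times V)$ of a connected manifold $M$ that is also closed in $M$ (``frontier only at infinity'') is clopen, hence equal to $M$; so your first route only makes sense when $M$ itself is diffeomorphic to $]0;1[\times V\cong\R^{\dim M}$, not for a general non-compact $M$. Without that, the map $\iota(f)$ defined as $(s,y)\mapsto(f(s),y)$ inside the chart and the identity outside is not even continuous at frontier points approached through the $V$-direction (there $(f(s),y)$ does not limit to the identity), and is not $C^\infty$ at frontier points approached through $s\to 0,1$ (the derivatives of $f$ need not tend to those of the identity). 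So the Fr\"olicher group morphism $\iota\colon\D\to\operatorname{Diff}(M)$ on which your contradiction rests is not actually constructed, and your second route, as you note yourself, destroys the morphism property that the contradiction uses.

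The repair is to abandon the requirement that $\iota$ be a group morphism, because non-regularity does not need one: it suffices to exhibit a single smooth path $t\mapsto\tilde w(t)$ in $\operatorname{Vect}(M)$ for which the equation $\partial_tg(t)\circ g(t)^{-1}=\tilde w(t)$, $g(0)=\operatorname{id}$, has no solution in $C^\infty([0;1],\operatorname{Diff}(M))$. Choose a \emph{proper} embedding $j$ of $]0;1[\,\cong\R$ into $M$ (this is exactly where non-compactness enters, and properness should be demanded of the one-dimensional factor alone, not of the whole tube), a tubular neighbourhood with coordinates $(s,y)$, and a transverse bump function $\chi$, and let $\tilde w(t)$ equal $\chi(y)\,w(t)(s)\,\partial_s$ in the tube and $0$ outside, where $w$ is the path of vector fields on $]0;1[$ witnessing the non-regularity of $\D$. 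Then $\tilde w$ is a smooth path in $\operatorname{Vect}(M)$, and any solution $g(t)$ is the flow of the time-dependent field $\tilde w$, hence fixes the complement of the tube and preserves the central slice $j(]0;1[)$, on which it restricts to a globally defined flow of $w$. Since in \cite{Ma2018-2} the witness $w$ is a field whose integral curves reach the ends of $]0;1[$ --- equivalently, by properness of $j$, leave every compact subset of $M$ --- in finite time, this is already a contradiction, with no need to verify that the restricted diffeomorphisms lie in $\D$ or that any subgroup of $\operatorname{Diff}(M)$ is isomorphic to $\D$.
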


{ \begin{rem}
		In this statement and as we stated before, $\operatorname{Diff}(M)$ is equipped with a topology which is called weak topology in some parts of the literature. It appears to be one of the weakest topologies, inherited from a Fr\"olicher structure, that make evaluation maps continuous and smooth. 
		This is in great contrast with the large class of examples of manifold structures on $\operatorname{Diff}(M)$ for which $\operatorname{Diff}(M)$ is regular \cite{KM,KMR}. Therefore, as in the context of loop spaces described before, the geometry and the topology depend a lot on the chosen geometric structures.    
	\end{rem}}
 
	\subsection{Geometry of finite and infinite triangulations}
The notion of triangulation is itself a bit confuse in the litterature. One can define: 
\begin{itemize}
	\item either parametrized smooth triangulations in a manifold $M$: 
	\begin{Definition} \label{smtriang} A \textbf{smooth triangulation} of $M$ is a family $\tau = (\tau_i)_{i \in I}$
		where $I \subset \N$ is a set of indexes, finite or infinite, each $\tau_i$ is a smooth map $\Delta_n \rightarrow M,$ and such that:
		\begin{enumerate}
			\item $\forall i \in I, \tau_i$ is a (smooth) embedding, i.e. a smooth injective map such that {{}$(\tau_i)_*\left(\p({\Delta_n})\right)$} is also the subset diffeology of $\tau_i(\Delta_n)$ as a subset of $M.$
			\item $\bigcup_{i \in I }\tau_i(\Delta_n) = M.$ (covering)
			\item $\forall (i,j) \in I^2,$ $\tau_i(\Delta_n) \cap \tau_j(\Delta_n) \subset  \tau_i(\partial \Delta_n) \cap \tau_j(\partial \Delta_n).$ (intersection along the borders)
			\item  $\forall (i,j) \in I^2$ such that $ D_{i,j}=\tau_i(\Delta_n) \cap \tau_j(\Delta_n) \neq \emptyset,$ for each $(n-1)$-face $F$ of $D_{i,j},$ the ``transition maps" $``\tau_j^{-1} \circ \tau_i'' : \tau_i^{-1}(F) \rightarrow \tau_j^{-1}(F)$ are affine maps.  
		\end{enumerate} 
	\end{Definition} 
\item or non-parametrized smooth triangulations, which consider only the images in $M$ of the tarndard simplex through a smooth simplex of the triangulation. 
\end{itemize}
In open domains $\Omega \subset \R^n, $ one often restricts to {\bf affine} triangulations, for which, for each $k \in \N_n,$  $(n-k)-$ dimensional faces lie in $(n-k)-$dimensional affine subspaces of $\R^n$ and, in the parametrized case, embeddings are (restrictions of) affine maps. 
	\subsubsection{Diffeologies associated to a fixed triangulation}
	There exists actually many diffeologies on a fixed triangulated manifold, and more generally, on a CW-complex and the first described were in { \cite{Nt2002,Nt2005} many years before \cite{CW2014}, see e.g. \cite{Kih2019,Kat2020,Kat2021} while other approaches are still in progress.} Each of them has its own technical interest. We give here a selected one adapted to our needs. we begin with a lemma from \cite{Ma2016-2} which is adapted from so-called gluing results present in \cite{Nt2002,pervova2017,pervova2018} to the context which is of interest for us.
	\begin{Lemma}  \label{cov}
		Let us assume that $X$ is a topological space, and that there is a collection $\{(X_i,\F_i,\mcc_i)\}_{i \in I}$ of Fr\"olicher spaces, 
		together with continuous maps $\phi_i: X_i \rightarrow X.$
		Then we can define a Fr\"olicher structures on $X$
		setting $$\F_{I,0} = \{f \in C^0(X,\R) | \forall i \in I, \quad f \circ \phi_i \circ \mcc_i \subset C^\infty(\R,\R)\},$$ wa define $\mcc_I$ the contours generated by the family $\F_{I,0},$ and  $\F_I = \F(\mcc_I).$
	\end{Lemma}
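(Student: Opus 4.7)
The plan is to recognize this lemma as essentially a direct application of the standard Fröhlicher generation procedure already recalled in Section 2.1 (the one producing $(X,\F,\C)$ from any generating set $\F_g$ of real-valued functions). First I would verify that $\F_{I,0}$ is well-defined and non-degenerate: for any $f\in C^0(X,\R)$, any $i\in I$ and any $c_i\in \mcc_i$, the composition $f\circ\phi_i\circ c_i:\R\to\R$ makes sense and lies in $C^0(\R,\R)$ by continuity of $\phi_i$ and of $f$; the smoothness requirement is then an intrinsic property cutting out the subset $\F_{I,0}\subset C^0(X,\R)$. Since constants obviously satisfy the condition, $\F_{I,0}$ is non-empty.

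Next I would invoke the duality recalled earlier in the text: starting from the generating family $\F_{I,0}$, the set $\mcc_I=\{c:\R\to X \mid \F_{I,0}\circ c\subset C^\infty(\R,\R)\}$ and then $\F_I=\{f:X\to\R \mid f\circ\mcc_I\subset C^\infty(\R,\R)\}$ automatically form a Fröhlicher structure; this is the ``double dualization'' which is standard and gives $\F_{I,0}\subset\F_I$ as well as the duality identity $\F_I=\F(\mcc_I)$ and the corresponding one for $\mcc_I$. Hence $(X,\F_I,\mcc_I)$ is a Fröhlicher space, as announced.

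As a complement one verifies that each $\phi_i:(X_i,\F_i,\mcc_i)\to(X,\F_I,\mcc_I)$ is smooth, which is what actually makes the construction useful in the applications of the lemma. Indeed, for any $c_i\in\mcc_i$ and any $g\in\F_{I,0}$, we have $g\circ\phi_i\circ c_i\in C^\infty(\R,\R)$ by the very definition of $\F_{I,0}$; hence $\phi_i\circ c_i\in\mcc_I$, and therefore for any $f\in\F_I=\F(\mcc_I)$ the composite $f\circ\phi_i\circ c_i$ is smooth. This shows $\F_I\circ\phi_i\circ\mcc_i\subset C^\infty(\R,\R)$, i.e.\ $\phi_i$ is smooth.

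There is no real obstacle here: the only delicate point is conceptual, namely to observe that the requirement $f\in C^0(X,\R)$ in the definition of $\F_{I,0}$ is not needed to run the generation procedure but is natural in view of the ambient topology on $X$, and that continuity of the $\phi_i$ guarantees consistency between the topological and the Fröhlicher data. One may also remark that $\F_{I,0}$ is in general strictly smaller than $\F_I$, since the latter is, by the double dualization, the largest family of real-valued functions compatible with $\mcc_I$.
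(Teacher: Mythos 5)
Your proposal is correct and is essentially the argument the paper intends: the lemma is a direct instance of the generating procedure recalled in Section~1.1 (a family $\F_g$ of real-valued maps generates a Fr\"olicher structure by double dualization), applied to $\F_g=\F_{I,0}$, and the paper itself offers no further proof beyond citing this construction. Your additional verification that each $\phi_i$ is smooth, and your remark that the $C^0$ hypothesis is not needed for the generation itself, are accurate and harmless complements.
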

	
	Let $M$ be a smooth manifold for dimension $n.$ Let \begin{equation} \label{embtriangulation}\Delta_n= \{(x_0,...,x_n)\in \R_+^{n+1} | x_0 +... +x_n = 1\}\end{equation} be the standard  $n-$ simplex, equipped with its subset diffeology. It is easy to show that this diffeology is reflexive through Boman's theorem already mentionned, and hence we can call it Fr\"olicher space $(\Delta_n, \F_{\Delta_n}, \C_{\Delta_n}),$ and we {{} denote} its associated reflexive diffeology by $\p(\Delta_n).$ 
	 	Under these  conditions, we equip the triangulated manifold $(M,\tau)$ with a Fr\"olicher structure $(\F_I,\mcc_I),$ generated by the smooth maps $\tau_i$ applying Lemma \ref{cov}. The following result is obtained from the construction of $\F$ and $\mcc:$
	 \begin{Theorem}
	 	The inclusion $ (M,\F,\mcc) \rightarrow M$ is smooth.
	 \end{Theorem}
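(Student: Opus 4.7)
The plan is to unwind definitions. Writing $\iota : (M,\F,\mcc) \to M$ for the inclusion, where the target $M$ is equipped with its standard manifold Fr\"olicher structure $(C^\infty(M,\R), C^\infty(\R,M))$, the criterion to verify is the standard one for smoothness of maps between Fr\"olicher spaces: for every $g \in C^\infty(M,\R)$ (in the classical sense) and every contour $c \in \mcc$, the composition $g \circ c : \R \to \R$ must be smooth.

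The first step is to show the inclusion of sets $C^\infty(M,\R) \subset \F_{I,0}$. Given a classically smooth $g : M \to \R$, pick any $i \in I$. By Definition \ref{smtriang}, the map $\tau_i : \Delta_n \to M$ is smooth as a map of manifolds (or rather, its restriction to the interior and affine behaviour along faces ensures $\tau_i$ is smooth as a map of Fr\"olicher spaces $(\Delta_n,\F_{\Delta_n},\C_{\Delta_n}) \to M$). Therefore $g \circ \tau_i \in \F_{\Delta_n}$, and consequently $g \circ \tau_i \circ \mcc_i \subset C^\infty(\R,\R)$. Since $g$ is also continuous on $M$, this places $g$ in $\F_{I,0}$.

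The second step is tautological once the first is in place. By the construction in Lemma \ref{cov}, the contour set $\mcc = \mcc_I$ is precisely the set of paths generated by $\F_{I,0}$, that is,
\[
\mcc_I \subset \{ c : \R \to M \mid \F_{I,0} \circ c \subset C^\infty(\R,\R)\}.
\]
Hence for any $c \in \mcc$ and any $g \in C^\infty(M,\R) \subset \F_{I,0}$, the composition $g \circ c$ lies in $C^\infty(\R,\R)$. This is exactly the smoothness criterion for $\iota$, so the proof is complete.

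There is no genuine obstacle here: the argument is a pure application of the universal property of the Fr\"olicher structure generated by a family of functions. The only point that deserves a moment of care is checking that the smoothness of the $\tau_i$ in the sense of Definition \ref{smtriang} really does imply that $g \circ \tau_i$ is classically smooth on $\Delta_n$ (including along the boundary faces), which follows from the affine nature of the transition maps across $(n-1)$-faces together with the reflexivity of the subset diffeology on $\Delta_n$ via Boman's theorem.
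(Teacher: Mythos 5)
Your proof is correct and is exactly the unwinding that the paper has in mind (the paper gives no details, merely asserting that the result ``is obtained from the construction of $\F$ and $\mcc$''): one checks $C^\infty(M,\R)\subset\F_{I,0}$ using the smoothness of each $\tau_i$, and then the conclusion is immediate from the fact that $\mcc_I$ is the contour set generated by $\F_{I,0}$. Your closing worry is unnecessary --- $g\circ\tau_i$ is smooth simply because Definition \ref{smtriang} already declares $\tau_i$ a smooth map of diffeological spaces, so the affine transition maps play no role here.
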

	\subsubsection{Geometry of the space of triangulations}
	Let us now fully develop an approach based on the remarks given in  \cite{Ma2016-2}. For this, the space of triangulations of $\Omega$ is considered itself as a Fr\"olicher space, and the mesh of triangulations which makes the finite element method converge will take place, as the function $f,$  among the set of parameters $Q.$ We describe here step by step the Fr\"olicher structure on the space of triangulations.  
	By the way,

	Under these  conditions, we equip the triangulated manifold $(M,\tau)$ with a Fr\"olicher structure $(\F_I,\mcc_I),$ generated by the smooth maps $\tau_i$ applying Lemma \ref{cov}. The following result from \cite{Ma2020-3} is obtained from the construction of $\F$ and $\mcc:$
	\begin{Theorem} 
		The inclusion $ (M,\F,\mcc) \rightarrow M$ is smooth.
	\end{Theorem}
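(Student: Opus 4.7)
The plan is to unfold the definition of smoothness between Fr\"olicher spaces and show that every smooth function on the manifold $M$ lies in $\F_I$, which will immediately yield smoothness of the inclusion. Concretely, recall that a map $f: (X, \F, \mcc) \to (X', \F', \mcc')$ is smooth precisely when $\F' \circ f \circ \mcc \subset C^\infty(\R, \R)$. Applied to the identity map $\iota : (M, \F_I, \mcc_I) \to M$, viewed as the inclusion into $M$ equipped with its canonical manifold Fr\"olicher structure (functions $C^\infty(M, \R)$ and smooth paths), this reduces to showing that for every $g \in C^\infty(M, \R)$ and every $c \in \mcc_I$ the composition $g \circ c$ is smooth $\R \to \R$. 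Since $c \in \mcc_I$ satisfies $\F_I \circ c \subset C^\infty(\R, \R)$ by the construction of $\mcc_I$ in Lemma \ref{cov}, it is sufficient to prove $g \in \F_I$, and for this it is even sufficient to show $g \in \F_{I,0}$.

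To establish $g \in \F_{I,0}$, I would first note that $g$ is continuous because it is smooth on the manifold $M$. Next, for each index $i \in I$, one must verify that $g \circ \tau_i \circ c' \in C^\infty(\R, \R)$ for every $c' \in \mcc_{\Delta_n}$. Because $\tau_i : \Delta_n \to M$ is by hypothesis a smooth embedding (condition (1) of Definition \ref{smtriang}), in particular a smooth map from $\Delta_n$ equipped with its subset Fr\"olicher structure into $M$, the composition $\tau_i \circ c' : \R \to M$ is a smooth path in $M$; composing with $g \in C^\infty(M, \R)$ then produces an element of $C^\infty(\R, \R)$. Hence $g \in \F_{I,0} \subset \F_I$, which closes the argument.

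There is essentially no deep obstacle here: the statement is tautological given the gluing construction of $\F_I$ from the smooth maps $\tau_i$. The only mild subtlety to track is that smoothness of $\tau_i$ as an embedding in the sense of Definition \ref{smtriang} (where $\Delta_n$ carries its subset diffeology from $\R^{n+1}$, reflexive by Boman's theorem) matches smoothness of $\tau_i$ at the level of contours, so that $\mcc_{\Delta_n}$ is pushed forward into smooth paths of $M$. Once this identification is explicit, no further computation is required and the theorem follows.
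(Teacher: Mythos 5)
Your proof is correct and follows exactly the route the paper intends: the paper gives no explicit argument beyond remarking that the theorem ``is obtained from the construction of $\F$ and $\mcc$,'' and your unwinding --- showing $C^\infty(M,\R)\subset \F_{I,0}\subset \F_I$ because each $\tau_i$ is smooth from $(\Delta_n,\F_{\Delta_n},\C_{\Delta_n})$ into $M$, and then reading off smoothness of the inclusion from $\F_I\circ\mcc_I\subset C^\infty(\R,\R)$ --- is precisely that construction made explicit. No gaps.
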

	
		
	
	\begin{rem} Maps in $\F_I$ can be intuitively identified as some piecewise smooth maps $M \rightarrow \R,$ which are of class $C^0$ along the 1-skeleton of the triangulation.
		We have proved also that $\mcc_I \subset \p_\infty(M).$ Some characteristic elements of $\mcc_I$ can be understood as paths which are smooth (in the classical sense) on the interiors of the domains of the simplexes of the triangulation, and that fulfill some more restrictive conditions while crossing the 1-skeleton of the triangulation. For example, paths that are (locally) stationnary at the 1-skeleton are in $\mcc_I.$
	\end{rem}
	
	\begin{rem}
		While trying to define a Fr\"olicher structure from a triangulation, one could also consider $$\C_{I,0} = \left\{ \gamma \in C^{0}(\R,M)\, | \, \forall i \in I, \forall f \in C^\infty_c(\phi_i(\Delta_n),\R), f \circ \gamma \in C^\infty(\R,\R) \right\}$$ where $C^\infty_c(\phi_i(\Delta_n),\R)$ stands for compactly supported smooth functions $M \rightarrow \R$ with support in $\phi_i(\Delta_n).$ Then define $$\F_I' = \left\{f : M \rightarrow \R \, | \, f \circ \C_{I,0} \in C^\infty(\R,\R)\right\}$$
		and $$\C_I' = \left\{C : \R \rightarrow M \, | \, \F_I' \circ c \in C^\infty(\R,\R)\right\}.$$
		We get here another construction, but which does not understand as smooth maps $M \rightarrow \R$ the maps $\delta_k$ already mentionned.
	\end{rem}
	Now, let us fix the set of indexes $I$ and fix a so-called \textbf{model triangulation} $\tau.$ {{} This terminology is justified by two ideas: 
		\begin{itemize} 
			\item Anticipating next constructions, this model triangulation $\tau$ will serve at defining a sequence of refined trinagulations. This is our ``starting triangulation'' for the refinement procedure in the finite elements method. 
			\item Changing $\tau$ into $g \circ \tau,$ where $g$ is a diffeomorphism, we get another model triangulation, which has merely the same properties as $\tau.$ But each ``starting'' trinagulation cannot be obtained by transforming a fixed triangulation by using a diffeomorphism. For example, on the 2-sphere, a tetrahedral triangulation $\tau_1$ and an octahedral triangulation $\tau_2$ separately generate two sequences of refined triangulations, and there is a topological obstruction for changing $\tau_1$ into $\tau_2$ by the action of a diffeomorphism of the sphere.     
	\end{itemize} } We {{} denote} by $\mathcal{T}_\tau$ the set of triangulations $\tau'$ of $M$ such that the corresponding 1-skeletons are diffeomorphic to the 1-skeleton of $\tau$ (in the Fr\"olicher category). {{} The set $\mathcal{T}_\tau$ contains, but is not reduced to, the orbit of $\tau$ by the action of the group of diffeomorphisms. Indeed, one can reparametrize each simplex with adequate compatibility on the border. Intuitively speaking, reparametrizations need not to be smooth in the usual sense while ``crossing the border of a simplex''. This choice is motivated by the Fr\"olicher structure that we identify as useful for the finie elements method, ddefined hereafter.}

	\begin{Definition} 
		Since $\mathcal{T}_\tau \subset C^\infty(\Delta_n, M)^I,$ we can equip  $\mathcal{T}_\tau$ with the subset Fr\"olicher structure, in other words, the Fr\"olicher structure on $\mathcal{T}_\tau$ whose generating family of contours $\mcc$ are the contours in $C^\infty(\Delta_n, M)^I$ which lie in $\mathcal{T}_\tau.$
	\end{Definition} 
	We define the full space of triangulations $\mathcal{T}$ as the disjoint union of the spaces of the type  $\mathcal{T}_\tau,$ with disjoint union Fr\"olicher structure. With this notation, in {{}the} sequel and when it carries no ambiguity, the triangulations in $\mathcal{T}_\tau$ is equipped with a fixed set of indexes $I$ (which is impossible to fix for $\mathcal{T}$). We need now to describe the procedure which intends to refine the triangulation and define a sequence of triangulations $(\tau_n)_{n \in \N}.$ 
	We can now consider the refinement operator, which is the operator which divides a simplex $\Delta_n$ into a triangulation. 
	
	\begin{Definition}
		Let $m \in \N,$ with $m \geq 3.$ Let 
		$$\mu = \left\{\mu_i: \Delta_n \rightarrow \Delta_n \, | \, i \in \N_m\ \right\}$$ be a smooth triangulation of $\Delta_n$ 
		Let $\tau \in \mathcal{T}.$ Then we define {{}$$\mu(\tau) = \{f_i \circ \mu_i \, | \,  i \in \N_m \hbox{ and } \tau = (f_i)_{i \in I}\}.$$ We say that $\mu$ defines a \textbf{refinement map} if $\forall n \in \N^*, \mu^n(\tau)$ is a triangulation. }
	\end{Definition}
	
	With this definition, $\mu(\tau)$ is trivially a triangulation of $M$ if $\tau$ is a triangulation of $M.$
	The conditions imposed in the definition ensures that the refinement map maps {{}a} triangulation to another {{}triangulation}, that is, if $\tau$ is a triangulation, $\mu(\tau)$ is {{}also} a {{}triangulation}. The delicate needed condition is that the new 0-vertices added to $tau$ in $\mu(\tau)$ are matching. 

	
	\begin{Definition}
		Let $\tau \in \mathcal{T}.$ We define the $\mu-$refined sequence of triangulations $\mu^\N(\tau) = (\tau_n)_{n \in \N}$ by $$ \left\{ \begin{array}{ccl} \tau_0 & = & \tau \\ \tau_{n+1} & = & \mu(\tau_n) \end{array} \right.$$ 
	\end{Definition}
	
	\begin{Proposition} \label{seqref} 
		The map $$\mu^\N : \mathcal{T} \rightarrow \mathcal{T}^\N$$ is smooth (with $\mathcal{T}^\N$ equipped with the infinite product Fr\"olicher structure).
	\end{Proposition}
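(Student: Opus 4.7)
The plan is to reduce smoothness of $\mu^\N$ to the smoothness of $\mu$ by combining the universal property of the (infinite) product Fr\"olicher structure with an induction. First I would invoke the infinite-product analogue of Proposition \ref{prod2}: a map with target $\mathcal{T}^\N$ is smooth if and only if each of its coordinate projections $\pi_n : \mathcal{T}^\N \to \mathcal{T}$ composed with it is smooth. Since $\pi_n \circ \mu^\N = \mu^n$, it suffices to show that for every $n \in \N$ the iterate $\mu^n : \mathcal{T} \to \mathcal{T}$ is smooth; by induction and the fact that composition of smooth maps is smooth (stated just after Proposition \ref{cvar}), this reduces to proving smoothness of $\mu : \mathcal{T} \to \mathcal{T}$ itself.

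Next I would localize the problem to a single connected component $\mathcal{T}_\tau$ of the disjoint union $\mathcal{T}$. By definition $\mathcal{T}_\tau$ carries the subset Fr\"olicher structure inherited from the product $C^\infty(\Delta_n,M)^I$, so testing smoothness of a map on $\mathcal{T}_\tau$ amounts to testing it on parametrizations $(f_{i,s})_{i \in I}$ where each $s \mapsto f_{i,s}$ is smooth into $C^\infty(\Delta_n,M)$ with image in $\mathcal{T}_\tau$. Under $\mu$, this family is sent to
\[
(f_{i,s} \circ \mu_j)_{(i,j)\in I \times \N_m},
\]
that is, each component is the right-composition of a smooth family $f_{i,s}$ with the fixed smooth map $\mu_j : \Delta_n \to \Delta_n$. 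Right-composition with a fixed smooth map is smooth for the functional diffeology (Proposition \ref{cvar} applied with the target $C^\infty(Y,Z)$ and a constant plot in one variable), so each component map is smooth. The product (indexed by $I \times \N_m$) of such smooth component maps is smooth for the product Fr\"olicher structure on $C^\infty(\Delta_n,M)^{I \times \N_m}$, by Proposition \ref{prod2} extended to infinite products.

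The main technical point, and the place I expect to spend the most care, is to justify that $\mu$ really does land in a single component $\mathcal{T}_{\tau'}$ of $\mathcal{T}$, so that the subset Fr\"olicher structure on the image matches the ambient one on $C^\infty(\Delta_n,M)^{I\times \N_m}$. This uses exactly the hypothesis that $\mu$ is a refinement map: the $1$-skeleton of $\mu(\tau)$ is combinatorially determined by that of $\tau$ together with the fixed combinatorics of $\mu_1,\dots,\mu_m$, so $\mu(\mathcal{T}_\tau) \subset \mathcal{T}_{\mu(\tau_0)}$ for any chosen representative $\tau_0$, and a smooth path in $\mathcal{T}_\tau$ is sent to a smooth path in $\mathcal{T}_{\mu(\tau_0)}$. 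Once this is in place, composing inductively gives smoothness of $\mu^n$ for each $n$, and the product characterization concludes smoothness of $\mu^\N$.
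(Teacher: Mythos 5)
Your argument is correct and is the natural one; note that the paper itself states Proposition \ref{seqref} without proof (the construction is imported from \cite{Ma2016-2}), so there is no in-text argument to compare against. Your three reductions are all sound for Fr\"olicher structures: smoothness into a product is detected by the coordinate projections because the function algebra of the product is generated by functions pulled back from the factors (and it suffices to test smoothness against a generating family of functions); composition of smooth maps is smooth, giving each iterate $\mu^n$ from $\mu$; and $\mu$ restricted to a component is the product over $(i,j)\in I\times\N_m$ of the right-composition operators $f\mapsto f\circ\mu_j$, which are smooth for the functional diffeology. You are also right that the only delicate point is that $\mu$ must send each component $\mathcal{T}_\tau$ into a single component of $\mathcal{T}$: this is genuinely needed, since the disjoint-union structure carries locally constant functions separating components, so a contour whose image under $\mu$ met two components could not be smooth. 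Your justification of this step is only a sketch --- ``combinatorially determined'' is weaker than the definition of $\mathcal{T}_{\tau'}$, which requires the $1$-skeletons to be \emph{diffeomorphic} in the Fr\"olicher category, so one should actually exhibit a diffeomorphism of the refined $1$-skeletons (edge by edge, using that the new edges are images of the fixed arcs of $\mu$ under the smooth embeddings $f_i$, respectively $f_i'$). With that point made explicit the proof is complete.
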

%
%
	
	Let $\Omega$ be a bounded connected open subset of $\R^n,$ and assume that the border $\partial \Omega = \bar{\Omega} - \Omega$ is a polygonal curve. Since $\R^n$ is a vector space, we can consider the space of affine triangulations: 
	$$\operatorname{Aff}\mathcal{T}_\tau = \left\{ \tau' \in \mathcal{T}_\tau | \forall i , \tau_i' \hbox{ is (the restriction to } \Delta_n \hbox{ of) an affine map } \right\}.$$
	We define $\operatorname{Aff}\mathcal{T}$ from $\operatorname{Aff}\mathcal{T}_\tau$ the same way we defined $\mathcal{T}$ from $\mathcal{T}_\tau,$ via disjoint union.
	We equip $\operatorname{Aff}(\mathcal{T}_\tau)$ {{}and} $\operatorname{Aff}(\mathcal{T})$ with their subset diffeology. We use here the notations of last Lemma.
	\begin{Theorem}
		Let $$c : \R \rightarrow \operatorname{Aff}(\mathcal{T}_\tau)$$ be a path on $\operatorname{Aff}(\mathcal{T}_\tau).$ Then $$ c \hbox{ is smooth } \Leftrightarrow \forall (i,j) \in I \times \N_{n+1}, t \mapsto x_j(c(t)_i) \hbox{ is smooth. }$$
	\end{Theorem}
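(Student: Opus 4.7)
My plan is to unravel the subset diffeology on $\operatorname{Aff}(\mathcal{T}_\tau)$ through the chain
$$\operatorname{Aff}(\mathcal{T}_\tau)\subset \mathcal{T}_\tau \subset C^\infty(\Delta_n,\R^n)^I,$$
and reduce smoothness of $c$ to smoothness of evaluation maps. Combining the Cartesian closedness of Proposition \ref{cvar} with the product diffeology of Proposition \ref{prod1}, a path $c:\R\to C^\infty(\Delta_n,\R^n)^I$ is smooth if and only if, for every $i\in I$, the adjoint map
$$\Phi_i:\R\times \Delta_n \to \R^n,\qquad (t,b)\mapsto c(t)_i(b),$$
is smooth in the ordinary sense, where $\Delta_n$ carries its reflexive subset diffeology inherited from $\R^{n+1}$.

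For the forward direction, I would fix $i\in I$ and $j\in\N_{n+1}$ and restrict $\Phi_i$ along the constant plot $t\mapsto e_j$ giving the $j$-th vertex of $\Delta_n$. Composition of smooth maps being smooth, this yields that $t\mapsto \Phi_i(t,e_j)=x_j(c(t)_i)$ is smooth, which is exactly the claim.

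For the converse, I would use the characteristic property of affine maps on a simplex: writing a point $b=(b_0,\dots,b_n)\in\Delta_n$ in its barycentric coordinates as in (\ref{embtriangulation}) and setting $v_{i,j}(t):=x_j(c(t)_i)$, the affine map $c(t)_i$ is necessarily
$$c(t)_i(b)=\sum_{j=0}^{n} b_j\, v_{i,j}(t).$$
Hence $\Phi_i(t,b)=\sum_j b_j\, v_{i,j}(t)$ is a bilinear expression in $(b,v_{i,\cdot}(t))$ whose coefficients are smooth by hypothesis, so $\Phi_i$ is jointly smooth on $\R\times\Delta_n$. By the preliminary reduction, $c$ is a plot of $C^\infty(\Delta_n,\R^n)^I$, and since by assumption its image lies in $\operatorname{Aff}(\mathcal{T}_\tau)$, it is automatically a plot of the subset diffeology.

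The step I expect to be the most delicate is the reduction to ordinary smoothness of $\Phi_i$: one must confirm that the functional diffeology on $C^\infty(\Delta_n,\R^n)$, the product diffeology on $C^\infty(\Delta_n,\R^n)^I$, and the subset diffeologies successively imposed on $\mathcal{T}_\tau$ and $\operatorname{Aff}(\mathcal{T}_\tau)$ all cooperate so that testing smoothness against ``affine $b$-coordinates plus smooth $t$'' suffices. The reflexivity of the diffeology of $\Delta_n$ (via Boman's theorem, as recalled before Lemma \ref{cov}) is what makes the final identification of diffeological smoothness with classical smoothness on $\R\times\Delta_n$ unambiguous.
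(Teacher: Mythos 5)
Your proof is correct, and since the paper states this theorem without giving a proof (it is imported from the cited work on spaces of triangulations), yours is precisely the argument that the surrounding definitions call for: Cartesian closedness plus the product diffeology reduce smoothness of $c$ to joint smoothness of the adjoints $\Phi_i$, evaluation at the vertices gives one direction, and barycentric reconstruction of an affine simplex from its vertex images gives the other. The only point worth tightening is the forward direction: the subset Fr\"olicher structure on $\operatorname{Aff}(\mathcal{T}_\tau)$ is \emph{generated} by the ambient contours lying in it, so a smooth path $c$ is not by definition such an ambient contour and the adjoint $\Phi_i$ is not immediately available for it; instead, observe that each vertex-coordinate function $\tau'\mapsto\langle \tau'_i(e_j),e_k\rangle$ is smooth along every generating contour (by exactly your evaluation-at-$e_j$ argument) and hence lies in the function algebra $\F$ of the generated structure, so that its composition with any smooth path is smooth. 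Your converse direction exhibits $c$ as a generating contour outright and needs no modification.
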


	\begin{Proposition}
		Let {{}$\mu$} be a fixed affine triangulation of $\Delta_n.$
		The map $\mu^\N$ restricts to a smooth map from the set of affine triangulations of $\Omega$ to se set of sequences of affine triangulations of $\Omega.$
	\end{Proposition}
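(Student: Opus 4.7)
The approach I would take is to deduce this statement directly from Proposition \ref{seqref}, using only the compatibility between subset Fr\"olicher structures and smooth maps. No genuinely new analytic work is required; the task decomposes into (i) verifying that $\mu^\N$ sends $\operatorname{Aff}\mathcal{T}$ into $(\operatorname{Aff}\mathcal{T})^\N$ set-theoretically, and (ii) promoting smoothness from $\mathcal{T}$ to the affine subspace equipped with its subset structure.

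First I would check the image restriction. Fix $\tau=(f_i)_{i\in I}\in\operatorname{Aff}\mathcal{T}$ and write the prescribed affine triangulation of $\Delta_n$ as $\mu=(\mu_j)_{j\in\N_m}$. Unfolding the definition $\mu(\tau)=\{f_i\circ\mu_j\}$, each factor is affine as the composition of two affine maps; hence $\mu(\tau)\in\operatorname{Aff}\mathcal{T}$. A straightforward induction on $k\in\N$ then yields $\mu^k(\tau)\in\operatorname{Aff}\mathcal{T}$ for all $k$, so that $\mu^\N$ genuinely restricts to a set-theoretic map $\operatorname{Aff}\mathcal{T}\to(\operatorname{Aff}\mathcal{T})^\N$.

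Next I would upgrade this to smoothness. Since $\operatorname{Aff}\mathcal{T}$ carries by definition the subset Fr\"olicher structure inherited from $\mathcal{T}$, the canonical inclusion $\iota\colon\operatorname{Aff}\mathcal{T}\hookrightarrow\mathcal{T}$ is smooth. Composing with the smooth map $\mu^\N\colon\mathcal{T}\to\mathcal{T}^\N$ furnished by Proposition \ref{seqref} yields a smooth map $\mu^\N\circ\iota\colon\operatorname{Aff}\mathcal{T}\to\mathcal{T}^\N$ whose image, by step (i), lies in $(\operatorname{Aff}\mathcal{T})^\N$. It remains to argue that the co-restriction to this target is smooth, for which one observes that the Fr\"olicher structure on $(\operatorname{Aff}\mathcal{T})^\N$ as a subspace of $\mathcal{T}^\N$ coincides with the infinite product of the subset Fr\"olicher structures on the factors, the usual contour-based description (Propositions \ref{prod1} and \ref{prod2}, extended to infinite products as noted after Proposition \ref{prod2}) applying termwise.

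The main obstacle is not analytic but organizational: one must check that subset and infinite-product Fr\"olicher structures commute on nested spaces of mappings, a functoriality statement easy to gloss over. This verification is short and follows from the generating families of contours, combined with the explicit characterization of smooth paths in $\operatorname{Aff}\mathcal{T}_\tau$ stated just before the proposition in terms of the coordinate functions $t\mapsto x_j(c(t)_i)$; these coordinates are preserved by the composition with the fixed affine $\mu_j$ up to affine recombination, so smoothness in the coordinate sense is transported termwise through $\mu^\N$.
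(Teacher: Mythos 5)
Your argument is correct and is essentially the one the paper intends: the proposition is stated there without a written proof, being left as an immediate consequence of Proposition \ref{seqref} together with the observation that a composition of affine maps is affine. Your two steps --- the set-theoretic restriction of the image via stability of affineness under composition with the fixed affine $\mu_j$, and the promotion of smoothness through the subset and infinite-product Fr\"olicher structures --- supply exactly the details the paper omits, and the compatibility of subset with infinite-product structures that you flag is indeed the only point requiring any care.
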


\subsection{Diffeologies and implicit functions in the ILB setting}
We set the following notations, 
from the standard reference \cite{Om} and along the lines of \cite{Ma2020-1}:
Let  $\hbox{\bf E} = (E_\infty,(E_i)_{i \in \N})$ and  
$\hbox{\bf F} = (F_\infty,(F_i)_{i \in \N})$ be two ILB vector spaces. 
Let $O_0$ be an open neighborhood of $(0{{},}0)$ in $E_0 \times F_0,$ let
$\hbox{\bf O} = (O_i)_{i \in I}$ with $O_i = O_0 \cap ({E_i \times F_i})$, for $i \in \N \cup \{\infty\}.$ 

{{} Let us now propose, { along the lines of \cite{Ma2020-3}}, a diffeological approach to the main result of \cite{Ma2020-1} that we recall here. For this, we consider a function $f_0$ of class $C^\infty$such that
	\begin{enumerate}
		\item $f_0(0; 0) = 0$
		\item $D_2f_0(0; 0) = Id_{F_0}$.
	\end{enumerate} Moreover, let us assume that $f_0$ restricts to $C^\infty-$maps
	$ f_i : U_i \times V_i \rightarrow F_i,$ {and defines an ILB map $$\mathbf{f}:\mathbf{E} \rightarrow \mathbf{F}.$$}
	let $$U_\infty = E_\infty \cap U_0 \quad \hbox{ and } \quad V_\infty = V_0 \cap F_\infty.$$  
	We do not assume here any other assumption, contrasting with e.g. the classical Nash-Moser theorem \cite{Ham1982} where additional norm estimates are necessary. Under our weakened conditions, one can state \cite[Theorem 2.2]{Ma2020-1}:
	\begin{Theorem} \label{1.6} There exists a non-empty domain $D_\infty \subset U_\infty,$ possibly non-open
		in $U_\infty,$ and a function $$u_\infty : D_\infty \rightarrow V_\infty$$ such that
		$$\forall x \in D_\infty, \quad f_\infty(x; u_\infty(x)) = 0.$$
		{{} Moreover, there exists a sequence $(c_i)_{i \in \N} \in (\R_+^*)^\N$ and a Banach space $B_{f_\infty}$ such that
			\begin{itemize}
				\item $B_{f_\infty} \subset E_\infty$ (as a subset)
				\item the canonical inclusion map $B_{f_\infty} \hookrightarrow E_\infty$ is continuous
			\end{itemize} 
			which is the domain of the following norm (and endowed with it): $$||x||_{f_\infty} = \sup
			\left\{ \frac{||x_i||}{c_i}| i \in\N \right\}. $$ Then $D_\infty$ contains $\mathcal{B},$ the unit ball (of radius $1$ centered at 0) of  $B_{f_\infty}.$} \end{Theorem}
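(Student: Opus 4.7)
The plan is to apply the classical Banach implicit function theorem at each level $i \in \N$ and then glue the resulting local implicit functions using the compatibility inherent to the ILB setting. Since $\mathbf{f}$ is an ILB map, for each $i$ we have $f_i \in C^\infty(U_i \times V_i, F_i)$ with $f_i(0,0) = 0$, and the hypothesis $D_2 f_0(0,0) = \mathrm{Id}_{F_0}$ restricts to $D_2 f_i(0,0) = \mathrm{Id}_{F_i}$, which is invertible in the Banach algebra $L(F_i)$. The classical Banach implicit function theorem therefore yields, for each $i$, radii $r_i > 0$ and $\rho_i > 0$ and a unique smooth map $u_i : B_{E_i}(0, r_i) \to B_{F_i}(0, \rho_i)$ with $u_i(0) = 0$ and $f_i(x, u_i(x)) = 0$ on this ball.

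After possibly shrinking each $r_{i+1}$ so that $r_{i+1} \le r_i$ and $u_{i+1}(B_{E_{i+1}}(0, r_{i+1})) \subset B_{F_i}(0, \rho_i)$, the local uniqueness clause forces $u_i = u_{i+1}$ on their common domain of definition. Setting
\[ D_\infty := E_\infty \cap \bigcap_{i \in \N} B_{E_i}(0, r_i) \]
and $u_\infty(x) := u_i(x)$ for any $i$ with $x \in B_{E_i}(0, r_i)$ then produces a well-defined map $u_\infty : D_\infty \to F_\infty$, the common value lying in $\bigcap_i F_i = F_\infty$. Note that $D_\infty$ is typically not open in the Fr\'echet topology of $E_\infty$, because open neighborhoods of the origin in the projective limit topology are determined by only finitely many of the semi-norms $\|\cdot\|_{E_i}$, whereas $D_\infty$ is cut out by infinitely many radius conditions.

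For the quantitative statement, choose any strictly positive sequence $(c_i)_{i \in \N}$ with $c_i < r_i$ for every $i$ and set
\[ B_{f_\infty} := \left\{ x \in E_\infty \,\Big|\, \|x\|_{f_\infty} := \sup_{i \in \N} \frac{\|x_i\|_{E_i}}{c_i} < +\infty \right\}. \]
A Cauchy sequence in $(B_{f_\infty}, \|\cdot\|_{f_\infty})$ is Cauchy in each $(E_i, \|\cdot\|_{E_i})$ via the estimate $\|x\|_{E_i} \le c_i \|x\|_{f_\infty}$, hence converges in each Banach factor; the limits are automatically compatible under the inclusions and assemble into an element of $E_\infty$, while the sup-norm passes to the limit, giving completeness. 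The same estimate makes the inclusion $B_{f_\infty} \hookrightarrow E_i$ bounded for every $i$, hence continuous into $E_\infty$ for the projective limit topology. Finally, $x$ in the unit ball $\mathcal{B}$ of $B_{f_\infty}$ means $\|x_i\|_{E_i} \le c_i < r_i$ for every $i$, so $\mathcal{B} \subset D_\infty$ as required.

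The main obstacle I anticipate is conceptual rather than computational: one must accept that the natural domain $D_\infty$ is generally \emph{not} open in $E_\infty$, so the theorem delivers a solution function on a genuinely non-manifold-like set of parameters. The introduction of the auxiliary Banach norm $\|\cdot\|_{f_\infty}$ is precisely the device that restores a non-trivial open set (in its own topology) on which $u_\infty$ is defined, circumventing the tame-estimate machinery of the Nash--Moser framework \cite{Ham1982}. Smoothness of $u_\infty$ in the diffeological sense on $D_\infty$, equipped with the subset diffeology inherited from $E_\infty$, then follows automatically from the smoothness of each $u_i$ at the Banach level.
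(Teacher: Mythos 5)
Your argument is correct, and it is essentially the proof of the cited source: the paper itself does not reprove Theorem \ref{1.6} but imports it from \cite{Ma2020-1}, where the strategy is exactly the one you follow --- a levelwise application of the Banach implicit function theorem, identification of the resulting implicit functions $u_i$ on nested balls via the local uniqueness clause, and the packaging of the radii $r_i$ into the weighted sup-norm space $B_{f_\infty}$. Two caveats. First, your nested-ball and gluing steps silently use the normalization $\|x\|_{E_i}\le\|x\|_{E_{i+1}}$; this is indeed guaranteed by Definition \ref{d:ilb} (the closed unit ball of $E_{i+1}$ sits inside that of $E_i$), but it should be said. Note also that nothing prevents $r_i\to 0$, in which case $B_{f_\infty}=\{0\}$ and the quantitative conclusion, while still true, is vacuous --- that is the price of dropping the Nash--Moser tame estimates, and the theorem as stated tolerates it. Second, your closing claim that diffeological smoothness of $u_\infty$ on all of $D_\infty$ ``follows automatically'' overstates what is known and contradicts the paper's own account: the text explicitly records that regularity was left open in \cite{Ma2020-1} and is established in \cite{Ma2020-3} (Theorem \ref{IFTh}) only on a possibly smaller domain $D$ with $\mathcal{B}\subset D\subset D_\infty$. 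Since that claim is not part of the statement being proved, it does not affect the validity of your proof, but you should not present it as an automatic corollary.
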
   
	In \cite{Ma2020-1}, the question of the regularity of the implicit function is left open, because the domain $D_\infty$ is not a priori open in $O_\infty.$ 
	This lack of regularity induces a critical breakdown in generalizing the classical proof of the Frobenius theorem to this setting. { This gap is filled in \cite{Ma2020-3} mostly by the following result:  }
	\begin{Theorem} \label{IFTh}
		{{}Let $$f_i: O_i \rightarrow F_i, \quad i \in \N \cup \{\infty \}$$ be a family of maps, 
			let $u_\infty$ the implicit function defined on the domain $D_\infty$,  as in Theroem \ref{1.6}.
			Then, there exists a domain $D$ such that $\mathcal{B} \subset D \subset D_\infty$ such that the function $u_\infty$  is smooth for the subset diffeology of $D.$}
	\end{Theorem}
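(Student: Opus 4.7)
My approach is to reduce diffeological smoothness on $D$ to ordinary Banach-level smoothness, exploiting the fact that the subset diffeology tests smoothness only along plots $p : U \to E_\infty$ landing in $D$, and that a map into $F_\infty$ is smooth in the ILB sense precisely when its compositions $U \to F_i$ are smooth for every $i$. Accordingly, I do not need a single classical implicit function on a common open neighborhood of $0$; it suffices to have a family of local implicit functions $u_i^{x_0}$ at level $i$, one near each point $x_0$ in the image of a plot, that patches with $u_\infty$ by uniqueness.

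Concretely, I would take
\begin{equation*}
D \;=\; \bigl\{\, x \in D_\infty \;\bigm|\; D_2 f_i(x, u_\infty(x)) \in GL(F_i) \text{ for every } i \in \N \,\bigr\}.
\end{equation*}
The inclusion $\mathcal{B} \subset D$ is the main technical hurdle: for $x$ in the unit ball of $B_{f_\infty}$, one must show that $D_2 f_i(x, u_\infty(x))$ is uniformly close to $Id_{F_i}$ in the operator norm of $F_i$, so that invertibility follows from a Neumann-series argument. This reduces to quantitative continuity estimates on the $D_2 f_i$ that are already implicit in the construction of the weights $c_i$ in the proof of Theorem \ref{1.6}; I expect no new analytic ideas are needed beyond tightening those estimates, possibly after rescaling the $c_i$ (which leaves the statement $\mathcal{B} \subset D_\infty$ intact).

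Granted this, the remainder of the argument is routine. For $x_0 \in D$ the classical Banach implicit function theorem applied at $(x_0, u_\infty(x_0))$ in $E_i \times F_i$ furnishes an open neighborhood $W_i^{x_0} \subset E_i$ of $x_0$ and a smooth map $u_i^{x_0} : W_i^{x_0} \to F_i$ with $f_i(x, u_i^{x_0}(x)) = 0$ and $u_i^{x_0}(x_0) = u_\infty(x_0)$; local uniqueness forces $u_i^{x_0} = u_\infty$ on $W_i^{x_0} \cap D_\infty$, possibly after shrinking $W_i^{x_0}$. Given any plot $p : U \to D$ of the subset diffeology, smoothness of $p$ into $E_\infty$ entails continuity of $p$ into each $E_i$. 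Thus at every $u_0 \in U$ and every $i$ there is an open neighborhood $U_0^{i} \subset U$ of $u_0$ with $p(U_0^i) \subset W_i^{p(u_0)}$, and on $U_0^i$ the identity $u_\infty \circ p = u_i^{p(u_0)} \circ p$ combined with the classical smoothness of $u_i^{p(u_0)}$ yields smoothness of $u_\infty \circ p$ into $F_i$ near $u_0$.

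Letting $u_0$ and $i$ vary, $u_\infty \circ p$ is smooth as a map $U \to F_i$ for every $i$, hence smooth into $F_\infty$ for the ILB structure. Since $p$ was an arbitrary plot of the subset diffeology of $D$, this proves $u_\infty : D \to F_\infty$ is smooth in that diffeology. The delicate point is exactly Step~1, namely establishing $\mathcal{B} \subset D$; beyond it, the diffeological framework performs precisely its intended role, absorbing the absence of a common open neighborhood on which the Banach implicit function theorem would have applied uniformly in~$i$.
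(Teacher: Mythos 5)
The paper itself gives no proof of Theorem \ref{IFTh}: it is quoted from \cite{Ma2020-3}, where the smoothness of $u_\infty$ is extracted from the construction in Theorem \ref{1.6} (the behaviour of the approximating sequence for $u_\infty$ along plots) rather than from pointwise applications of the Banach implicit function theorem, so your route is at least organised differently. Your reduction of diffeological smoothness on $D$ to levelwise Banach smoothness along plots is the right skeleton, but as written the argument has a genuine gap precisely at the point you flag as the ``main technical hurdle''. The inclusion $\mathcal{B}\subset D$, i.e.\ the invertibility of $D_2f_i(x,u_\infty(x))$ on $F_i$ for every $x$ in the unit ball of $B_{f_\infty}$ and every $i$, \emph{is} the analytic content of the theorem: the whole reason $D_\infty$ fails to be open and the diffeological language is invoked is that no uniform-in-$i$ control is available away from the origin. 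You do not prove this inclusion; you only express the expectation that it follows from the weights $c_i$ of Theorem \ref{1.6}. Those weights are chosen in \cite{Ma2020-1} to make a fixed-point iteration converge at every level, which is not literally a Neumann-series bound $\Vert Id_{F_i}-D_2f_i(x,u_\infty(x))\Vert_{L(F_i)}<1$ along the graph of $u_\infty$; until that deduction is actually carried out, the proof is incomplete at its central step.

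There is a second, more local, problem: the patching claim that ``local uniqueness forces $u_i^{x_0}=u_\infty$ on $W_i^{x_0}\cap D_\infty$'' is circular as stated. The uniqueness clause of the Banach implicit function theorem at level $i$ identifies $u_\infty(x)$ with $u_i^{x_0}(x)$ only when $u_\infty(x)$ lies in the $F_i$-neighbourhood of $u_\infty(x_0)$ on which uniqueness holds, and guaranteeing this requires continuity of $u_\infty$ (or of $u_\infty\circ p$) into $F_i$ --- which is part of what you are trying to prove; shrinking $W_i^{x_0}$ controls $x$, not $u_\infty(x)$. This can be repaired by routing the uniqueness through level $0$: $u_\infty$ is the restriction of the level-$0$ implicit function, the graph of $u_i^{x_0}$ lies in the zero set of $f_0$, and $u_i^{x_0}$ is continuous into $F_i\hookrightarrow F_0$, so level-$0$ uniqueness identifies the two after shrinking. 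With that detour made explicit, and with the inclusion $\mathcal{B}\subset D$ actually established from the estimates of \cite{Ma2020-1}, the rest of your argument (plots into $D$ are continuous into each $E_i$, composition with the local Banach implicit functions gives smoothness into each $F_i$, hence into $F_\infty$) goes through.
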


	The same way, we can state the corresponding Frobenius theorem {from the same reference:}
	
	\begin{Theorem}\label{lFrob}
		
		Let 
		$$ f_i : O_i \rightarrow L(E_i,F_i), \quad i \in {{}\N}$$ 
		be a collection of smooth maps satisfying the following condition: 
		$$ i > j \Rightarrow f_j|_{O_i} = f_i$$ and such that, $$\forall (x,y) \in O_i, \forall a,b \in E_i$$
		$$(D_1f_i(x,y)(a)(b) + (D_2f_i(x,y))(f_i(x,y)(a))(b) =$$
		$$(D_1f_i(x,y)(b)(a) + (D_2f_i(x,y))(f_i(x,y)(b))(a) .$$

		Then,
		$\forall (x_0, y_0) \in O_{\infty}$, there exists a diffeological subspace  $ D $ of $O_\infty$ that contains $(x_0, y_0)$ and a smooth map
		$J : D \rightarrow  F_\infty$
		such that
		$$ \forall (x,y) \in D, \quad D_1J(x,y) = f_i(x, J(x,y)) $$
		and, if {{}$D_{x_0}$ is the connected component of $(x_0,y_0)$ in $\{(x,y) \in D \, | \, x = x_0  \},$ }
		$$J_i(x_0,.) = Id_{D_{x_0}}.
		$$ 
		{{} Moreover, there exists a sequence $(c_i)_{i \in \N} \in (\R_+^*)^\N$ and a Banach space $B_{f_\infty}$ such that
			\begin{itemize}
				\item $B_{f_\infty} \subset E_\infty\times F_\infty$ (as a subset)
				\item the canonical inclusion map $B_{f_\infty} \hookrightarrow E_\infty\times F_\infty$ is continuous
			\end{itemize} 
			which is the domain of the following norm (and endowed with it): $$||x||_{f_\infty} = \sup
			\left\{ \frac{||x||_{E_i \times F_i}}{c_i}| i \in\N \right\}. $$ Then $D_\infty$ contains $\mathcal{B},$ the unit ball (of radius $1$ centered at 0) of  $B_{f_\infty}.$}
		
	\end{Theorem}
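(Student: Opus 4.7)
My plan is to reduce the Frobenius theorem to the implicit function theorem (Theorem \ref{IFTh}) through a well-chosen auxiliary map. First I would convert the PDE $D_1 J(x,y) = f_i(x, J(x,y))$ into an integral equation along rays from the base point: for $(x,y) \in O_\infty$ near $(x_0, y_0)$, a candidate solution $J(x,y)$ should satisfy
\begin{equation*}
J(x,y) = y + \int_0^1 f_\infty\bigl(x_0 + t(x-x_0),\, J(x_0 + t(x-x_0), y)\bigr)(x - x_0)\, dt .
\end{equation*}
This suggests defining, on a suitable space of curves $c \in C^\infty([0,1], O_\infty)$ (or sections parametrized by $(x,y)$), the auxiliary map
\begin{equation*}
\Phi_i(x, y, J)(t) = J(t) - y - \int_0^t f_i\bigl(x_0 + s(x-x_0),\, J(s)\bigr)(x - x_0)\, ds
\end{equation*}
with values in a space of $F_i$-valued curves. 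The derivative $D_3 \Phi_i$ at $(x_0, y_0, t \mapsto y_0)$ equals the identity (by direct computation, since the integral term differentiates to a Volterra operator vanishing at $x = x_0$), placing us in the setting required by Theorem \ref{IFTh}.

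The next step is to invoke Theorem \ref{IFTh} on $\Phi_\infty$. This produces a diffeological subspace $D$ containing the unit ball of an appropriate Banach space $B_{f_\infty} \subset E_\infty \times F_\infty$, and a smooth solution map assigning to each $(x,y) \in D$ the curve $J(\cdot; x, y)$; setting $J(x,y) := J(1; x, y)$ gives a candidate. The construction automatically yields $J(x_0, y) = y$ on the connected component $D_{x_0}$, and differentiating the integral equation along rays from $x_0$ yields $D_1 J(x,y)(x-x_0) = f_\infty(x, J(x,y))(x - x_0)$, i.e.\ the Frobenius equation restricted to radial directions.

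The crux, and the main obstacle, is promoting this radial identity to the full equation $D_1 J(x,y)(a) = f_\infty(x, J(x,y))(a)$ for every direction $a \in E_\infty$, and it is exactly here that the symmetry hypothesis enters. The standard argument is to fix $(x,y) \in D$, form the two-parameter family $c(s,t) = x_0 + s(x - x_0) + st\, a$ and compare the $\partial_s \partial_t$ and $\partial_t \partial_s$ derivatives of $J(c(s,t), y)$; the equality
\begin{equation*}
(D_1 f_i)(x,y)(a)(b) + (D_2 f_i)(x,y)(f_i(x,y)(a))(b) \;=\; (D_1 f_i)(x,y)(b)(a) + (D_2 f_i)(x,y)(f_i(x,y)(b))(a)
\end{equation*}
is precisely the integrability condition required to identify $D_1 J(x,y)(a)$ with $f_\infty(x,J(x,y))(a)$ by a Grönwall-type uniqueness argument along the curve $t \mapsto c(1,t)$. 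Some care will be needed because $D$ is not open in $O_\infty$, so these comparisons must be carried out within the subset diffeology, using that plots into $D$ pull back via $\Phi_\infty$ to plots into the curve space where all differentiations are classical. Finally, the smoothness of $J$ and the control of its domain $D \supset \mathcal{B}$ follow directly from the corresponding conclusions of Theorem \ref{IFTh}, with the Banach space $B_{f_\infty}$ and weights $(c_i)_{i \in \N}$ inherited from the application of the implicit function theorem to $\Phi$.
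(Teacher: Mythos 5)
The paper does not actually prove Theorem \ref{lFrob}: it is imported verbatim from \cite{Ma2020-3} with the remark ``the same way, we can state the corresponding Frobenius theorem from the same reference,'' so the only thing to compare against is the paper's implied strategy, namely that Frobenius should follow from the diffeological implicit function theorem (Theorem \ref{IFTh}). Your proposal follows exactly that route --- the classical reduction via an integral equation along rays from $(x_0,y_0)$, followed by the Clairaut-type argument using the symmetry hypothesis to upgrade the radial identity to $D_1J(x,y)(a)=f_\infty(x,J(x,y))(a)$ for all $a$ --- so in spirit it matches the intended proof.

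Two points in your sketch are, however, genuine gaps rather than routine verifications. First, to invoke Theorem \ref{IFTh} on your map $\Phi$ you need the target curve spaces to form an ILB chain in the sense of Definition \ref{d:ilb}, which requires the closed unit ball of level $n+1$ to be compact in level $n$; for the naive choice $C^0([0,1],F_{n+1})\subset C^0([0,1],F_n)$ this fails (no equicontinuity), so you must shift differentiability orders, e.g.\ work with $C^{1}([0,1],F_{n+1})\hookrightarrow C^{0}([0,1],F_n)$, and then re-check that $\Phi$ is an ILB map between the shifted chains. You do not address this, and it is not cosmetic: the whole point of Theorem \ref{1.6} is that its conclusions depend on the specific ILB structure through the weights $(c_i)$. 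Second, the step you label ``some care will be needed because $D$ is not open'' is precisely the critical breakdown the paper itself highlights: on a non-open diffeological subspace the expression $D_1J(x,y)(a)$ for arbitrary $a\in E_\infty$ has no a priori meaning, since the segment $x+ta$ need not stay in $D$, and the two-parameter family $c(s,t)=x_0+s(x-x_0)+st\,a$ need not take values in $D$ at all. The actual content of the cited proof is the mechanism by which directional derivatives and the Gr\"onwall comparison are carried out inside the subset diffeology (via plots factoring through the ball $\mathcal{B}$ of $B_{f_\infty}$); your sketch asserts this can be done but does not supply the argument, so as written the central claim of the theorem is not established.
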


\end{document}